\documentclass[reqno]{amsart}

\title[]{Monte Carlo Approximations of Time-Nonlocal Diffusions in Bounded Domains}

\author[I. Biočić]{Ivan Bio\v{c}i\'{c}}
\author[D.E. Cedeño-Girón]{Daniel E. Cedeño-Girón}
\author[A. Mijatović]{Aleksandar Mijatovi\'{c}}
\author[B. Toaldo]{Bruno Toaldo}
\keywords{Monte Carlo methods, time-nonlocal diffusion, anomalous diffusion, killed diffusions, Cauchy--Dirichlet problems}
\date{\today}
\subjclass[2020]{Primary 65M75; Secondary 65C05, 65M15, 35R11, 60K50}

\usepackage{amssymb, amsmath, amsfonts}
\usepackage{natbib}
\usepackage{lscape,comment,upgreek}
\usepackage{color}
\usepackage{mathtools}
\usepackage{dsfont}
\usepackage{mathrsfs}
\usepackage{bm}
\usepackage{graphicx}
\usepackage{wrapfig}
\usepackage{blindtext}
\usepackage{caption}
\usepackage{enumitem}
\usepackage{tikz}
\usetikzlibrary {mindmap}
\usetikzlibrary{arrows,decorations.markings,arrows.meta}
\usetikzlibrary{calc}

\usepackage[top=3cm, bottom=3cm, right=2.8cm,left=2.8cm]{geometry}

\usepackage{cancel}

\usepackage{soul}

\newcommand{\overbar}[1]{\mkern 1.5mu\overline{\mkern-1.5mu#1\mkern-1.5mu}\mkern 1.5mu}
\usepackage[pdftex,plainpages=false,colorlinks,hyperindex,bookmarksopen,linkcolor=red,citecolor=blue,urlcolor=blue]{hyperref}

\numberwithin{equation}{section}

\DeclareMathAlphabet{\mathpzc}{OT1}{pzc}{m}{it}

\DeclareMathOperator\supp{supp}	
\newcommand{\R}{\mathbb{R}}
\newcommand{\N}{\mathbb{N}}
\newcommand{\pr}{\mathds{P}} 
\newcommand{\ex}{\mathds{E}} 
    \newcommand{\var}{\mathds{V}\text{ar}} 

\newcommand{\1}{\mathds 1}

\newcommand{\DD}{\mathcal{D}}

\newcommand{\wt}{\widetilde}
\newcommand{\diam}{\textrm{diam}}
\newcommand{\dist}{\textrm{dist}}

\definecolor{revisiongreen}{rgb}{0,0.5,0}

\bibpunct{[}{]}{;}{n}{,}{,}
\usepackage[linesnumbered]{algorithm2e}
\RestyleAlgo{ruled}

\usepackage{bbm} 
\newtheorem{theorem}{Theorem}[section] 
\newtheorem{lemma}[theorem]{Lemma}

\theoremstyle{remark}
\newtheorem{remark}[theorem]{Remark}
\newtheorem{example}[theorem]{Example}
\numberwithin{equation}{section} 
\usepackage{authblk}

\newtheorem{assumption}{}

\newtheorem{property}{}

\usepackage{caption}
\usepackage{subcaption}
\usepackage{graphicx}
\usepackage[pagewise]{lineno}
\usepackage{float} 

\DeclareMathAlphabet{\mathpzc}{OT1}{pzc}{m}{it}

\allowdisplaybreaks

\begin{document}
	
	\maketitle
	
	\begin{abstract}
		We develop and analyze a Monte Carlo method for sampling killed anomalous
diffusions obtained by time-changing Brownian motion with drift by the inverse of
a subordinator. The method targets probabilistic representations of time-nonlocal,
including time-fractional, Cauchy--Dirichlet problems on bounded domains. Since
inverse subordinators can be sampled exactly in broad classes, while Brownian exit
times are generally unavailable in arbitrary domains, we approximate the killed
Brownian component by an Euler scheme with discrete boundary detection. We prove
a square-root weak error bound with explicit dependence on the Laplace exponent of
the subordinator, and derive mean-square and central limit results for the resulting
Monte Carlo estimator.  A numerical example in the disk and one in a high-dimensional anisotropic shell illustrate the theoretical rates, computation time, and the mesh-free character of the method.
	\end{abstract}
	
	
	    \section{Introduction}
\label{sec:introduction}

The aim of this paper is to approximate, at a fixed time $T>0$ and a point
$x\in D$, the solution to a time-nonlocal Cauchy--Dirichlet problem in a bounded
domain. The model example is the time-fractional problem
\begin{equation}
\label{eq:intro_frac_problem}
\begin{cases}
\partial_t^\alpha u(t,x)=\mathcal{G}u(t,x), & t>0,\ x\in D,\\
u(t,x)=0, & t>0,\ x\in D^c,\\
u(0,x)=f(x), & x\in D,
\end{cases}
\end{equation}
where $\alpha\in(0,1)$, $\partial_t^\alpha$ is the Caputo derivative, i.e.,
\[\partial_t^\alpha f(t)=\frac{1}{\Gamma(1-\alpha)}\int_0^t f'(s)(t-s)^{-\alpha}ds,\]
and
\[
        \mathcal{G}g(x)
        =
        \mu\cdot \nabla g(x)
        +
        \frac12\sum_{i,j=1}^d \Sigma_{ij}\partial_{ij}g(x)
\]
is the generator of a Brownian motion, say $X = (X_t)_{t \geq 0}$, with drift $\mu$ and covariance matrix
$\Sigma$. More general time-nonlocal equations are obtained by replacing the
Caputo derivative with convolution-type derivatives associated with subordinators;
this theory is developed, e.g., in
\cite{anh2016space, BLM-SpaceTime18,chen-chaos,chen2012space,dovidio_bounded,MeerschaertNaneVellaisamy2009,toaldo2015levy}, and the reader can consult \cite[Chapter 17]{schilling2026bernstein} for a recent review. Our method will also apply to this generalization.

The probabilistic representation of \eqref{eq:intro_frac_problem} or its more general version is given in
terms of a Brownian motion time-changed by the inverse of a subordinator. Let
$S=(S_t)_{t\geq 0}$ be a subordinator (i.e., a non-decreasing Lévy process), independent of $X$. Assume that $S$ is strictly increasing and let
\[
        L_t\coloneqq\inf\{s>0:S_s>t\}
\]
be its inverse. Define the time-changed process
\[
        \chi_t\coloneqq X_{L_t}, \qquad t\geq 0.
\]
The inverse time-change makes $\chi$ non-Markovian, although it admits a semi-Markov description after augmenting the state space by an age variable \cite{Meerschaert2014}, i.e., it can be embedded in a Markov process. Such processes naturally describe anomalous diffusion: their mean-square displacement may grow nonlinearly in time, giving subdiffusive, diffusive, or superdiffusive regimes depending on the subordinator's jumps and on the presence of drift.

Let
\[
        T_D=\inf\{t>0:X_t\notin D\},
        \qquad
        \tau_D=\inf\{t>0:\chi_t\notin D\},
\]
denote the exit times of processes $X$ and $\chi$, respectively, from the open set $D$.
The connection between the exit time of the time-changed process and the exit
time of the underlying Brownian motion gives, under the standing assumptions (see, e.g., \cite{ascione_annals}),
\[
        \{T<\tau_D\}=\{L_T<T_D\}.
\]
 Hence, the solution to be approximated is
\begin{equation}
\label{eq:intro_target}
        u(T,x)
        =
        \mathbb{E}_x\!\left[
            f(\chi_T)\mathds{1}_{\{T<\tau_D\}}
        \right]
        =
        \mathbb{E}_x\!\left[
            f(X_{L_T})\mathds{1}_{\{L_T<T_D\}}
        \right].
\end{equation}

The Monte Carlo algorithm used in this paper is the following. For one sample,
first generate $L_T$, using exact sampling methods for inverse subordinators when
available \cite{Jorge2023b,Jorge2023a}. Then build the random time grid
\[
        (t_0,\, t_1,\,t_2,\,\dots)=(0,h,2h,\dots,\lfloor L_T/h\rfloor h,L_T).
\]
Starting from $Y_0=x$, simulate the Brownian increments with drift along this
grid:
\[
        Y_{t_i}
        =
        Y_{t_{i-1}}
        +
        \mu(t_i-t_{i-1})
        +
        \sigma(W_{t_i}-W_{t_{i-1}}),
\]
where $\sigma$ is a square root of the covariance matrix $\Sigma$.
If one of the simulated points leaves $D$ before the terminal time $L_T$, the
sample is set equal to zero. Otherwise, the sample is set equal to $f(Y_{L_T})$.
Thus one realization returns
\[
        {Z_h
        =
        f(Y_{L_T})\mathds{1}_{\{L_T<\widehat{T}_{D}^{\,h,L_T}\}}},
\]
{where $\widehat{T}_{D}^{\,h,L_T}$ is the (endpoint-augmented) discrete exit time defined in Subsection~\ref{ss:scheme}.} Repeating this
procedure independently $N$ times gives the estimator
\begin{equation}
\label{eq:intro_mc_estimator}
        u_N^h(T,x)
        =
        \frac1N\sum_{k=1}^N Z_h^{(k)}.
\end{equation}

The typical advantage of Monte Carlo approach, compared with deterministic
space-time discretizations, is that it targets directly
the pointwise value $u(T,x)$. One does not need to construct a spatial mesh, store
the full time history of the nonlocal derivative, or assemble a global
finite-difference or finite-element operator. More precisely, deterministic numerical methods for time-nonlocal diffusion problems on bounded domains include finite-difference and spectral schemes for Caputo-type subdiffusion \cite{LiXu2009, LinXu2007}, convolution-quadrature and finite-element schemes \cite{CuestaLubichPalencia2006,JinLazarovZhou2013,JinLazarovZhou2016L1,JinLazarovZhou2016FullyDiscrete}, discontinuous-Galerkin and graded-mesh methods \cite{McLeanMustapha2009,Mustapha2011,StynesORiordanGracia2017}, and finite-difference/finite-element or Laplace-transform approaches for distributed-order models \cite{BuXiaoZeng2017,JinLazarovSheenZhou2016,YeLiuAnh2015}. A related deterministic approach for distributed-order time-fractional diffusion-wave equations, based on numerical inverse Laplace transforms and discontinuous Galerkin spatial discretization, was proposed by Engstr\"om, Giani and Grubi\v{s}i\'c \cite{EngstromGianiGrubisic2023}. The Monte Carlo method developed here is complementary to these deterministic solvers and may be preferable when only pointwise values of the solution are required. This is true especially in high dimension or in geometries where building a space-time mesh and handling the memory history are costly, since the time non-locality is reduced to sampling the inverse clock and the boundary condition is enforced by pathwise membership tests. We illustrate this in our Examples, in particular in Example \ref{ex:high-dimensional-shell} where $D$ is chosen to be an anisotropic high-dimensional shell.
Our method is also naturally parallelizable, since the
samples in \eqref{eq:intro_mc_estimator} are independent. 
Related Monte Carlo methods for fractional partial differential equations have been studied in
\cite{kolokoltsov2021}.

Once the algorithm has been fixed, the main mathematical question is to evaluate
the error of the approximation. There are two sources of error. The first is the
bias caused by replacing the true killed process
$f(X_{L_T})\mathds{1}_{\{L_T<T_D\}}$ by the discretely killed approximation
{$f(Y_{L_T})\mathds{1}_{\{L_T<\widehat{T}_{D}^{\,h,L_T}\}}$}. The second is the statistical error caused
by averaging only finitely many independent samples. The exact simulation of
$L_T$ is available for broad classes of subordinators, whereas exact simulation of
$T_D$ is only available in special geometries, such as balls, or in one-dimensional
settings \cite{HerrmannZucca2019,HerrmannZucca2020,Hsu1986}. Therefore the
essential approximation in the present paper is the discrete detection of the
Brownian exit time.

The analysis is inspired by Gobet's work on the weak approximation of killed
diffusions by Euler schemes \cite{Gobet2000}. However, our setting contains an
additional difficulty: the deterministic time horizon in the killed diffusion
estimate is replaced by the random horizon $L_T$. It is therefore not enough to
know that the killed Euler scheme has weak error of order $\sqrt{h}$ for each fixed
time. We need to track explicitly how the constants depend on the time horizon,
and then integrate those bounds with respect to the law of $L_T$.

For this reason, the present paper carries out the full error analysis for
Brownian motion with drift, that is, for constant coefficients in the underlying
SDE. This restriction is deliberate. For a general diffusion
\[
        \mathrm{d}X_t=b(X_t)\,\mathrm{d}t+\sigma(X_t)\,\mathrm{d}W_t,
\]
the Euler interpolation no longer has the same dynamics as the true process
between two grid points. In the decomposition of the weak approximation error
used in \cite{Gobet2000}, this produces additional interior error terms, in
addition to the boundary-crossing terms. If one tries to keep the dependence on
the time horizon explicit, these extra terms require cumbersome estimates on
derivatives of the associated parabolic problem, stochastic-flow terms, and
integration-by-parts arguments such as those based on Malliavin calculus. In the
Brownian-with-drift case, these complications disappear: the continuous
interpolation has the correct Brownian dynamics, and the dominant approximation
error comes from the fact that the discrete scheme may miss a boundary crossing.

The first main result is a deterministic-horizon weak error estimate for killed
Brownian motion with drift. Under standard regularity assumptions on the domain
and either a support condition or a boundary compatibility condition on $f$, we
prove that
\[
 \left|
 \mathbb{E}_x\!\left[
     f(X_s)\mathds{1}_{\{s<T_D\}}
     -
     {f(Y_s)\mathds{1}_{\{s<\widehat{T}_{D}^{\,h,s}\}}}
 \right]
 \right|
 \leq
 C(1+s)A(f)\sqrt{h},
 \qquad s>0,
\]
where $A(f)$ denotes the corresponding suitable norm of the function $f$. The important
point is the explicit linear dependence on $s$.

Conditioning on $L_T$ and using the moment bound
\[
        \mathbb{E}L_T\leq \frac{e}{\phi(1/T)}
\]
then gives the weak error bound for the killed time-changed process:
\[
 \left|
 \mathbb{E}_x\!\left[
     f(X_{L_T})\mathds{1}_{\{L_T<T_D\}}
     -
     {f(Y_{L_T})\mathds{1}_{\{L_T<\widehat{T}_{D}^{\,h,L_T}\}}}
 \right]
 \right|
 \leq
 C\left(1+\frac{e}{\phi(1/T)}\right)A(f)\sqrt{h}.
\]
Thus the error induced by the discrete killing rule is controlled explicitly in
terms of the Laplace exponent of the subordinator.

We also analyze the Monte Carlo estimator \eqref{eq:intro_mc_estimator}. We prove
that its mean-square error satisfies
\[
        \mathbb{E}_x\!\left[
             \bigl(u_N^h(T,x)-u(T,x)\bigr)^2
        \right]
        \leq
        \frac{\|f\|_\infty^2}{N}
        +
        C A(f)^2 h.
\]
The first term is the statistical error, while the second term is the squared weak
bias due to the discrete boundary detection. Finally, choosing $h=h_N$ so that
the bias is negligible at the central-limit scale, for instance $h_N=N^\delta$
with $\delta<-1$, yields a central limit theorem for the Monte Carlo estimator.

The paper is organized as follows. Section~\ref{s1} introduces the Brownian motion with
drift, inverse subordinators, killed time-changed processes, and the sampling
scheme. Section~\ref{s2} proves the deterministic-horizon killed Brownian error estimate,
then integrates it over the inverse-subordinator clock to obtain the main weak
error bound. The same section contains the mean-square and central limit analysis
of the Monte Carlo estimator. Finally, Section~\ref{s:examples} contains numerical examples, one in the disk and one in a high-dimensional anisotropic shell, which illustrate the theoretical rates and computation time of our method.

	\section{Preliminaries and assumptions}\label{s1}
Let $X=(X_t,\,t\geq0)$ denote the Brownian motion in $\R^d$, $d\ge1$, with the transition density of $X_t$ is given by
\begin{align}\label{transition}
    \pr_x(X_t\in dy)/dy = p(t,x,y) = \frac{\exp\left(-(y-x-\mu t)^T\Sigma^{-1}(y-x-\mu t)/2t\right)}{((2\pi t)^d\,\text{det}\Sigma )^{1/2}},
\end{align}
for all $t>0$, and $x,y\in \R^d$, where $\mu\in\R^d$, and ${\Sigma=(\Sigma_{ij})_{1\le i,j\le d}}\in \R^{d\times d}$ is a positive definite matrix. We decompose $\Sigma=\sigma\sigma^T$ for (any, but fixed) $\sigma\in\R^{d\times d}$ so that $X_t\overset{D}=\mu+\sigma W_t$, where $W=(W_t,\,t\geq0)$ is the standard Brownian motion in $ \R^d$. Denote by $\Sigma_-$ the smallest eigenvalue of $\Sigma^{-1}$ and by $\Sigma_+$ the largest eigenvalue of $\Sigma^{-1}$, so it holds
\begin{align}\label{eq: parabolic}
    \Sigma_-\,|y-x-\mu t|^2\leq (y-x-\mu t)^T\Sigma^{-1}(y-x-\mu t) \leq \Sigma_+\,|y-x-\mu t|^2,
\end{align}
for all $x,y\in \R^d$ and $t\ge0$, where $|\cdot|$ denotes the standard Euclidean norm in $\R^d$, the notation which we use throughout the paper.

   We will denote $\pr_x$ the probability measure such that $X_0=x$, $\pr_x$--a.s., and $\ex_x$ will be the corresponding expectation operator.
	Let $D\subset \R^d$ be a bounded open set, and let $T_D=\inf\{t>0:X_t\notin D\}$ be the first exit time of the Brownian motion $X$ from the set $D$. The killed Brownian motion upon exiting the set $D$ is denoted by $X^D$ and given by
	\begin{align}
		X_t^D=\begin{cases}
			X_t,&t<T_D,\\
			\partial, &t\ge T_D,
		\end{cases}
	\end{align}
	where $\partial$ is an additional point added to $\R^d$ called {\it the cemetery}. The killed process $X^D$ has a transition density $p_D(t,x,y)$ for which the Hunt formula holds:
	\begin{align}\label{hunt}
		\pr_x(X^D_t\in dy)/dy=p_D(t,x,y)=p(t,x,y)-\ex_x[p(t-T_D,X_{T_D},y)\1_{t>T_D}],
	\end{align}
	for all $x,y\in \R^d$, and $t>0$, see, e.g., \cite[Section 2.2, Eq. 4]{ChungZhao}. Moreover, $p_D$ is highly regular and it holds that
	\begin{align}\label{eq:1425}
	((0, +\infty) \times D \times D) 	\ni (t, x, y) \mapsto |\partial^\beta_xp_D(t,x,y)|\le \frac{c_0(d,D,\Sigma,\mu)}{t^{(d+|\beta|)/2}}e^{-c_1(\Sigma)\frac{|x-y|^2}{t}},
	\end{align}
    for each multi-index $\beta$ up to the integer order of the smoothness of the domain $D$. This seems to be a well-known result, see e.g. \cite{Garroni1992, Ladyzenskaja1968}, but it is usually stated for a fundamental solution to a general parabolic equation, and in such case  the constant $c_0$ may also depend on a time horizon $T \geq t$. Therefore, we will prove again \eqref{eq:1425} in Lemma \ref{ap:l:dens-reg} showing that in the Brownian motion case we do not have such additional time dependence.

It is well known that if $D$ is regular for the Brownian motion, i.e., $\pr_x (T_D =0)=1$, for every $x \in \partial D$, the Cauchy-Dirichlet problem 
\begin{alignat}{2}\label{bpde}
        (\partial_t + G)v(t,x)&=0, &\quad (t,x)&\in(0,T)\times D,\notag\\
    v(t,x)&=0, &\quad (t,x)&\in[0,T]\times D^c,\\
    v(T,x)&=f(x), &\quad x&\in D,\notag
\end{alignat}

where $G$ is the infinitesimal generator of $X$, i.e.
\begin{align}
    Gu(x) = \sum_{i=1}^d \mu_i\partial_{x_i}u(x) + \frac{1}{2} \sum_{i,j=1}^d {\Sigma_{ij}} \partial_{x_i x_j}^2u(x),
\end{align}
has a pointwise solution
\begin{align}\label{feynmann-kac}
    v(t,x)=\int_D p_D(T-t,x,y)f(y)dy =\ex_x[f(X_{T-t})\1_{T-t<T_D}].
\end{align}
Here, the classical theory of Feller operators covers the class of initial values $f\in \DD(G)=\{f\in C_0(D): Gf\in C_0(D)\}$, where $C_0(D)$ denotes continuous functions vanishing at the boundary of $D$. However, for $f\in L^2(D)$, the function in \eqref{feynmann-kac} is a weak/mild solution to \eqref{bpde}, which by parabolic regularization is smooth in $(0,T)\times D$, and consequently it also pointwisely solves \eqref{bpde}.
	
	To introduce killed subdiffusions, we will consider a time-changed Brownian motion killed upon exiting the domain $D$. To this end, let $S=(S_t,\,t\geq0)$ be a subordinator (i.e. a non-negative L\'evy process) with $S_0=0$, independent of the Brownian motion $X$, with the Laplace exponent 
	\begin{align}\label{eq:bernstein}
		\phi(\lambda)=b\lambda+\int_0^\infty(1-e^{-\lambda s})\nu(ds),\quad \lambda\ge 0.
	\end{align}
	Here $b\ge0$ and is called the drift of the subordinator, and $\nu$ is a measure such that $\int_0^\infty (1\wedge s)\nu(ds)<+\infty$ and is called the L\'evy measure of the subordinator. The function in \eqref{eq:bernstein} is called a Bernstein function, and such functions characterize subordinators, see \cite[Theorem 5.2]{bernstein}. For a rich collection of Bernstein functions refer to \cite[Chapter 16]{bernstein}.
    
    In this paper, we are always assuming that $b> 0$ or $\nu(0, +\infty) = +\infty$, i.e. $S$ is not a compound Poisson process, and it means that $S$ is strictly increasing.
    Consider now the inverse of $S$, i.e., the process $L=(L_t,\,t\geq0)$ where $L_t=\inf\{s>0:S_s>t\}$ (see more on inverse subordinators in \cite{bertoin1999,ascione2024regularity, rivero_doney}).

    Let $\chi=(\chi_t,\,t\geq0)$ be the process defined by $\chi_t=X_{L_t}$, $t\geq0$. This process is not Markovian since the time-change induces intervals of constancy with non-exponential distribution; however, it enjoys the so-called semi-Markov property, i.e. $((\chi_t, \gamma_t), t \geq 0)$, where $\gamma_t \coloneqq t-S_{L_t-}$ is a simple Markov process, see \cite[Theorem 4.1]{Meerschaert2014}. Furthermore, this process exhibits different diffusivity regimes. Indeed, without loss of generality, let $\chi_t$ start at zero a.s. By a simple conditioning argument, we have
    \begin{align}
        \ex_0 |\chi_t|^2 \, = \,  \ex_0 |X_{L_t}|^2 = \mu^T\mu \,\ex_0 L_t^2 + \textrm{Tr}(\Sigma)\, \ex_0 L_t,
    \end{align}
    where $\textrm{Tr}(\Sigma)$ denotes the trace of $\Sigma$.
    Then, for example, consider the case $S$ is an $\alpha$-stable subordinator with $\alpha\in(0,1)$. One has, from \cite[Eq. (3.18)]{kolokoltsov2021}, that $\ex L_t = c_1 t^\alpha$ and $\ex L_t^2 = c_2 t^{2\alpha}$, $c_1,\,c_2>0$. If $\mu\not\equiv0$, the process $\chi_t$ exhibits subdiffusive behavior for $\alpha\in(0,0.5)$, diffusive for $\alpha=0.5$, and superdiffusive for $\alpha\in(0.5,1)$, whereas, in the case $\mu=0$, the process $\chi_t$ is always subdiffusive.

    For a general subordinator $S$, the bounds for $\ex L_t^k$, $k\in\N$, for all $t>0$, are: $c\leq \phi(1/t) \ex L_t \leq e$, $0<c<e$, see \cite[Chapter III, Proposition 1]{bertoin1996}; and $c^k \leq \phi^k(1/t)\ex L_t^k \leq e\Gamma(1+k)$, $k\geq1$, where $\Gamma$ denotes the gamma function. The upper bound is provided in Lemma~\ref{lemma: L_bound} while the lower bound comes from the first moment's lower bound and Jensen's inequality.

Denote by $\chi^D$ the process $\chi$ killed upon exiting the set $D$, i.e.
\begin{align}
    \chi_t^D =
    \begin{cases}
        \chi_t, & t<\tau_D, \\
        \partial, & t\geq \tau_D,
    \end{cases}
\end{align}
where $\tau_D \coloneqq \inf\{t>0:\, \chi_t\notin D\}$ is the first exit time of the process $\chi$ from the open set $D$. Since $\chi$ is obtained by the time-change of $X$ with the inverse subordinator $L$, there is a strong connection between the exit times $T_D$ and $\tau_D$.
In particular, it holds that $\tau_D=S_{T_D-}$, see, e.g., \cite{ascione_annals}. However, since $S$ is independent of $X$ and since at any fixed time $t$, the probability that the process $S$ jumps is zero, by conditioning on $T_D$, we obtain $S_{T_D-}=S_{T_D}$ $\pr_x$-a.s., for all $x\in\R^d$. Hence, $\tau_D=S_{T_D}$ $\pr_x$-a.s., for all $x\in\R^d$.

With this at hand we define
\begin{align}\label{feynmann-kac_subordinated}
    u(T,x) \coloneqq \ex_x[f(\chi_T)\1_{T<\tau_D}]=\ex_x[f(\chi_T)\1_{L_T<T_D}],
\end{align}
where $T>0$ is a fixed time and $f$ is as in \eqref{bpde} (in our paper we either work under \ref{assumption_f} or \ref{assumption_f_}). This paper aims to study the Monte Carlo estimator of $u(T,x)$. In the subsequent subsection, we address the challenges associated with doing so.

We recall that the function $u(t,x)$ is the stochastic representation of the solution to a non-local (fractional-type) equation in the bounded domain $D$. Indeed, for $\phi(\lambda) = \lambda^\alpha$, i.e., the $\alpha$-stable subordinator case, the function \eqref{feynmann-kac_subordinated} satisfies the time-fractional equation
\begin{alignat}{2}
    \partial_t^\alpha u(t,x)  &= Gu(t,x), \quad &&t>0, x \in D, \label{eq1}\\
  u(t,x)  &= 0, \quad &&t>0,x \in D^c, \label{eq2}\\
  u(t,x) &= f(x),\quad && t=0, x \in D,\label{eq3}
\end{alignat}
where $\partial_t^\alpha f(t)=\frac{1}{\Gamma(1-\alpha)}\int_0^t f'(s)(t-s)^{-\alpha}ds$ is the fractional Caputo derivative, see, e.g. \cite{MeerschaertNaneVellaisamy2009} (when the Caputo derivative $\partial_t^\alpha$ and the generator $G$ are replaced by a more general time and space operators, see \cite{anh2016space, BLM-SpaceTime18,chen-chaos,chen2012space, dovidio_bounded,toaldo2015levy}). The classical well-posedness of the time-fractional
Cauchy-Dirichlet problem on bounded domains follows from
\cite[Theorem~3.1]{MeerschaertNaneVellaisamy2009}, where the unique classical
solution is represented by a killed Brownian motion time-changed by an inverse
stable subordinator; see also \cite[Theorem~3.6]{MeerschaertNaneVellaisamy2009}
for uniformly elliptic generators and \cite{Luchko2009Maximum,Luchko2010Uniqueness}
for maximum-principle-based uniqueness and existence results for generalized
time-fractional diffusion equations. For more general non-local-in-time equations, maximum-principle and
well-posedness results beyond the single Caputo kernel are available for
distributed-order and more general time-fractional operators; see, for instance,
\cite{Luchko2009DistributedOrder,LuchkoYamamoto2016}.

	\subsection{Sampling scheme}\label{ss:scheme}
The main task of the paper is to  pointwisely approximate, for fixed $x \in D$ and $T>0$, the function $u(T,x)$ defined in \eqref{feynmann-kac_subordinated}.

To accomplish this task, it is essential to sample the random variable $\chi_T\1_{T<\tau_D}=X_{L_T}\1_{L_T<T_D}$ (or approximate it). Clearly, since $X$ is a Brownian motion, one can get exact samples of $X$ at a fixed time. Regarding the inverse subordinator $L$, Algorithm 1 in \cite{Jorge2023b} enables one to generate exact samples of $L$ at a fixed time for a wide class of subordinators (see the exact condition on this class in \cite[Eq. (1.2) and Appendix A]{Jorge2023b}), while the algorithms in \cite{biocic2024}, for the same class of subordinators, enable the sampling of trajectories in the sense of finite-dimensional distributions $(L_{t_1}, \cdots, L_{t_n})$, for any choice of times $0\le t_1<\cdots<t_n$ and $n \in \mathbb{N}$. The exact sampling of the random variable $T_D$, however, is only feasible in a few cases, such as $d$-dimensional spheres and in dimension one \cite{HerrmannZucca2019, HerrmannZucca2020, Hsu1986}. In light of the paucity of formulae for generating $T_D$ in general higher-dimensional domains, we are going to use an Euler scheme as in \cite{Gobet2000} to approximate $T_D$.

Consider the Euler-Maruyama scheme:
\begin{align}\label{euler}
    Y_{t_{i}} = Y_{t_{i-1}} + \mu(t_{i}-t_{i-1}) + \sigma(W_{t_{i}}-W_{t_{i-1}}),\quad Y_0 = x,
\end{align}
where $t_0=0$, $t_i=ih$, $i\in\N$ and $h>0$, and where $W=(W_t,\,t\geq0)$ is the standard Brownian motion started at $0$ under $\pr_x$ for all $x$, while we treat $\pr_x(Y_0=x)=1$. For every deterministic horizon $q>0$ define the (endpoint-augmented) discrete exit time by
\begin{align}\label{eq:endpoint-exit}
    \widehat{T}_{D}^{\,h,q}\coloneqq
    \inf\Big\{t\in\{h,2h,,\dots,\lfloor q/h\rfloor h, q\}:\ Y_t\notin D\Big\},
\end{align}
with the convention $\inf\emptyset=+\infty$. Thus, the terminal point $q$ is always inspected, including when $q/h$ is not an integer. Therefore, {$f(Y_{L_T})\1_{L_T<\widehat{T}_{D}^{\,h,L_T}}$} gives an approximation of $f(X_{L_T})\1_{L_T<T_D}$, and we will study the Monte Carlo estimator of $u(T,x)$ in the form of $u_N^h(T)$:
\begin{align}\label{MCestimator}
    u_N^h(T,x) = \frac{1}{N}\sum_{k=1}^N Z_h^k,\quad {Z_h = f\left(Y_{L_T}\right)\1_{L_T<\widehat{T}_{D}^{\,h,L_T}}},
\end{align}
where the superscript $k$ denotes the $k$-th independent copy of $Z_h$. Here, the term $u_N^h(T)$ is written without the dependence on $x$ since this dependence is transferred to the underlying probability measure $\pr_x$.

For $t\in[t_{i-1},t_{i})$, we denote the interpolation of $Y_{t_i}$, as $Y_t$, which is given by
\begin{align}
    Y_t = Y_{t_{i-1}} + \mu(t-t_{i-1}) + \sigma(W_t-W_{t_{i-1}}).
    \label{interpolated}
\end{align}
This process is for theoretical purposes only, as the information of the interpolated values is not used in the evaluation of \eqref{feynmann-kac_subordinated} (or, more precisely, its approximation $u_N^h(T)$ under $\pr_x$).

Although one might be tempted to sample $\widehat{T}_{D}^{\,h,L_T}$ first and then compare it to $L_T$, the following approach, schematized in Algorithm \ref{alg: 1}, is more efficient since only the mesh points up to and including $L_T$ need to be generated.

\begin{algorithm}
    \caption{This routine returns the random variable {$f(Y_{L_T})\1_{L_T<\widehat{T}_{D}^{\,h,L_T}}$}, for fixed $T>0$ and $x\in D$}\label{alg: 1}
    \KwData{set $D$, function $f$, time-step $h$, position $Y_0=x\in D$, final time $T>0$}
    Generate $L_T$ (by, e.g., \cite{Jorge2023b})\\
    Create the time grid $(t_0,t_1,t_2,\dots,t_{n-1},t_n)=(0, h, 2h, \dots, \lfloor L_T /h \rfloor h, L_T )$.\\
    \For{$i=1$ \KwTo $n$}{
        $Y_{t_i} \gets Y_{t_{i-1}} + \mu(t_i-t_{i-1}) + \sigma(W_{t_i}-W_{t_{i-1}})$\\
        \eIf{$Y_{t_i}\notin D$}{
         {$f(Y_{L_T}) \1_{L_T<\widehat{T}_{D}^{\,h,L_T}} \gets 0$} and \textbf{break}
        }{ {$f(Y_{L_T}) \1_{L_T<\widehat{T}_{D}^{\,h,L_T}} \gets f(Y_{t_i})$}}
    } 
\end{algorithm}

\begin{remark}
    Algorithm \ref{alg: 1} can be readily implemented to sample from the finite-dimensional distributions of the killed time-changed process, {i.e., to sample the functional $f(Y_{L_{T_1}}, \cdots, Y_{L_{T_n}}) \mathds{1}_{\{L_{T_n}<\widehat{T}_{D}^{\,h,L_{T_n}}\}}$}. For instance, in step 1, the values $(L_{s_1},\dots,L_{s_n})$ by using \cite{biocic2024}; in step 2, {create} the time grid $$(t_1,\dots,t_n)=\texttt{Sort}\big( (0,h,2h,\dots,\lfloor L_{s_n} /h \rfloor h),\,(L_{s_1},\dots,L_{s_n})\big);$$ in steps 3-10, repeat the same procedure, but if $Y_{t_i}\notin D$,
    put $Y_{t_k}=\partial$ for all $k=i,i+1,\dots,n$; add an additional step that keeps the values $Y_{t_i}$ where $t_i\in \{L_{s_1},\dots,L_{s_n}\}$.
\end{remark}

In the following sections, we first study the error of the approximation {$f(Y_{L_T})\1_{L_T<\widehat{T}_{D}^{\,h,L_T}}$} of the expression $f(X_{L_T})\1_{L_T<T_D}$. Then, we conduct an error analysis  of the approximation $u_N^h(T,x)$ of $u(T,x)$.

\section{Sampling killed anomalous diffusion}\label{s2}
\subsection{Exact error bounds for sampling killed Brownian motion}
	The approach we use to determine the error of the approximation is inspired by \cite{Gobet2000} where a suitable class of killed diffusion processes was studied and where the corresponding solutions to the Cauchy-Dirichlet problem \eqref{bpde} were approximated. The method in \cite{Gobet2000} provides a bound for the approximation error which is dependent on the time horizon $T>0$. The main issue with that approach in our context is that the time horizon here is randomized because of the time-change (i.e. the horizon is $L_T$).
    
A simple argument, using the independence between the Brownian motion $X$ and the subordinator $S$ yields
\begin{align}\label{Error}
    \begin{split}
        &\ex_x\left[f\left(Y_{L_T}\right)\1_{L_T<\widehat{T}_{D}^{\,h,L_T}}-f\left(X_{L_T}\right)\1_{L_T<T_D}\right] \\ 
        &\quad= \int_0^\infty \ex_x\left[f\left(Y_s\right)\1_{s<\widehat{T}_{D}^{\,h,s}}-f\left(X_s\right)\1_{s<T_D}\right]\pr_x(L_T\in ds).
    \end{split}
\end{align}
Note that the integrand, i.e. {$\ex_x\left[f\left(Y_s\right)\1_{s<\widehat{T}_{D}^{\,h,s}}-f\left(X_s\right)\1_{s<T_D}\right]$}, corresponds to the error induced by approximating the killed Brownian motion with drift by using the Euler scheme, studied in \cite{Gobet2000}. {The only difference is that the length of the last interval of the grid, i.e., ending at time $s$, is at most $h$, and not necessarily equal to $h>0$, but, as we will show, this will be inessential for the error evaluation.} Here it is important to recall that $\pr_x$ is the probability measure under which $X_0=x$ and $Y_0=x$, and to stress that the sub-index $x$ in $\pr_x$ has no effect on the law of the inverse subordinator $L$.

To obtain the error bounds, we impose regularity assumptions on $D$ and $f$, as in \cite{Gobet2000}, where it is useful to recall the definition of domain classes from \cite[Section 6.2]{Gilbarg1977}.
From now on, we always assume:
\begin{assumption}\label{assumption_domain}
    The set $D$ is a bounded domain of class $C^{3+\alpha}$.
\end{assumption}
\noindent In \ref{assumption_domain} we assume a bit more than in \cite{Gobet2000} in order to explicitly track the dependence of the constants on the time horizon in the classical diffusion case.
Regarding the function $f$, we will use one of the two following assumptions.
\begin{assumption}\label{assumption_f}
    The function $f$ is a bounded measurable function, satisfying $\dist(\supp{f},\partial D)\geq 2\varepsilon>0$ for some $\varepsilon>0$.
 \end{assumption}

 \begin{assumption}\label{assumption_f_}
     The function $f\in C^{2+\alpha}(\overbar{D})$, for some $\alpha>0$, and satisfies $f(z)=Gf(z)=0$, $z\in\partial D$.
 \end{assumption}
 Here, $C^{k+\alpha}(\overbar{D})$ denotes the Banach space of bounded continuous functions in $D$ such that the derivatives of order $k\in\N$ are H\"{o}lder continuous in $\overbar{D}$ (with exponent $\alpha\in(0,1)$), where the norm is
\begin{align*}
    \|f\|_{k+\alpha,\,\overbar{D}} = \sum_{i=0}^k \sum_{|s|=i} \sup_{x\in D}|\partial_x^s f| + \sum_{|s|=k} \sup_{x,x^\prime\in D} \frac{|\partial_x^s f(x) - \partial_x^s f(x^\prime)|}{|x-x^\prime|^\alpha},
\end{align*}
and $s$ in the sums above denotes a multi-index.

\begin{lemma}
Assume \ref{assumption_domain}. On an event of $\pr_x$-probability one, for every $q>0$ and every sequence $h_n\to0$,
\begin{align}\label{eq:endpoint-exit-convergence}
    \widehat{T}_{D}^{\,h_n,q}\wedge q\longrightarrow T_D\wedge q.
\end{align}
Moreover, on $\{T_D\le q\}$ one has $\widehat{T}_{D}^{\,h_n,q}\to T_D$, and
\begin{align}\label{eq:endpoint-indicator-convergence}
    \1_{q<\widehat{T}_{D}^{\,h_n,q}}\longrightarrow \1_{q<T_D}.
\end{align}
Consequently, the same assertions hold with the random horizon $q=L_T$.
\end{lemma}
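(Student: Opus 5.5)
We work pathwise on a single event $\Omega_0$ of full probability, built from three properties. First, on $\Omega_0$ the trajectory $t\mapsto X_t$ (equivalently $Y_t$, which for Brownian motion with drift coincides with $X_t$ at the grid points) is continuous. Second, because $D$ is $C^{3+\alpha}$, it satisfies the exterior cone (indeed exterior sphere) condition, so every boundary point is regular. The strong Markov property at $T_D$, together with a Blumenthal $0$–$1$ argument, then shows that almost surely, for every $\delta>0$, the path enters the open set $\overbar{D}^c$ during $(T_D,T_D+\delta)$ whenever $T_D<\infty$. Third, $T_D<\infty$ a.s. because $D$ is bounded, although this is not needed.

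The interplay with $q$ needs care, because the statement must hold simultaneously for every $q>0$ and every sequence $h_n\to0$. The event $\Omega_0$ we define does not depend on either, so the issue disappears.

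\textbf{Lower bound.} We fix $\omega\in\Omega_0$, $q$ and $(h_n)$, and set $\tau=T_D\wedge q$. For any $t<T_D$ we have $X_s\in D$ for all $s\le t$. Hence every grid point inspected in $(0,t]$ lies in $D$, which gives $\widehat{T}_{D}^{\,h_n,q}>t$ when $t<q$. This yields $\widehat{T}_{D}^{\,h_n,q}\ge T_D$ whenever $T_D\le q$. When $T_D>q$, all inspected points, including the endpoint $q$, lie in $D$, so $\widehat{T}_{D}^{\,h_n,q}=+\infty$. In both cases $\widehat{T}_{D}^{\,h_n,q}\wedge q\ge \tau$ holds for every $n$. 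In fact, on $\{T_D>q\}$ the convergence in \eqref{eq:endpoint-exit-convergence} is trivial and $\1_{q<\widehat{T}_{D}^{\,h_n,q}}=1=\1_{q<T_D}$.

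\textbf{Upper bound on $\{T_D\le q\}$.} We fix $\delta>0$. By the second property, there is an open interval $(a,b)\subset(T_D,T_D+\delta)$ on which $X\in\overbar{D}^c$; this uses continuity of the path and openness of $\overbar{D}^c$.

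If $T_D<q$, we choose $\delta$ small enough that $T_D+\delta<q$. For $n$ large we have $h_n<b-a$, so some grid point $kh_n\in(a,b)$ with $kh_n\le\lfloor q/h_n\rfloor h_n$ exists. At that point $Y_{kh_n}=X_{kh_n}\notin D$, and therefore $\widehat{T}_{D}^{\,h_n,q}<T_D+\delta$.

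If $T_D=q$, the endpoint $q$ is inspected and $X_q\in\partial D\not\subset D$, so $\widehat{T}_{D}^{\,h_n,q}\le q=T_D$ directly.

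Combining these cases with the lower bound gives $\widehat{T}_{D}^{\,h_n,q}\to T_D$ on $\{T_D\le q\}$, and hence \eqref{eq:endpoint-exit-convergence}. The indicator statement \eqref{eq:endpoint-indicator-convergence} follows on this event as well, since there $\widehat{T}_{D}^{\,h_n,q}\le q$ eventually: on $\{T_D<q\}$ it is $<q$ for large $n$, and on $\{T_D=q\}$ it equals $q$. So both indicators vanish. This endpoint inspection is exactly why the augmented grid is used.

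\textbf{Random horizon.} Since $\Omega_0$ does not depend on $q$ and $L_T$ is independent of $X$, substituting $q=L_T(\omega)$ pathwise gives the final assertion.

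\textbf{Main obstacle.} The delicate step is establishing a single null set that is uniform in all $q$ and all sequences $(h_n)$. It rests on the regularity statement that almost surely the path immediately enters the interior of $D^c$, rather than touching $\partial D$ and returning. To prove it, we apply the strong Markov property at $T_D$ together with a countable intersection over rational $\delta$. At a regular point with an exterior cone, $\pr_z(\exists\, t<\delta: X_t\in \overbar{D}^c)=1$ for every $z\in\partial D$, by the cone test applied to the cone. We then integrate this against the law of $X_{T_D}$.
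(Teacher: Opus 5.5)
Your proposal is correct and follows essentially the same route as the paper. Both arguments use a single full-measure event, independent of $q$ and $(h_n)$, on which exterior-cone regularity forces the path to enter the open set $\overline{D}^c$ immediately after $T_D$ (the paper states this as $T_D=\inf\{t>0:X_t\notin\overline D\}$ a.s.). Both then run the same three-case analysis $T_D<q$, $T_D=q$ (handled by inspecting the endpoint), $T_D>q$, and substitute $q=L_T$ pathwise.

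Your strong Markov/Blumenthal/cone-test justification simply spells out the regularity step the paper compresses into one line. Two minor points: independence of $L_T$ from $X$ is not needed for the pathwise substitution, and what you need at the endpoint is $X_q\notin D$, i.e.\ $\partial D\cap D=\emptyset$, not $\partial D\not\subset D$.
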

\begin{proof}
By \eqref{interpolated}, $Y_t=x+\mu t+\sigma W_t=X_t$ for all $t\ge0$. Under \ref{assumption_domain}, the uniform exterior cone condition implies that every boundary point is regular for the exterior of $D$. Hence
\[
    T_D=\inf\{t>0:X_t\notin\overline D\},\qquad \pr_x\text{-a.s.}
\]
Fix a path in this a.s. event, and for notational simplicity define the set of inspection points $\pi_{h_n}^q\coloneqq \{0,h_n,2h_n,\dots, \lfloor q /h_n\rfloor h_n\}\cup\{q\}$. If $T_D<q$, then for every $\rho>0$ there is an open interval contained in $(T_D,(T_D+\rho)\wedge q)$ on which $X$ lies outside $D$. Since the largest gap between consecutive points of $\pi_{h_n}^q$ is at most $h_n$, this interval must contain a point of $\pi_{h_n}^q$ for all sufficiently large $n$. Because $\widehat{T}_{D}^{\,h_n,q}\ge T_D$, it follows that $\widehat{T}_{D}^{\,h_n,q}\to T_D$. If $T_D=q$, the endpoint $q$ belongs to $\pi_{h_n}^q$ and $X_q\notin D$, so $\widehat{T}_{D}^{\,h_n,q}=q$. Finally, if $T_D>q$, every point of $\pi_{h_n}^q$ lies in $D$, so $\widehat{T}_{D}^{\,h_n,q}=+\infty$. These three cases prove \eqref{eq:endpoint-exit-convergence} and \eqref{eq:endpoint-indicator-convergence}. Since the event used above does not depend on $q$, one may take $q=L_T$ pathwise.
\end{proof}

		\begin{theorem}\label{Thm: Gobet}
			For a time $T>0$ and a time-step $h\in(0,1)$, let {$f(Y_T)\1_{T<\widehat{T}_{D}^{\,h,T}}$} be the numerical approximation to $f(X_T)\1_{T<T_D}$, as in Subsection~\ref{ss:scheme}. Then there exist a positive constant $C=C(d,D,\Sigma,\mu)$ and a function $K(T)\coloneqq C(T+1)$
        such that:
        \begin{enumerate}[label=(\alph*)]
            \item if  \ref{assumption_domain} and  \ref{assumption_f}, then
            \begin{align*}
				\Big|\ex_x\big[f(X_T)\1_{T<T_D}-{f(Y_T)\1_{T<\widehat{T}_{D}^{\,h,T}}}\big]\Big| \leq K(T) \frac{\| f\|_{\infty}}{1\wedge \varepsilon^2}\sqrt{h},
			\end{align*}
        \item  if \ref{assumption_domain} and \ref{assumption_f_} hold, then
        \begin{align*}
				\Big|\ex_x\big[f(X_T)\1_{T<T_D}-{f(Y_T)\1_{T<\widehat{T}_{D}^{\,h,T}}}\big]\Big| \leq K(T) \| f\|_{2+\alpha,\overbar{D}}\sqrt{h}.
			\end{align*}
        \end{enumerate}
		\end{theorem}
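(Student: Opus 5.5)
We follow Gobet's decomposition, exploiting that in the Brownian-with-drift case the interpolation \eqref{interpolated} coincides with $X$ (as noted in the proof of the preceding lemma). The whole error therefore comes from paths that exit $D$ between inspection points and are back inside $D$ at the next grid point. Let $v(t,y)=\ex_y[f(X_{T-t})\1_{T-t<T_D}]=\int_D p_D(T-t,y,z)f(z)\,dz$ be the solution \eqref{feynmann-kac} of \eqref{bpde}, extended by $0$ outside $D$. Define the discrete exit time $\hat\tau=\widehat{T}_{D}^{\,h,T}$, so that $T_D\le\hat\tau$. Since $v(T,\cdot)=f$ on $D$ and $v(t,\cdot)=0$ off $D$, we have
\[
\ex_x\big[f(Y_T)\1_{T<\hat\tau}\big]-v(0,x)=\ex_x\big[v(\hat\tau\wedge T,X_{\hat\tau\wedge T})-v(T_D\wedge T,X_{T_D\wedge T})\big].
\]
On $\{T_D\ge T\}$ the two terms coincide. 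On $\{T_D<T\}$ the first term is nonzero only when $\hat\tau\wedge T$ is a grid time at which the path is back in $D$. By the strong Markov property at $T_D$, this error equals
\[
\ex_x\Big[\1_{T_D<T}\,\ex\big[v(\hat\tau\wedge T,X_{\hat\tau\wedge T})\mid\mathcal F_{T_D}\big]\Big],
\]
where, conditionally, the path starts at $X_{T_D}\in\partial D$ and is evaluated at the next inspection time $\eta(T_D)\le T_D+h$. (That inspection time is $T$ if it comes first, where $v=f$ or $0$.) The inner expectation is then bounded by a Taylor-type estimate of $v$ near the boundary.

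The key estimate is $|v(t,y)|\le C\,\dist(y,\partial D)\,M(f)$ uniformly in $t\in[0,T]$, with $M(f)=\|f\|_\infty/(1\wedge\varepsilon^2)$ in case (a) and $M(f)=\|f\|_{2+\alpha,\overbar D}$ in case (b).
\begin{itemize}
\item In case (a) it follows from \eqref{eq:1425} with $|\beta|=1$, proved in Lemma \ref{ap:l:dens-reg} with constants free of the horizon. Integrating over $z\in\supp f$ at distance $\ge2\varepsilon$ gives $\sup_{s>0}\sup_y|\nabla_y p_D(s,y,\cdot)|$ integrated against $|f|$ of order $\varepsilon^{-2}$ or better, because of the Gaussian factor with $|y-z|\ge\varepsilon$ near the boundary. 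The mean value theorem along the normal (the domain is $C^{3+\alpha}$, so $\partial D$ has a tubular neighbourhood) then gives the linear bound.
\item In case (b), the compatibility $f=Gf=0$ on $\partial D$ gives a global Schauder estimate $\|\nabla v\|_\infty\le C\|f\|_{2+\alpha}$. For the constant to be uniform in the horizon we use the semigroup contraction $\|Gv(t)\|_\infty\le\|Gf\|_\infty$, together with $v=0$ on $\partial D$ and elliptic regularity for $Gv(t)=-\partial_t v$.
\end{itemize}
Then $\ex[\dist(X_{\eta},\partial D)\1_{X_\eta\in D}\mid X_{T_D}]\le \ex|X_\eta-X_{T_D}|\le C\sqrt h$, since $\eta-T_D\le h$ and the drift contributes only $|\mu|h\le|\mu|\sqrt h$ because $h<1$. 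This bounds the error by $C M(f)\sqrt h\,\pr_x(T_D<T)$.

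This crude argument gives $\sqrt h$ but hides where the linear factor $(T+1)$ should come from. To be safe, I would instead follow Gobet's sharper sum over grid intervals:
\[
\sum_i\ex_x\big[\1_{t_{i-1}<T_D\le t_i<\hat\tau}\,|v(t_i,X_{t_i})|\big]\le CM(f)\sum_i\ex_x\big[\1_{t_{i-1}<T_D\le t_i}\,|X_{t_i}-X_{T_D}|\big].
\]
The right-hand side is at most $CM(f)\sqrt h$ times a sum of probabilities that is at most $1$. Any residual terms near $t=T$, coming from $v$'s regularity degrading as $t\to T$ in case (a), produce at worst integrals of $(T-t)^{-1/2}$ over $[0,T]$. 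Bounding these crudely by $1+T$ yields $K(T)=C(1+T)$.

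The main obstacle is the boundary-uniform gradient bound for $v$ in case (b), with constants independent of $T$. Here I would first try Lemma \ref{ap:l:dens-reg} applied to the representation $v(t)=f+\int_t^T P^D_{T-r}Gf\,dr$. This representation holds because $f\in\DD(G)$ under \ref{assumption_f_}. Bounding the gradient of the integral by $\int_0^{T}\|\nabla P^D_s\|\,ds\,\|Gf\|_\infty\le C(1+T)\|f\|_{2+\alpha}$ would explain precisely the linear-in-$T$ factor in $K(T)$.
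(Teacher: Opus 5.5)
There is a genuine gap right after you invoke the strong Markov property. You replace $\ex_x[v(\hat\tau\wedge T,X_{\hat\tau\wedge T})\mid\mathcal F_{T_D}]$ by (a bound on) $v(\eta,X_\eta)\1_{X_\eta\in D}$, where $\eta$ is the first inspection time after $T_D$. This replacement is false.

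On $\{T_D<T,\ X_\eta\in D,\ \eta<T\}$ the scheme is still alive at $\eta$, so $\hat\tau\wedge T>\eta$. By the Markov property at $\eta$, the conditional expectation given $\mathcal F_\eta$ is the \emph{discretely} killed value
\[
v(\eta,X_\eta)+\mathrm{Err}(T-\eta,X_\eta),\qquad \mathrm{Err}(q,y)\coloneqq\ex_y\bigl[f(X_q)\1_{q<\widehat{T}_{D}^{\,h,q}}-f(X_q)\1_{q<T_D}\bigr],
\]
and not $v(\eta,X_\eta)$. After one missed crossing, the path can leave and re-enter $D$ between grid points again and again before $T$, and each such crossing contributes. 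Equivalently, $t\mapsto v(t\wedge\hat\tau,X_{t\wedge\hat\tau})$ stops being a martingale once the path reaches $\partial D$, and that failure is exactly what the paper's local-time and occupation terms measure.

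So your bound $CM(f)\sqrt h\,\pr_x(T_D<T)$ controls only the first missed crossing. The same holds for the interval sum with $\1_{t_{i-1}<T_D\le t_i<\hat\tau}$, which again only looks at the first exit. Both drop $\ex_x[\1_{T_D<T}\1_{X_\eta\in D}\mathrm{Err}(T-\eta,X_\eta)]$. Your remark about residual terms near $t=T$ producing $(T-t)^{-1/2}$ corresponds to no actual term in your argument and does not close this hole. Indeed, in case (a) Lemma~\ref{lemma 3.1} shows no degeneration near $\partial D$ as $t\to T$.

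Your route can still be completed, and more simply than the paper's. Restarted at a grid point $\eta=jh<T$, the endpoint-augmented grid is exactly the grid for horizon $T-jh$, so the dropped remainder is an error of the same type. Put $\mathcal E\coloneqq\sup_{0\le j<T/h}\sup_{y\in D}|\mathrm{Err}(T-jh,y)|\le 2\|f\|_\infty$. Since $\eta-T_D\le h<1$, the cone estimate in the proof of Lemma~\ref{lemma 5.1} gives $\pr(X_\eta\notin D\mid\mathcal F_{T_D})\ge c>0$ on $\{T_D\le q\}$, with $c=c(d,D,\Sigma,\mu)$. Apply your first-crossing bound to each horizon $q=T-jh$, and bound the remainder by $\mathcal E\,\pr_y(T_D\le q,\ X_\eta\in D)\le(1-c)\mathcal E$. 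This gives
\[
\mathcal E\le CM(f)\sqrt h+(1-c)\,\mathcal E,\qquad\text{i.e.}\qquad \mathcal E\le \frac{C}{c}\,M(f)\sqrt h .
\]
Combine this with your linear bound $|v(s,z)|\le CM(f)\dist(z,\partial D)$. That bound is uniform in the horizon by Lemma~\ref{ap:l:dens-reg} in case (a), and by your elliptic argument with $\|Gv(s)\|_\infty\le\|Gf\|_\infty$ in case (b). The result is a constant independent of $T$, which a fortiori gives $K(T)=C(1+T)$.

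The paper instead treats all crossings at once, via Gobet's projection onto $\overline D$ and the It\^o--Tanaka formula. It bounds the local time of $F_1(Y)$ and the occupation integral $\int_0^T\pr_x(Y_{\varphi(t)}\in D,\ Y_t\notin D)\,dt\le c\sqrt{Th}$. That integral, together with the $T/h+1$ intervals in the localization term $E_1$, is where its factor $(1+T)$ comes from.
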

    
	\begin{proof}
		The general idea of this proof is essentially due to \cite{Gobet2000}. However, here we work just with the Brownian motion with drift in order to be able to track the explicit dependence of constants on the time horizon $T$ obtained in \cite{Gobet2000}. Hence, our proof is a bit simplified, but in the same time more delicate at some points.

        We concentrate on the proof of part~(a) under assumptions \ref{assumption_domain} and \ref{assumption_f}. Although part~(b) relies on the same stochastic decomposition, it cannot be obtained simply by replacing $\|f\|_\infty$ with $\|f\|_{2+\alpha,\overline D}$. In particular, the positive distance between $\supp(f)$ and $\partial D$ is used explicitly at several stages in the proof of part~(a).
        For this reason, we focus on part~(a), while indicating at relevant points how the argument should be adapted to cover part~(b).

        For notational convenience, throughout this proof set $\tau_h\coloneqq\widehat{T}_{D}^{\,h,T}$. Note that 
        \begin{align}
        \begin{split}\label{eq:1300}
            \ex_x\left[f(Y_T)\1_{T<{\tau_h}}\right]&=\ex_x\left[\ex_{Y_{T\wedge {\tau_h}}}[f(X_{T-T\wedge {\tau_h}})\1_{T-T\wedge {\tau_h}<T_D}]\1_{T<{\tau_h}}\right]\\
            &\hspace{-2em}=\ex_x\left[v(T\wedge {\tau_h},\,Y_{T\wedge {\tau_h}})\1_{T<{\tau_h}}\right]=\ex_x\left[v(T\wedge {\tau_h},\,Y_{T\wedge {\tau_h}})\right],
        \end{split}
        \end{align}
        where the second to last equality follows by the definition of $v$ in \eqref{feynmann-kac}, and the last line because $v$ is the solution to \eqref{bpde}. 
        Thus, we have
        \begin{align}\label{eq: error}
            \ex_x\left[f\left(Y_T\right)\1_{T<{\tau_h}}-f\left(X_T\right)\1_{T<T_D}\right] = \ex_x\left[v(T\wedge {\tau_h},\,Y_{T\wedge {\tau_h}})-v(0,Y_0)\right].
        \end{align}

As pointed out in \cite[Remark 2.1]{Gobet2000}, the spatial derivatives of $v$ have jumps at the boundary so the Brownian motion $(Y_t)_{0\le t< {\tau_h}}$ probably crosses $\partial D$. Therefore, although one might be tempted to apply the It\^o formula in \eqref{eq: error}, it cannot be applied directly. In other words, we need to tweak the process  $(Y_t)_{0\le t< {\tau_h}}$ a bit so we can use It\^o-like formula on it. This will be done by using \cite[Property 3.1]{Gobet2000} and the corresponding projection of processes to $\overline D$ defined therein. Before we bring the details, let us first rewrite \eqref{eq: error} a bit more.

Define the stopping time $T_R\coloneqq\inf\{t>0:\,Y_t\notin D(R)\}$, where $R>0$ and $D(R)=\{y \in \R^d:\dist(y,D)<R\}$. 
By a similar  argument as in \eqref{eq:1300}, we have
\begin{align}
    \ex_x&\left[f(Y_T)\1_{T<{\tau_h}}\right]=\ex_x\left[f(Y_T)\big(\1_{T<{\tau_h}\wedge T_R}+\1_{T_R<T<{\tau_h}}\big)\right]\nonumber\\
    &=\ex_x\left[v(T\wedge {\tau_h}\wedge T_R,\,Y_{T\wedge {\tau_h}\wedge T_R})\1_{T<{\tau_h}\wedge T_R}\right]+\ex_x\left[f(Y_T)\1_{T_R<T<{\tau_h}}\right]\nonumber\\
    &=\ex_x\left[v(T\wedge {\tau_h}\wedge T_R,\,Y_{T\wedge {\tau_h}\wedge T_R})\right]+\ex_x\left[f(Y_T)\1_{T_R<T<{\tau_h}}\right].\label{eq:1315}
\end{align}
By using \eqref{eq:1315}, and for fixed $\delta>0$ by adding and subtracting $v((T-\delta)\wedge {\tau_h}\wedge T_R,\,Y_{(T-\delta)\wedge {\tau_h}\wedge T_R})$ in \eqref{eq: error},  we get
\begin{align*}
     \ex_x\left[f\left(Y_T\right)\1_{T<{\tau_h}}-f\left(X_T\right)\1_{T<T_D}\right] = E_1 + E_2 + E_3,
\end{align*}
where
\begin{align*}
    E_1 &= \ex_x\left[ f(Y_T)\1_{T_R<T<{\tau_h}} \right],\\
    E_2 &= \ex_x\left[v(T\wedge {\tau_h}\wedge T_R,\,Y_{T\wedge {\tau_h}\wedge T_R}) -v((T-\delta)\wedge {\tau_h}\wedge T_R,\,Y_{(T-\delta)\wedge {\tau_h}\wedge T_R})\right],\\
    E_3 &= \ex_x\left[v((T-\delta)\wedge {\tau_h}\wedge T_R,\,Y_{(T-\delta)\wedge {\tau_h}\wedge T_R})-v(0,Y_0)\right].
\end{align*}
Notice that, $\lim_{\delta\to0}E_2=0$ because of the continuity of $Y_{\cdot\wedge {\tau_h}\wedge T_R}$ and the dominated convergence theorem.

For $E_1$, it is easy to see that
\begin{align*}
    |E_1| &= \left|\ex_x\left[ f(Y_T)\1_{T_R<T<{\tau_h}} \right]\right| \leq \|f\|_\infty \pr_x(T_R<T<{\tau_h}).
\end{align*}
Let $\mathcal I_h^T$ denote the collection of intervals $(a,b]$ determined by consecutive distinct points of the mesh $\pi_h^T=\{0,h,2h,\dots,\lfloor T/h\rfloor h\}\cup\{T\}$. Thus, every $(a,b]\in\mathcal I_h^T$ satisfies $0<b-a\le h$, and
\begin{align*}
    |\mathcal I_h^T|\le  \left\lfloor\frac{T}{h}\right\rfloor+1\le \frac{T}{h}+1.
\end{align*}
On the event $\{T_R<T<\tau_h\}$, all mesh values up to $T$, including $Y_T$, lie in $D$. Hence, if $T_R\in(a,b]$, for some $(a,b]\in\mathcal I_h^T$, then $Y_a\in D$ and the interpolated process travels a distance at least $R$ during $(a,b]$. Lemma~\ref{lemma 4.1} therefore gives
\begin{align}\label{E1}
    |E_1|
    &\leq \|f\|_\infty\sum_{(a,b]\in\mathcal I_h^T}
    \pr_x\left(\sup_{s\in(a,b]}|Y_a-Y_s|\ge R\right)\leq 2d\|f\|_\infty\sum_{(a,b]\in\mathcal I_h^T}
    \exp\left(4C|\mu|^2(b-a)-\frac{CR^2}{b-a}\right)\notag\\
    &\leq 2d\|f\|_\infty\left(\frac{T}{h}+1\right)
    \exp\left(4C|\mu|^2h-\frac{CR^2}{h}\right)\leq C(T+1)\|f\|_\infty\sqrt h.
\end{align}
In the last step we used $h\in(0,1)$ and the fact that $h^{-1}\exp(-CR^2/h)\le C_R\sqrt h$. Thus, after absorbing the localization radius $R$ into the constant, we obtain
\begin{align}\label{E1-new}
    |E_1| &\leq C(T+1)\|f\|_\infty\sqrt h.
\end{align}
Note that the term $E_1$ actually vanishes exponentially with respect to $h$, but since the final rate of the theorem is of order $\sqrt{h}$, we keep the estimate as in \eqref{E1-new}.
Also, under \ref{assumption_f_}, the same estimate \eqref{E1-new} holds with $\|f\|_\infty$ bounded by $\|f\|_{2+\alpha,\overbar D}$. 

We are left to deal with $E_3$. Here, we use the regularity of $D$, i.e. the assumption \ref{assumption_domain}, and evoke \cite[Property 3.1]{Gobet2000} (refer to \cite[page 381 -- 384]{Gilbarg1977} for details).
\begin{property} [{Property of $C^{3+\alpha}$ domains}]\label{property 3.1}
    Under \ref{assumption_domain}, the domain $D$ enjoys the following properties: There exists $R>0$ such that for $D$ and
    \begin{align}
    V_{\partial D}(R)&\coloneqq\{z\in\R^d:\,\dist(z,\partial D)\leq R\},\\
    D(R)&\coloneqq\{z\in\R^d:\,\dist(z,D)<R\},\label{D(R)}
\end{align}
    it holds that
    \begin{itemize}
        \item[(i)] (Local diffeomorphism). For all $s\in\partial D$, there are two open bounded sets $U^s$ and $V^s$, a {$C^{3+\alpha}$}-diffeomorphism $F^s$ from $U^s$ into $(-2R,2R)\times V^s$, such that
        \begin{align*}
            \qquad\qquad F^s:
            \begin{cases}
                U^s\subset\R^d \longrightarrow (-2R,2R)\times V^s\subset\R\times\R^{d-1}, \\
                x\mapsto(z_1,z)\coloneqq(z_1,z_2,\dots,z_d)\ \text{such that}\ x=g^s(z) + z_1n(g^s(z)),
            \end{cases}
        \end{align*}
        where $g^s$ is a mapping of $\partial D$ in a neighborhood of $s$. Denote by $G^s \coloneqq (F^s)^{-1}$.
        \item[(ii)] (Distance to $\partial D$). Let $s\in\partial D$. On $U^s$, the first coordinate of $F^s$, the function $F_1^s(\cdot)$, is the algebraic distance to $\partial D$; thus, it does not depend on $s$ and we denote it by $F_1$. In other words, $|F_1(x)|=\dist(x,\partial D)$ and $F_1(x)>0$ (resp. $F_1(x)<0$) if $x\in D\cap U^s$ (resp. $x\in \overbar{D}^c\cap U^s$). It is a {$C^{3+\alpha}$} function on $\cup_{s\in\partial D}U^s=V_{\partial D}(2R)$, which we extend into a $C_b^{{3+\alpha}}(\R^d, \R)$ function, with the conditions $F_1(\cdot)>0$ on $D$ and $F_1(\cdot)<0$ on $\overbar{D}^c$. Note that $\partial D=\{x\in\R^d:\, F_1(x)=0\}$.
        \item[(iii)] (Orthogonal projection on $\overbar{D}$). Let $s\in\partial D$. For $x\in U^s$, the orthogonal projection on $\overbar{D}$ of $x$ is uniquely defined by
        \begin{align}
            \text{Proj}_{\overbar{D}}(x) = G^s([F_1(x)]^+,F_2^s(x),\dots,F_d^s(x)).
            \label{projection}
        \end{align}
    \end{itemize}

    Since $\partial D$ is compact, one can find a finite collection of points $s_i$, ${1\leq i\leq k}$, in $\partial D$ (to which we associate $G^i, F^i, U^i$ and $V^i$, respectively) such that
    $V_{\partial D}(3R/2)\subset\bigcup_{1\leq i\leq k} U^i$.
    Let $U^0$ be an open set satisfying $\dist(\partial D, \overbar{U^0})>0$ and such that
    $D(3R/2)\subset \bigcup_{0\leq i\leq k} U^i$.
    We then build a partition of unity subordinate to the covering $U^i$, ${0\leq i\leq k}$, namely a family of non-negative $C_b^\infty$ functions $\phi^i$, ${0\leq i\leq k}$, such that $\supp(\phi^i)\subset U^i$ and
    $\sum_{i=0}^k \phi^i=1$ on $\overbar{D(R)}$.
    Up to this point, the functions $F^i$ (resp.\ $G^i$) are only defined on $U^i$ (resp.\ $(-2R,2R)\times V^i$), and we extend them to smooth functions on $\R^d$.
\end{property}

From now on, we fix $R>0$ such that Property \ref{property 3.1} holds. The Property \ref{property 3.1} implies two important consequences. First, by \cite[Proposition 3.1]{Gobet2000} every continuous semimartingale, say $\wt Z_t$, $t\ge0$, living in $\overline{D(R)}$ may be orthogonally projected on $\overline D$, by using the transformation $t\mapsto \text{Proj}_{\overbar{D}}\wt Z_{t}$, so that the projected process still remains a continuous semimartingale. Second, by \cite[Corollary 3.1]{Gobet2000}, the process $t\mapsto v(t,\text{Proj}_{\overbar{D}}\wt Z_{t})$ is a continuous semimartingale as well. Both of these new semimartingales have useful decompositions as we show below. 

We return to dealing with $E_3$, and use \cite[Proposition 3.1]{Gobet2000} on the semimartingale $(Y_{t\wedge T_R\wedge {\tau_h}},{t\ge0})$ to define a new semimartingale $Z=(Z_t,\, t\ge0)$ by $Z_t\coloneqq\text{Proj}_{\overbar{D}}Y_{t\wedge T_R\wedge {\tau_h}}$. In particular, \cite[Proposition 3.1]{Gobet2000} yields
\begin{align*}
    dZ_t = \1_{Y_t\in D}dY_t + \1_{Y_t\notin D}dY_t^{\partial D} + \frac{1}{2}n(Y_t)dL_t^0(F_1(Y)), \quad t\le T_R\wedge {\tau_h}.
\end{align*}
Here, $n(x)$ denotes the unit inward pointing normal at $x$, $L_t^0(F_1(Y))$ is the one-dimensional local time of the continuous semi-martingale $F_1(Y)$ at time $t$ and level $0$, and  $Y_t^{\partial D}$ is the continuous semi-martingale with $Y_0^{\partial D}=0$ such that
\begin{align}
    \begin{split}\label{eq:2252a}
        \1_{Y_t\notin D}&dY_{j,t}^{\partial D} = \1_{Y_t\notin D}\sum_{i=1}^k \phi^i(Y_t)\left( \sum_{l=2}^d \partial_{z_l}G_j^i(0,F_2^i(Y_y),\dots,F_d^i(Y_y)) dF_l^i(Y_t)\right. \\
        &\qquad\left. +\frac{1}{2}\sum_{l,m=2}^d \partial_{z_l,z_m}^2 G_j^i(0,F_2^i(Y_y),\dots,F_d^i(Y_y)) d\langle F_l^i(Y_.),F_m^i(Y_.) \rangle_t \right).
    \end{split}
\end{align}
Since $F^i$'s are $C^2$ functions by Property \ref{property 3.1}, by classical It\^o's formula, it also holds for $t\le T_R\wedge {\tau_h}$
\begin{align}
    &dF_l(Y_t) = \sum_{r=1}^d \partial_{x_r}F_l(Y_t) dY_t^r + \frac{1}{2}\sum_{r,r^\prime =1}^d \partial_{x_r,x_{r^\prime}}^2F_l(Y_t) d\langle Y_.^r,Y_.^{r^\prime} \rangle_t \label{eq:2252b}\\
    &\hspace{3.3em}=\sum_{r=1}^d \mu_r\partial_{x_r}F_l(Y_t)dt + \sum_{r=1}^d \partial_{x_r}F_l(Y_t)d(\sigma B_t)^r+\frac{1}{2}\sum_{r,r^\prime =1}^d \Sigma_{r,r^\prime}\partial_{x_r,x_{r^\prime}}^2F_l(Y_t) dt, \notag\\
    &d\langle F_l(Y_.),F_m(Y_.) \rangle_t = \nabla F_l(Y_t)^T\Sigma  \nabla F_m(Y_t)^T dt, \label{eq:2252c}
\end{align}
where $Y^r$ denotes the $r$-th component of $Y$.
In the lines above, the notation $\langle\cdot ,\cdot\rangle_t$ denotes the quadratic variation process.

We also use \cite[Corollary 3.1]{Gobet2000} on the solution $v$ to $\partial_tv+Gv=0$ and the projection $Z$ of $Y$, to get
\begin{align*}
    dv(t,Z_t) &= \1_{Y_t\notin D}\left( \sum_{l=1}^d{\partial_{x_l} v}(t,Z_t) dY_{l,t}^{\partial D} + \frac{1}{2} \sum_{l,m=1}^d \partial^2_{x_l x_m}v(t,Z_t) d\langle Y_{l,.}^{\partial D},Y_{m,.}^{\partial D}\rangle_t\right)\\
    &\quad+\frac{1}{2}\sum_{l=1}^d {\partial_{x_l} v}(t,Y_t) n_l(Y_t) dL_t^0(F(Y)).
\end{align*}

Thus, from the relation above, for $E_3$ we have
\begin{align}
    E_3 &= \ex_x\left[ \int_0^{(T-\delta)\wedge {\tau_h}\wedge T_R} \frac{1}{2}\sum_{l=1}^d {\partial_{x_l} v}(t,Y_t)\, n_l(Y_t) dL_t^0(F_1(Y)) \right]\notag\\
    &\qquad+ \ex_x\left[ \int_0^{(T-\delta)\wedge {\tau_h}\wedge T_R} \sum_{l=1}^d{\partial_{x_l} v}(t,Z_t) \1_{Y_t\notin D} dY_{l,t}^{\partial D} \right.\notag\\
    &\qquad\qquad\qquad\qquad\left. +\frac{1}{2}\sum_{l,m=1}^d{\partial_{x_l x_m} v}(t,Z_t) \1_{Y_t\notin D} d\langle Y_{l_{,.}}^{\partial D}, Y_{m_{,.}}^{\partial D} \rangle_t \right]\notag.
\end{align}
Let $\varphi(t)=a$ whenever $t\in(a,b]$ for an interval $(a,b]\in\mathcal I_h^T$. Thus $\varphi(t)$ is the left endpoint of the mesh interval containing $t$, and $0<t-\varphi(t)\le h$. Using this notation, and the decomposition above, we obtain
\begin{align}
    \begin{split}\label{1554}
    |E_3|&\leq \frac{d}{2}c_1\frac{\|f\|_\infty}{1\wedge\varepsilon} \ex_x\left[L_{T\wedge {\tau_h}}^0(F_1(Y))\right] \\
        &\, + \left(dc_1\frac{\|f\|_\infty}{1\wedge\varepsilon}C_1 + \frac{d^2}{2}c_2\frac{\|f\|_\infty}{1\wedge\varepsilon^2}C_2\right) \ex_x\left[ \int_0^T \1_{Y_{\varphi(t)}\in D}\1_{Y_t\notin D} dt \right].
    \end{split}
\end{align}
Here, the derivatives of $v$ were bounded by Lemma~\ref{lemma 3.1} with its constants  $c_1=C(1,d,D,\Sigma,\mu)$ and $c_2=C(2,d,D,\Sigma,\mu)$, so the first line is clear. The second line 
and the constants $C_1=C_1(d,D,\Sigma,\mu),$ and $C_2=C_2(d,D,\Sigma,\mu)$ are obtained from \eqref{eq:2252a}--\eqref{eq:2252c} by bounding the derivatives of $F^i$'s and $G$ (which are, in particular, $C^2$ functions depending only on the geometry of $D$).

Moreover, under \ref{assumption_f_}, the second assertion of Lemma~\ref{lemma 3.1} reads
\begin{align}\label{eq:global-boundary-schauder-v}
    \sup_{0\le s<T}\sup_{x\in\overbar D}
    \bigl(|\nabla v(s,x)|+|D^2v(s,x)|\bigr)
    \le C\|f\|_{2+\alpha,\overbar D}.
\end{align}
Consequently, in \eqref{1554} the coefficients involving the first and second derivatives of $v$ are bounded directly by $C\|f\|_{2+\alpha,\overbar D}$, without using a positive distance between $\supp(f)$ and $\partial D$.

The integral term in~\eqref{1554} equals to
\begin{align*}
    \ex_x\left[ \int_0^T \1_{Y_{\varphi(t)}\in D}\1_{Y_t\notin D} dt \right] = \int_0^T \pr_x(Y_{\varphi(t)}\in D,Y_t\notin D) dt,
\end{align*}
and the integrand can be bounded as follows. If $\varphi(t)=0$, then $\pr_x(Y_t\notin D)\leq 1 \leq \sqrt{\frac{h}{t}}$ for $t < h$. Otherwise, the Euler scheme must travel at least the distance $\dist(Y_{\varphi(t)},\partial D)$ within the time interval $(\varphi(t),t]$. This, together with the Markov inequality, implies
\begin{align*}
    \pr_x(Y_{\varphi(t)}\in D,Y_t\notin D) &\leq \ex_x\left[\1_{Y_{\varphi(t)}\in D}\pr_x\left(\sup_{s\in(\varphi(t),t]}|Y_{\varphi(t)}-Y_s|\geq \dist(Y_{\varphi(t)},\partial D)\right) \right]\\
    &\leq 2d\,e^{c_3|\mu|^2 h}\, \ex_x\left[\1_{Y_{\varphi(t)}\in D} e^{-c_3 \frac{\dist^2(Y_{\varphi(t)},\partial D)}{t-\varphi(t)}} \right],
\end{align*}
where the last inequality follows from Lemma \ref{lemma 4.1} with $c_3$ as its constants, and $t-\varphi(t)\leq h$. Further, by exploiting the fact that the law of the interpolated Euler scheme and the law of the drifted Brownian motion coincide, using Lemma \ref{lemma bounds transition density}, we get that there exist constants  $c_4(d,D,\Sigma,\mu)$ and $c_5(d,D,\Sigma,\mu)$ such that 
\begin{align}\label{1338integral}
    \pr_x(Y_{\varphi(t)}\in D,Y_t\notin D)
    &\leq \frac{c_4}{\varphi(t)^{d/2}} \int_D {e^{-c_5\frac{|x-y|^2}{\varphi(t)}}} e^{-c_5\frac{\dist^2(y,\partial D)}{t-\varphi(t)} }dy.
\end{align}
By using the flattening of the boundary since $D$ satisfies \ref{assumption_domain}, we get
\begin{align*}
    \pr_x(Y_{\varphi(t)}\in D,Y_t\notin D)&\le \frac{c_6}{\varphi(t)^{d/2}}\int_{\R^{d-1}} e^{-c_5\frac{|x-y|^2}{\varphi(t)}} dy \int_0^{+\infty} e^{-c_5\frac{|x_1-y_1|^2}{\varphi(t)}-c_5\frac{y_1^2}{t-\varphi(t)}} dy_1\\
    &\le \frac{c_7}{\sqrt{\varphi(t)}}\int_{-\infty}^{+\infty} e^{-c_5\frac{|x_1-y_1|^2}{\varphi(t)}-c_5\frac{y_1^2}{t-\varphi(t)}} dy_1\le c_8\sqrt{\frac{t-\varphi(t)}{t}},
\end{align*}
where $c_8=c_8(d,D,\Sigma,\mu)$.
Since $t-\varphi(t)\leq h$, we conclude that there is a constant $c=c(d,D,\Sigma,\mu)$ such that
\begin{align}\label{boundE3second}
    \ex_x\left[ \int_0^T \1_{Y_{\varphi(t)}\in D}\1_{Y_t\notin D} dt \right]
    &\leq c\sqrt{T} \sqrt{h}.
\end{align}

The last term to be addressed in \eqref{1554} is the one involving the local time. Note that Tanaka's formula \cite[p. 222]{revuz1999} for $F_1(Y_{T\wedge {\tau_h}})$ yields
\begin{align*}
    \frac{1}{2}L_{T\wedge {\tau_h}}^0(F_1(Y)) &= (F_1(Y_{T\wedge {\tau_h}}))^- - (F_1(x))^- + \int_0^{T\wedge {\tau_h}}\1_{F_1(Y_t)\leq0} dF_1(Y_t)\\
    &= (F_1(Y_{T\wedge {\tau_h}}))^- + \int_0^{T\wedge {\tau_h}}\1_{F_1(Y_t)\leq0} dF_1(Y_t),
\end{align*}
 as $(F_1(x))^-=0$ since $x\in D$, where we write $F_1(\cdot)^-\coloneqq-\min(F_1(\cdot),0)$.

Applying the It\^o formula to $F_1(Y_t)$, along with the fact that $F_1(Y_t)$'s derivatives are finite by Property \ref{property 3.1}, enables us to write
\begin{align}\label{eq:2329}
    \frac{1}{2}\ex_x\left[ L_{T\wedge {\tau_h}}^0(F_1(Y)) \right] &= \ex_x\left[ (F_1(Y_{T\wedge {\tau_h}}))^- \right] + C\ex_x\left[\int_0^{T\wedge {\tau_h}}\1_{F_1(Y_t)\leq0} dt \right],
\end{align}
where
$C=C(d,D,\Sigma,\mu)$.
Also, note that:
\begin{align*}
    \ex_x\left[\int_0^{T\wedge {\tau_h}}\1_{F_1(Y_t)\leq0} dt \right] &\leq \ex_x\left[\int_0^T \1_{F_1(Y_t)\leq0}\1_{F_1(Y_{\varphi(t)})>0} dt \right]\\
    &\qquad= \ex_x\left[ \int_0^T \1_{Y_{\varphi(t)}\in D}\1_{Y_t\notin D} dt \right]\le c\sqrt{T} \sqrt{h},
\end{align*}
as in \eqref{boundE3second}, so the second term in \eqref{eq:2329} is appropriately bounded.
Regarding the first term in \eqref{eq:2329}, we can proceed in the same spirit as in \cite[Eqs. (63)--(65)]{Gobet2000}. Indeed, for each $(a,b]\in\mathcal I_h^T$, put $u=b-a$, so that $0<u\le h$. Since the endpoint $T$ belongs to $\pi_h^T$, if no discrete exit occurs by time $T$, then $Y_T\in D$ and $(F_1(Y_T))^-=0$. Therefore,
\begin{align}\label{eq:63}
    \ex_x\left[ (F_1(Y_{T\wedge\tau_h}))^- \right]
    &= \sum_{(a,b]\in\mathcal I_h^T}
    \ex_x\left[\1_{\{b=\tau_h\}}(F_1(Y_b))^-\right].
\end{align}
Moreover, $\{b=\tau_h\}=\{a<\tau_h\}\cap\{Y_b\notin D\}$, and the Markov property gives
\begin{align}\label{eq: 64}
    \ex_x\left[\1_{\{b=\tau_h\}}(F_1(Y_b))^-\right]
    = \ex_x\left[\1_{\{a<\tau_h\}}\ex_{Y_a}\left[(F_1(Y_u))^-\right]\right].
\end{align}
Since $F_1(Y_{T_D})=0$, the It\^o estimate for the increments of $F_1(Y)$ and $\ex|B_t-B_s|=\sqrt{2(t-s)/\pi}$ yield, uniformly in $z\in\overbar D$,
\begin{align*}
    \ex_z\left[(F_1(Y_u))^-\right]
    &=\ex_z\left[\1_{\{T_D<u\}}\bigl((F_1(Y_u))^--(F_1(Y_{T_D}))^-\bigr)\right]\\
    &\le C\sqrt u\,\pr_z(T_D<u).
\end{align*}
Applying Lemma~\ref{lemma 5.1} and using $u\le h$ gives
\begin{align}\label{eq: constant_R}
    \ex_z\left[(F_1(Y_u))^-\right]
    \le C\sqrt h\,\pr_z(Y_u\notin D).
\end{align}
Substituting this estimate in \eqref{eq: 64}, we obtain
\begin{align*}
    \ex_x\left[\1_{\{b=\tau_h\}}(F_1(Y_b))^-\right]
    &\le C\sqrt h\,\pr_x(a<\tau_h,\,Y_b\notin D)\\
    &=C\sqrt h\,\pr_x(b=\tau_h).
\end{align*}
Summing over all consecutive mesh intervals, including the possibly shorter terminal interval, yields
\begin{align}
    \ex_x\left[ (F_1(Y_{T\wedge\tau_h}))^- \right]
    &\le C\sqrt h\sum_{(a,b]\in\mathcal I_h^T}\pr_x(b=\tau_h)\\
    &=C\sqrt h\,\pr_x(\tau_h\le T)\le C\sqrt h.
\end{align}
Consequently,
\begin{align}\label{eq:1455}
     \frac{1}{2}\ex_x\left[ L_{T\wedge\tau_h}^0(F_1(Y)) \right]
     &\le C(\sqrt{T}\sqrt{h}+\sqrt{h}).
\end{align}

Therefore, by implementing the bounds \eqref{boundE3second} and \eqref{eq:1455} into \eqref{1554}, we get
\begin{align}\label{E3}
    |E_3| &\leq \wt c_1\frac{\|f\|_\infty}{1\wedge\varepsilon}(\sqrt{T}\sqrt{h}+\sqrt{h}) + \wt c_2\frac{\|f\|_\infty}{1\wedge\varepsilon^2}\sqrt{T}\sqrt{h},
\end{align}
for $\wt c_1=\wt c_1(d,D,\Sigma,\mu)$ and $\wt c_2=\wt c_2(d,D,\Sigma,\mu)$.

Under \ref{assumption_f_}, using \eqref{eq:global-boundary-schauder-v} in place of the support-dependent derivative bounds, gives by the same occupation-time and local-time estimates,
\begin{align*}
    |E_3|\le C(\sqrt T+1)\|f\|_{2+\alpha,\overbar D}\sqrt h.
\end{align*}

 {Combining \eqref{E1-new} and \eqref{E3} proves part~(a). Combining \eqref{E1-new}, \eqref{eq:global-boundary-schauder-v}, and the preceding bound for $E_3$ proves part~(b).}

    \end{proof}

\begin{remark}
      When the domain is convex, the orthogonal projection, which is one of the main ingredients of the proof of the previous theorem, is well-defined on the whole space, i.e. we may take $R=+\infty$. This means that, in the case of the convex $D$, the term $E_1$ is irrelevant, and other computations inside the proof simplify a bit. We note that the constant of Theorem \ref{Thm: Gobet} can be tracked almost perfectly form line to line, since almost all of them include classical computations with Gaussian density, or some elementary observations. However, for obtaining a truly explicit constant, one also needs to know some geometric quantities of $D$, e.g. the step from \eqref{1338integral} to \eqref{boundE3second} which includes the flattening of the boundary and highly connected Lemma \ref{lemma 5.1} which uses the exterior cone condition property. Also, more delicately, obtaining the step \eqref{1554} includes rather general constant $K(d,\Sigma)$ from \cite[Chapter 3, Theorem 5]{Friedman1964} (see Lemma \ref{ap:l:dens-reg}). In other words, these steps rely on the geometric properties of $D$, hence the constants are not entirely explicit even for a (general) convex $D$.

\end{remark}

	\subsection{Sampling killed anomalous diffusion}
    With the explicit dependence on $T$ of the bound provided by Theorem \ref{Thm: Gobet}, we can evaluate the error induced by sampling {$f(Y_{L_T})\1_{L_T<\widehat{T}_{D}^{\,h,L_T}}$} with Algorithm \ref{alg: 1}.
    
	\begin{theorem}\label{Thm: our}
        {Assume~\ref{assumption_domain}. Let $L=(L_t,\,t\geq0)$ be the inverse of a subordinator with its Laplace exponent $\phi$ as in \eqref{eq:bernstein}. For a fixed time $T>0$ and a time-step $h\in(0,1)$, let {$f(Y_{L_T})\1_{L_T<\widehat{T}_{D}^{\,h,L_T}}$} be the numerical approximation to $f(X_{L_T})\1_{L_T<T_D}$, constructed as in Subsection~\ref{ss:scheme}. Define $K_\phi(T)\coloneqq C( 1+ e/\phi(1/T) )$, where $C=C(d,D,\Sigma,\mu)$ is the positive constant appearing in Theorem \ref{Thm: Gobet}.
        \begin{enumerate}[label=(\alph*)]
            \item Under \ref{assumption_f}, it holds that
            \begin{align*}
                \Big|\ex_x\big[f(X_{L_T})\1_{L_T<T_D}-{f(Y_{L_T})\1_{L_T<\widehat{T}_{D}^{\,h,L_T}}}\big]\Big| \leq K_\phi(T) \frac{\| f\|_{\infty}}{1\wedge \varepsilon^2}\sqrt{h}.
		      \end{align*}
        \item  Under \ref{assumption_f_}, it holds that
        \begin{align*}
			\Big|\ex_x\big[f(X_{L_T})\1_{L_T<T_D}-{f(Y_{L_T})\1_{L_T<\widehat{T}_{D}^{\,h,L_T}}}\big]\Big| \leq K_\phi(T) \| f\|_{2+\alpha,\overbar{D}}\sqrt{h}.
		\end{align*}
        \end{enumerate}
        }
	\end{theorem}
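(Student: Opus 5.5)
The plan is to condition on the random horizon $L_T$ and feed the deterministic-horizon estimate of Theorem~\ref{Thm: Gobet} into the identity \eqref{Error}. By independence of $S$ (hence of $L$) from the Brownian motion $W$ driving both $X$ and the Euler scheme $Y$, we have
\begin{align*}
\ex_x\big[f(X_{L_T})\1_{L_T<T_D}-f(Y_{L_T})\1_{L_T<\widehat{T}_{D}^{\,h,L_T}}\big]
=\int_0^\infty \ex_x\big[f(X_s)\1_{s<T_D}-f(Y_s)\1_{s<\widehat{T}_{D}^{\,h,s}}\big]\,\pr_x(L_T\in ds).
\end{align*}
To justify this I would write the joint law as a product measure and apply Fubini. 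The integrand is jointly measurable in $(s,\omega)$: the map $s\mapsto \widehat{T}_{D}^{\,h,s}$ is built from finitely many grid points together with the endpoint $s$, and $Y$ has continuous paths. The integrand is also bounded by $2\|f\|_\infty$. Note that for each fixed $s$ the grid in \eqref{eq:endpoint-exit} is exactly the one used in Theorem~\ref{Thm: Gobet} with horizon $s$, including the possibly shorter terminal interval, so the theorem applies verbatim for every $s>0$. The atom at $s=0$ is harmless: $\pr(L_T=0)=0$ since $S$ is strictly increasing, and in any case both terms coincide there.

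Next I would bound the absolute value by moving it inside the integral. Theorem~\ref{Thm: Gobet}(a) gives the integrand bound $C(1+s)\frac{\|f\|_\infty}{1\wedge\varepsilon^2}\sqrt h$, and part (b) gives $C(1+s)\|f\|_{2+\alpha,\overbar D}\sqrt h$. The key point is that $C=C(d,D,\Sigma,\mu)$ does not depend on $s$. Integrating against the law of $L_T$ then yields
\begin{align*}
C\big(1+\ex L_T\big)A(f)\sqrt h ,
\end{align*}
where $A(f)$ denotes the relevant norm in each case.

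Finally I would invoke the moment bound $\phi(1/T)\,\ex L_T\le e$ from \cite[Chapter III, Proposition 1]{bertoin1996}, recalled in Section~\ref{s1} (it is also the case $k=1$ of Lemma~\ref{lemma: L_bound}). This gives $1+\ex L_T\le 1+e/\phi(1/T)$, so the constant is exactly $K_\phi(T)$. Since $\phi(1/T)>0$ for a non-zero Bernstein function, $K_\phi(T)$ is finite.

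The only point needing care is the measurability and Fubini step. In particular, one must make sure that the endpoint-augmented exit time with random horizon $q=L_T$ equals the deterministic-horizon object evaluated at $s=L_T$. The lemma preceding Theorem~\ref{Thm: Gobet} already handles the analogous pathwise issue. Everything else follows directly from the linear-in-$s$ constant of Theorem~\ref{Thm: Gobet}.
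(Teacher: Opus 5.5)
Your proposal is correct and follows the same route as the paper. It conditions on $L_T$ via \eqref{Error}, applies Theorem~\ref{Thm: Gobet} with its horizon-independent constant to get $C(1+\ex L_T)$, and concludes with $\ex L_T\le e/\phi(1/T)$. Your attention to measurability and Fubini spells out what the paper calls ``a simple argument''. One minor aside: $L_T>0$ a.s.\ for $T>0$ follows from right-continuity of $S$ at $0$, not from strict monotonicity.
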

	\begin{proof}
		In light of \eqref{Error} and Theorem \ref{Thm: Gobet}, under e.g. \ref{assumption_f}, it follows  that 
        \begin{align*}
            \left|\ex_x  \left[{f\left(Y_{L_T}\right)\1_{L_T<\widehat{T}_{D}^{\,h,L_T}}} -f\left(X_{L_T}\right)\1_{L_T<T_D}\right] \right| \leq C(\ex L_T+1)\frac{\| f\|_{\infty}}{1\wedge \varepsilon^2} \sqrt{h},
        \end{align*}
        for all $T>0$ and $h\in (0,1)$. Further, by \cite[Chapter III, Proposition 1]{bertoin1996} or Lemma~\ref{lemma: L_bound} below, we also have
        \begin{align*}
            \ex L_T \leq e/\phi(1/T).
        \end{align*}
	\end{proof}

	\subsection{Monte-Carlo analysis and statistical error}

    In this section, we provide an upper bound for the $L^2$ error of a Monte Carlo estimator of the function $u(t,x)$, defined in \eqref{feynmann-kac_subordinated}. Here, the Monte Carlo estimator of $u(t,x)$ is denoted by $u_N^h(t)$ and given by
    \begin{align}\label{MCestimator_}
        u_N^h(t) \coloneqq \frac{1}{N}\sum_{k=1}^N Z_h^k,
    \end{align}
    where the superscript $k$ denotes the $k$-th independent copy of the approximation ${Z_h \coloneqq f\left(Y_{L_t}\right)\1_{L_t<\widehat{T}_{D}^{\,h,L_t}}}$. In $u_N^h(t)$, we recall that the dependence on $x$ is hidden in the underlying probability measure $\mathds{P}_x$. In the calculations that follow, the theoretical value is denoted by $Z \coloneqq f(\chi_T)\1_{T<\tau_D}=f\left(X_{L_t}\right)\1_{L_t<T_D}$.  Moreover, throughout the section, we always assume that the domain $D$ satisfies \ref{assumption_domain} and that the function $f$ satisfies either \ref{assumption_f} or \ref{assumption_f_}.
    
    First we show, in the theorem below, that the approximation error vanishes in $L^2$ as $h\to0$ and $N\to+\infty$ jointly. Subsequently, we proceed to consider $h$ as a function of $N$ and prove the central limit theorem.

    \begin{theorem}\label{thm: l2error}
        There exist a positive constant $\mathcal{C}=\mathcal{C}(d,D,\Sigma,\mu,\phi)$ such that for all $x\in D$, $h\in(0,1)$, and $N\in \N$, it holds that:
        \begin{enumerate}[label=(\alph*)]
            \item under \ref{assumption_domain} and if $f$ satisfies \ref{assumption_f}, then
            \begin{align*}
			\ex_x\left(u_N^h(t)-u(t,x)\right)^2 \leq \frac{\|f\|^2_\infty }{N} + \mathcal{C}  \frac{\|f\|^2_\infty}{1\wedge \varepsilon^4} h;
		\end{align*}
        \item  under \ref{assumption_domain} and if $f$ satisfies \ref{assumption_f_}, then
        \begin{align*}
			\ex_x\left(u_N^h(t)-u(t,x)\right)^2 \leq \frac{\|f\|^2_\infty }{N} + \mathcal{C}  \|f\|^2_{2+\alpha,\overline D}\, h.
		\end{align*}
        \end{enumerate}
    \end{theorem}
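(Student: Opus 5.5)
The plan is the standard bias–variance decomposition of the mean-square error. Write $u(t,x)=\ex_x Z$, where $Z=f(X_{L_t})\1_{L_t<T_D}$, and let $Z_h^1,\dots,Z_h^N$ be i.i.d.\ copies of $Z_h$ under $\pr_x$. Then
\begin{align*}
    \ex_x\left(u_N^h(t)-u(t,x)\right)^2
    = \ex_x\left(u_N^h(t)-\ex_x Z_h\right)^2 + \left(\ex_x Z_h-\ex_x Z\right)^2 .
\end{align*}
This holds because the cross term vanishes: $\ex_x u_N^h(t)=\ex_x Z_h$, and $\ex_x Z_h-\ex_x Z$ is deterministic. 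The first summand is the statistical error and the second is the squared weak bias.

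\emph{Variance term.} By independence of the copies, the first summand equals $\var(Z_h)/N$. Since $|Z_h|\le \|f\|_\infty$ almost surely, we have $\var(Z_h)\le \ex_x Z_h^2\le \|f\|_\infty^2$. This gives the term $\|f\|_\infty^2/N$ in both (a) and (b). In case (b), $f\in C^{2+\alpha}(\overbar D)$ is bounded, so $\|f\|_\infty$ is finite. The indicator kills all values outside $D$, and the mesh points before killing all lie in $D$, so only values of $f$ on $\overbar D$ enter.

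\emph{Bias term.} Here I invoke Theorem~\ref{Thm: our} directly.
\begin{itemize}
    \item Under \ref{assumption_f}, $|\ex_x Z_h-\ex_x Z|\le K_\phi(t)\,\|f\|_\infty(1\wedge\varepsilon^2)^{-1}\sqrt h$. Squaring gives $K_\phi(t)^2\|f\|_\infty^2(1\wedge\varepsilon^4)^{-1}h$, using $(1\wedge\varepsilon^2)^2=1\wedge\varepsilon^4$.
    \item Under \ref{assumption_f_}, squaring gives $K_\phi(t)^2\|f\|_{2+\alpha,\overline D}^2\,h$.
\end{itemize}
Setting $\mathcal C\coloneqq K_\phi(t)^2=C^2(1+e/\phi(1/t))^2$ then yields both claims. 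This constant depends on $(d,D,\Sigma,\mu)$ through $C$, and on $\phi$ together with the fixed time $t$ through the factor $e/\phi(1/t)$. That matches the stated dependence $\mathcal C(d,D,\Sigma,\mu,\phi)$, with $t$ regarded as fixed.

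\emph{Main point to check.} There is no real obstacle, since all the analytic work sits in Theorem~\ref{Thm: our}. Two details need care. First, the Monte Carlo copies must be genuinely independent, each with its own $L_t$ and its own Brownian path, so that the variance of the average is exactly $\var(Z_h)/N$. Second, the bias bound is uniform in $x\in D$, which holds because $C$ in Theorem~\ref{Thm: Gobet} does not depend on $x$.
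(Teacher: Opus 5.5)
Your proposal is correct and is essentially the paper's proof: the same bias--variance split $\ex_x(u_N^h(t)-u(t,x))^2=\frac1N\var_x Z_h+(\ex_x(Z-Z_h))^2$, the trivial bound $\var_x Z_h\le\|f\|_\infty^2$, and the square of Theorem~\ref{Thm: our} for the bias, which gives $\mathcal C=K_\phi(t)^2$. Your extra remarks are accurate details that the paper leaves implicit: the bound is uniform in $x$, $(1\wedge\varepsilon^2)^2=1\wedge\varepsilon^4$, and $f$ is only evaluated in $D$ because the endpoint $L_t$ is inspected.
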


    \begin{proof}
       {Since {$Z_h = f\left(Y_{L_t}\right)\1_{L_t<\widehat{T}_{D}^{\,h,L_t}}$} and $Z = f\left(X_{L_t}\right)\1_{L_t<T_D}$,  we have $\ex_x Z_h^2\le \|f\|^2_\infty<+\infty$, as well as, $\ex_x Z^2\le \|f\|^2_\infty < +\infty$.} 
        Therefore, by elementary manipulations,   
        \begin{align*}
            \ex_x\left(u_N^h(t)-u(t,x)\right)^2 = \frac{1}{N}\var_x Z_h + \left(\ex_x(Z-Z_h)\right)^2.
        \end{align*}
        {The first term above is trivially bounded by $\frac{\|f\|^2_\infty}{N}$, while 
        Theorem \ref{Thm: our} provides an upper bound for the second term.}
\end{proof}

Theorem~\ref{Thm: our} implies the following central limit theorem for the error of the Monte Carlo estimator $u_N^h(t)$ of $u(t,x)$.

\begin{theorem}\label{CLT}
    Assume \ref{assumption_domain} and that $f$ satisfies either \ref{assumption_f} or \ref{assumption_f_}, and set $h_N:=N^\delta$ for $\delta<-1$. Let
    \begin{align*}
        S_N\coloneqq\frac{\sqrt{N}\left(u_N^{{h_N}}(t)-u(t,x)\right)}{\sigma(t,x)},
    \end{align*}
    where $\sigma(t,x)^2=\var_x Z>0$. Then, for any $\psi\in C_b\left(\R\right)$, it is true that
    \begin{align*}
        \ex_x \psi(S_N) \longrightarrow\int_{\R} \psi(w) \frac{\exp(-w^2/2)}{\sqrt{2\pi}}dw,\quad\text{as}\quad N\to\infty.
    \end{align*}
\end{theorem}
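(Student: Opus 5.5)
The plan is to split $S_N$ into a centred fluctuation term and a deterministic bias term,
\begin{align*}
    S_N=\frac{\sqrt N\bigl(u_N^{h_N}(t)-\ex_x Z_{h_N}\bigr)}{\sigma(t,x)}+\frac{\sqrt N\bigl(\ex_x Z_{h_N}-u(t,x)\bigr)}{\sigma(t,x)}\eqqcolon A_N+B_N .
\end{align*}
I will show that $B_N\to0$ and that $A_N$ converges in law to a standard Gaussian. Slutsky's lemma then gives $S_N\Rightarrow\mathcal N(0,1)$, which is exactly the asserted convergence $\ex_x\psi(S_N)\to\int\psi\,d\mathcal N(0,1)$ for every $\psi\in C_b(\R)$.

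\textbf{The bias term.} By Theorem~\ref{Thm: our} (part (a) or (b), depending on the assumption on $f$),
\begin{align*}
    |B_N|\le \frac{K_\phi(t)A(f)}{\sigma(t,x)}\sqrt N\,N^{\delta/2}=\frac{K_\phi(t)A(f)}{\sigma(t,x)}\,N^{(1+\delta)/2}.
\end{align*}
Since $\delta<-1$, this tends to $0$. This is the only place the choice of $h_N$ enters.

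\textbf{The fluctuation term.} For each $N$, the summands $Z^k_{h_N}$ are i.i.d. but their law depends on $N$, so I will use a triangular-array CLT (Lindeberg--Feller). Write $\sigma_N^2\coloneqq\var_x Z_{h_N}$. The centred summands are bounded by $2\|f\|_\infty$, so the Lindeberg condition holds trivially once $\sigma_N$ stays bounded away from $0$: for large $N$ the event $\{|Z_{h_N}-\ex Z_{h_N}|>\eta\sqrt N\sigma_N\}$ is empty. It therefore remains to show $\sigma_N^2\to\sigma(t,x)^2$. Then $A_N=(\sigma_N/\sigma)\cdot\sqrt N(u_N^{h_N}-\ex Z_{h_N})/\sigma_N$ also converges to $\mathcal N(0,1)$ by Slutsky.

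\textbf{Main obstacle: the variance convergence.} I expect $\sigma_N^2\to\sigma(t,x)^2$ to be the main obstacle. The first moments converge by Theorem~\ref{Thm: our}, so it suffices to show $\ex_x Z_{h_N}^2\to\ex_x Z^2$. I would take the pathwise route. The interpolated scheme coincides with $X$ (so $Y_{L_t}=X_{L_t}$), and by the convergence lemma (equation \eqref{eq:endpoint-indicator-convergence} with $q=L_t$), $\1_{L_t<\widehat T_D^{h_N,L_t}}\to\1_{L_t<T_D}$ almost surely. Hence $Z_{h_N}\to Z$ almost surely, and bounded convergence gives convergence of second moments.

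One caveat remains under \ref{assumption_f}, where $f$ is only bounded and measurable. There $f(Y_{L_t})=f(X_{L_t})$ holds identically, so continuity of $f$ is not needed; only the indicator has to converge, which the lemma provides. Alternatively, the second moments can be handled like the first moments by applying Theorem~\ref{Thm: our} to $f^2$: $f^2$ satisfies \ref{assumption_f} whenever $f$ does, and under \ref{assumption_f_} it still has the required regularity and boundary vanishing ($G(f^2)=2fGf+\nabla f^T\Sigma\nabla f$, whose last term needs care). This is why I prefer the pathwise argument.
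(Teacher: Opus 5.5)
Your argument is correct, but you handle the fluctuation part differently from the paper. The paper couples each $Z_{h_N}^j$ with an exact sample $Z_j$ driven by the same Brownian path. It then writes $\sqrt N(u_N^{h_N}-u)$ as $\frac{1}{\sqrt N}\sum_j(Z_j-\ex Z)$, plus a coupling term $A_N$, plus the bias $B_N$. The first term is handled by the classical i.i.d.\ CLT for the \emph{fixed} law of $Z$. The coupling term vanishes in $L^2$, since $\var A_N=\var(Z-Z_{h_N})\le \ex(Z-Z_{h_N})^2\to 0$ by the almost sure convergence from \eqref{eq:endpoint-indicator-convergence}.

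You instead treat $\{Z^k_{h_N}\}_{k\le N}$ as a triangular array and apply Lindeberg--Feller. The Lindeberg condition is trivial by boundedness, so all you need beyond the bias bound is $\var_x Z_{h_N}\to\sigma(t,x)^2$. That is a statement about the law of $Z_{h_N}$ alone, so your route needs no coupling in principle. Under \ref{assumption_f}, your alternative of applying Theorem~\ref{Thm: our} to $f^2$ (whose support is still at distance $2\varepsilon$ from $\partial D$) gives the second-moment convergence with an explicit $\sqrt{h}$ rate and bypasses the pathwise lemma entirely. You are also right that this fails in general under \ref{assumption_f_}, because $\nabla f^T\Sigma\nabla f$ need not vanish on $\partial D$. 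There your pathwise fallback is the same ingredient the paper uses.

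The paper's route gets by with the classical CLT only, and it gives slightly more: at the $\sqrt N$ scale, the scheme-based estimator is $L^2$-close to the ideal exact-sample estimator. The price is that it genuinely requires $\ex(Z-Z_{h_N})^2\to0$ on a common probability space, not just convergence of moments.
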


\begin{proof}Recall
$Z=f(X_{L_t})\mathbf 1_{\{L_t<T_D\}}$ and
{$Z_h=f(Y_{L_t})\mathbf 1_{\{L_t<\widehat{T}_{D}^{\,h,L_t}\}}$}.
For each \(j=1,\dots,N\), let \(Z_j\) be the \(j\)-th independent copy of \(Z\), and let
\(Z_h^j\) be the \(j\)-th independent copy of \(Z_h\) and assume that $Z_h^j$ and $Z_j$ are coupled by via the same Brownain path. Then the Monte Carlo estimator
$u_N^h:=\frac1N\sum_{j=1}^N Z_h^j$
of
$u:=\mathds E Z$ satisfies the following for any
sequence \(h_N\downarrow0\):
\begin{equation}
\label{eq:CLT}
\sqrt N\bigl(u_N^{h_N}-u\bigr)
=
\frac1{\sqrt N}\sum_{j=1}^N (Z_j-\mathds EZ)
+A_N+B_N,
\end{equation}
where $A_N:=\frac1{\sqrt N}\sum_{j=1}^N
\Bigl[
(Z_{h_N}^j-Z_j)-\mathds E(Z_{h_N}-Z)
\Bigr]$ and $B_N:=
\sqrt N\bigl(\mathds EZ_{h_N}-\mathds EZ\bigr)$. Since $h_N=N^{\delta}$ for  $\delta<-1$, by Theorem~\ref{thm: l2error}, the bias satisfies  $\lim_{N\to\infty}B_N=0$.

{By the endpoint-grid convergence in \eqref{eq:endpoint-indicator-convergence}, applied pathwise with $q=L_t$, and by $Y=X$ in \eqref{interpolated}, one has $Z-Z_{h_N}\to0$ almost surely.} Since the function $f$ is bounded, $Z-Z_{h_N}\to0$ in $L^2$ as $N\to\infty$. Note $\ex A_N =0$ and $\var A_N =\var (Z-Z_{h_N})\leq \ex[(Z-Z_{h_N})^2]\to0$. Thus $A_N\to0$ in probability as $N\to\infty$.

Since the first summand on the right-hand side of~\eqref{eq:CLT} converges weakly to a normal distribution and $A_N+B_N\to 0$ in probability, Slutsky's theorem completes the proof.
\end{proof}

\section{Examples}\label{s:examples}
In this section, we give two detailed examples of our method. The first one deals with a time-fractional Cauchy-Dirichlet problem for the Laplacian in a ball. In it, we back up our claims in Theorem \ref{thm: l2error} and Theorem \ref{CLT} with numerical evidence. The second illustrates the applicability of our method in high-dimensions and in non-trivial geometry.

The simulations are performed in Python, using the standard libraries, e.g., \texttt{SciPy} library is used to invoke Bessel functions, while for the Mittag-Leffler function used in the examples, we use the \texttt{pymittagleffler} library, which was developed based on the work of \cite{Garrappa2015}.

All simulations were done on a Microsoft Surface Pro 8 with Intel i7-1185G7 CPU and 16 GB of RAM (no GPU was used). We attach the GitHub repository \cite{Python} containing the corresponding Python codes used to generate the figures in the following examples for the reader's convenience.
\begin{example}\label{ex:1} 
    Set $D=\{x\in\R^2:\, |x|<R\}$, $R>0$. Let $L=(L_t,t\ge0)$ be the inverse of an $\alpha$-stable subordinator, $\alpha\in (0,1)$, and $B=(B_t,t\ge0)$ the Brownian motion in $\R^2$, with generator $\Delta$, independent of $L$. In this scenario, the solution to the time-changed problem \eqref{eq1}--\eqref{eq3} reads
    \begin{equation}\label{Ex.Sol.}
        \begin{aligned}
            &u(t,r) = \sum_{n=1}^\infty c_n E_{\alpha}(-\lambda_n^2 t^\alpha) J_0\left(\lambda_n r \right),\\
            &c_n = \frac{2}{R^2 J_1(j_{0,n})^2}\int_0^R yf(y) J_0\left(\lambda_n y \right) dy,\quad\quad \lambda_n = \frac{j_{0,n}}{R},
        \end{aligned}    
    \end{equation}
    where $E_\alpha$ is the Mittag-Leffler function, $J_i(\cdot)$, $i\in\{0,1\}$, are the Bessel functions of the first kind, and $j_{0,n}$, $n\in\N$, is the sequence of positive zeros of $J_0$. For further details see \cite[Chapter 7]{Carslaw1921}.
    
    Although $u(t,x)$, as given in \eqref{Ex.Sol.}, cannot be strictly implemented, it can be considered (practically) exact by expanding the sum until a desired accuracy is reached, e.g., 15 significant digits. We apply this approach for the initial datum $f(y)=(1-y^2/R^2)^3$, where the corresponding $c_n$'s are derived in Appendix \ref{ap:B}. Further, we  implement Algorithm \ref{alg: 1} to obtain the approximation $u_N^h(t)$ (under $\pr_x$) of $u(t,x)$, and  verify the results found in Theorems \ref{thm: l2error} and \ref{CLT}. In particular, the focus is on the behavior of the $L^2$ error as $h\to0$, and as $N\to+\infty$, as well as the confidence intervals for $u_N^{h(N)}(t)$.
    The $L^2$ error is computed as a mean squared error (MSE):
    \begin{equation}
        \frac{1}{M}\sum_{i=1}^M \left(^iu_N^h(t)-u(t,x)\right)^2,
    \end{equation}
    i.e. we take $M$ independent realizations of $u_N^h(t)$ and calculate its mean.
    Theorem \ref{CLT} allows us to write the confidence intervals
    \begin{equation}\label{eq: CI}
        u_N^{h(N)}(t) \pm \frac{\sigma(t,x)}{\sqrt{N}}\varphi(\alpha/2),
    \end{equation}
    where $\varphi(\alpha)$ is $\alpha$-quantile of the standard normal distribution, and $\sigma(t,x)\leq\|f\|_\infty$. Note that for a desired tolerance error $\varepsilon$, $N$ has to satisfy $N> \varepsilon^{-2}\|f\|_ \infty^2 \varphi(\alpha/2)^2$. We will compare the confidence intervals with the robust theoretical standard error ($\sigma(t,x)\le 1$) with the sampled one.

    Figure~\ref{fig: Error} shows results for the 2-dimensional ball of unit radius, the initial condition $f(x)=(1-|x|^2/R^2)^3$, $x\in\R^2$, and the stable parameter $\alpha=1/2$. In Subfigure~\ref{fig: L2_h}, for a fixed time-space point $(T,x)=(0.5,0)$, it is shown on a log-log scale how the $L^2$ error (with $M=20$) exhibits a decrease as the time step $h$ approaches zero for fixed $N=10^6$, together with a linear log-log fit (slope is 0.94). In Subfigure~\ref{fig: L2_N}, for a fixed time-space point $(T,x)=(0.5,0)$, it is shown on a log-log scale how the $L^2$ error (with $M=20$) exhibits a decrease as the number of Monte Carlo samples $N$ approaches infinity for fixed $h=10^{-3}$, together with a fitted curve of the expected type $MSE(N)\approx a+b/N$. Note that the disparity between the sample and the fit for big $N$ is a bit illusive due to log-log scale. In Subfigure~\ref{fig: L2_h+N}, we take variable time step $h$ and then take $N=\lfloor 1/h\rfloor$ and demonstrate linear decay (linear log-log fit has slope 1.01). These findings align with those in Theorem~\ref{thm: l2error}. Subfigure~\ref{fig: CLT bounds} shows $u_N^{h(N)}$, where $h(N)=N^{-1-10^{-4}}$, as well as its confidence intervals for a tolerance $\varepsilon$. In particular, we plotted the $95$\%--confidence interval and took $\varepsilon=10^{-2}$. The number of Monte Carlo samples is therefore $N=38416$, which implies $h(N)\approx 2.6\times10^{-5}$. We emphasized the difference between the a priori theoretical bound for the variance  in the confidence interval (i.e., the blue area drawn using the a priori bound for $\sigma(t,x)$ which is $\|f\|_\infty=1$, and which lead to $N=38416$) and the confidence interval with the sampled variance (the orange area). This suggest that the true number of samples $N$, needed to reach the desired tolerance, could be far smaller than the theory necessitates.
    
\end{example}

\begin{figure}[ht]
	\centering
	\subcaptionbox{behavior of the mean squared error as the time step decreases.\label{fig: L2_h}}[.48\textwidth]{
		\includegraphics[width=\linewidth]{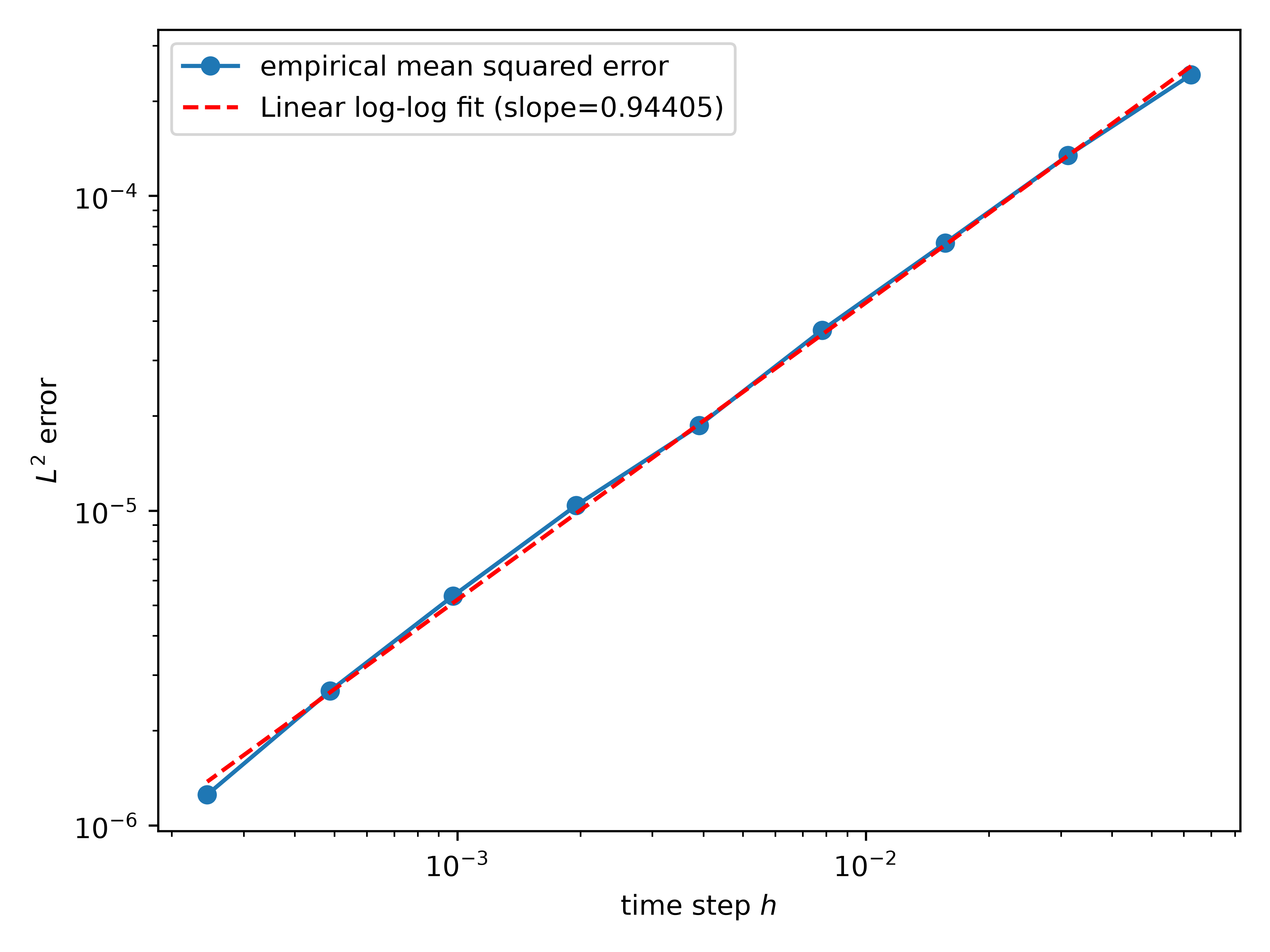}
	}
	\hfill
	\subcaptionbox{behavior of the mean squared error as the number of Monte Carlo samples increases.\label{fig: L2_N}}[.48\textwidth]{
		\includegraphics[width=\linewidth]{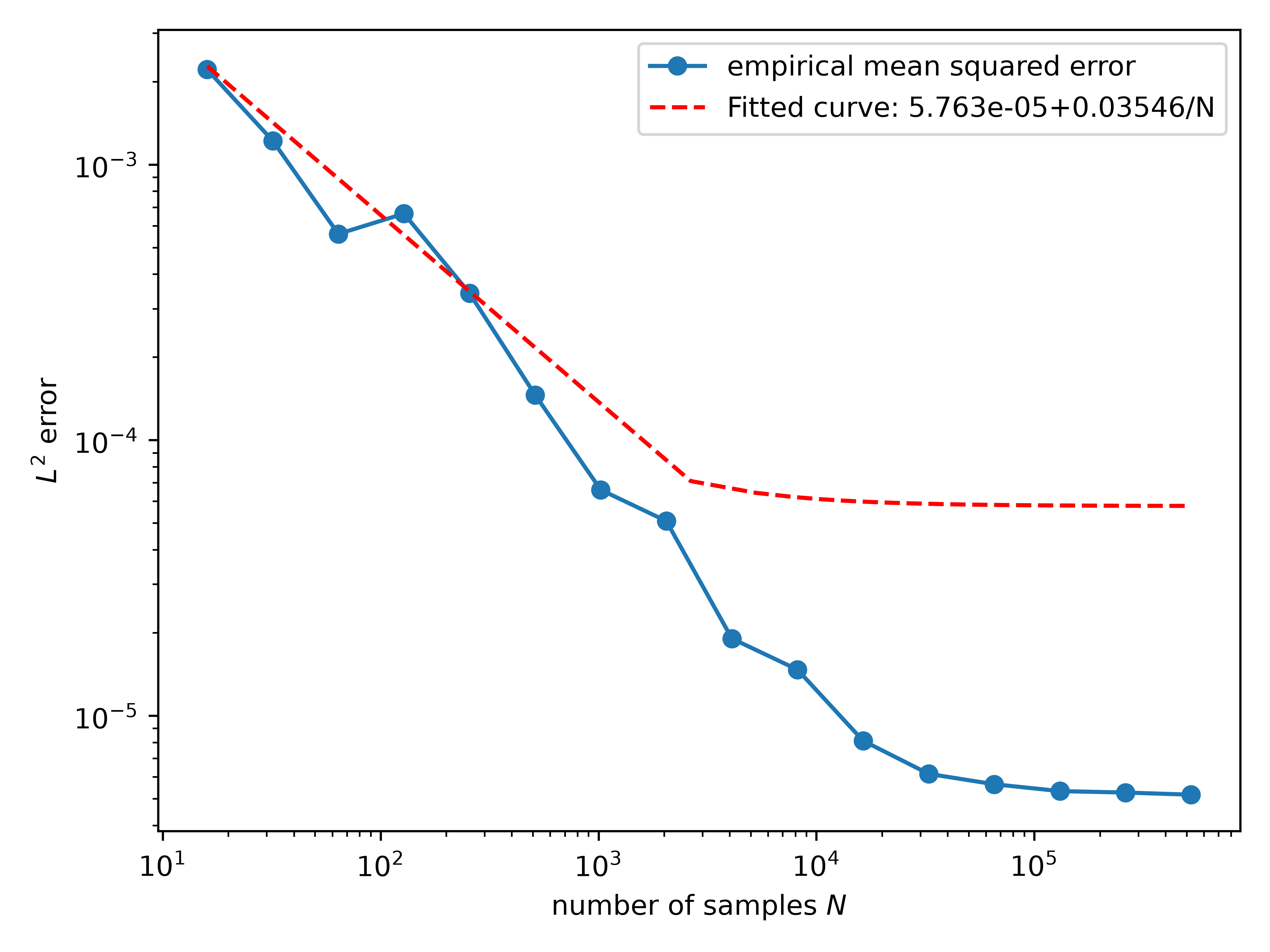}
	}
	\par\medskip
	\subcaptionbox{behavior of the mean squared error with joint decrease $h=1/N$.\label{fig: L2_h+N}}[.48\textwidth]{
		\includegraphics[width=\linewidth]{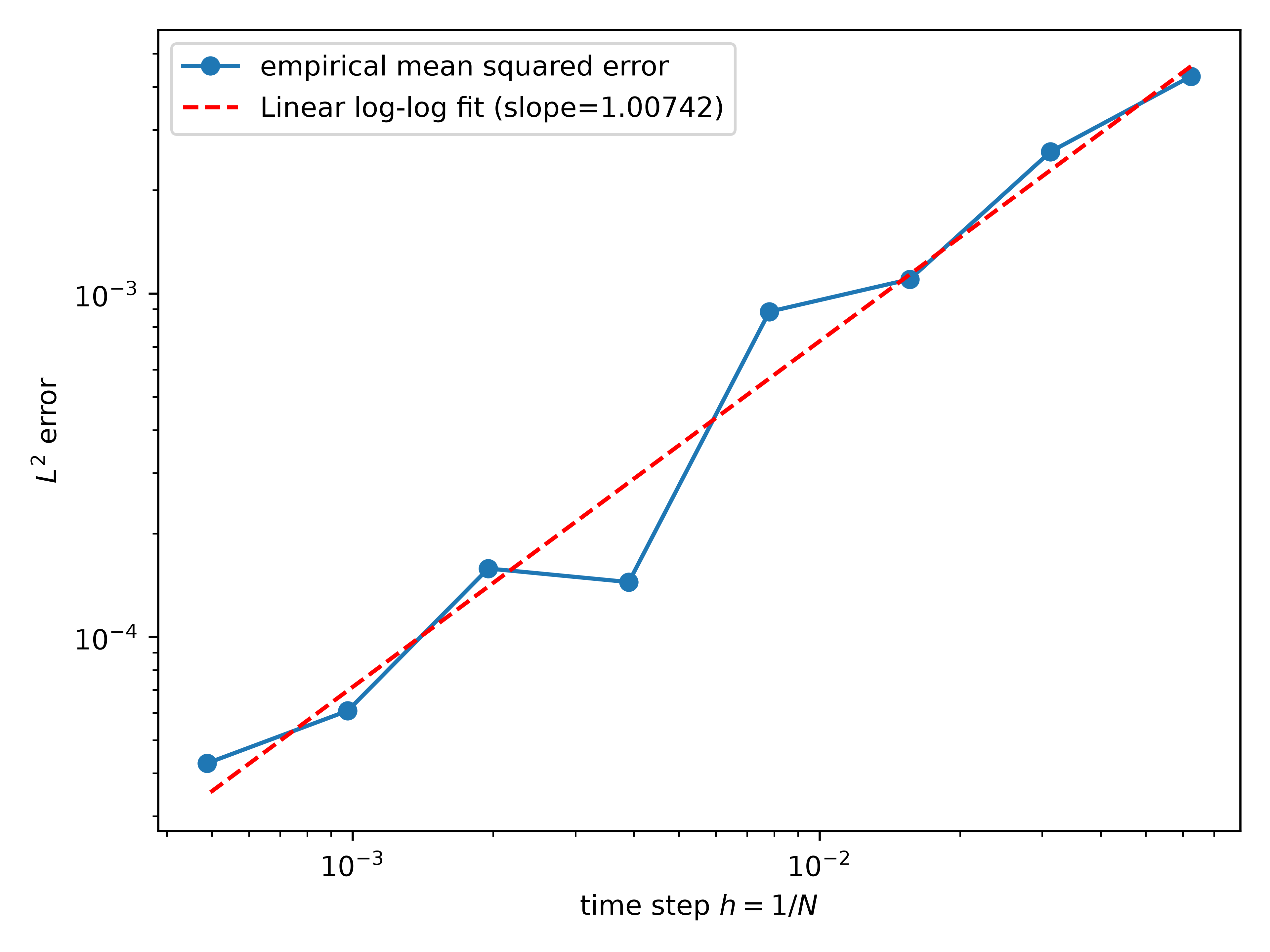}
	}
	\hfill
	\subcaptionbox{The Monte Carlo estimator $u_N^{h(N)}$ and confidence interval.\label{fig: CLT bounds}}[.48\textwidth]{
		\includegraphics[width=\linewidth]{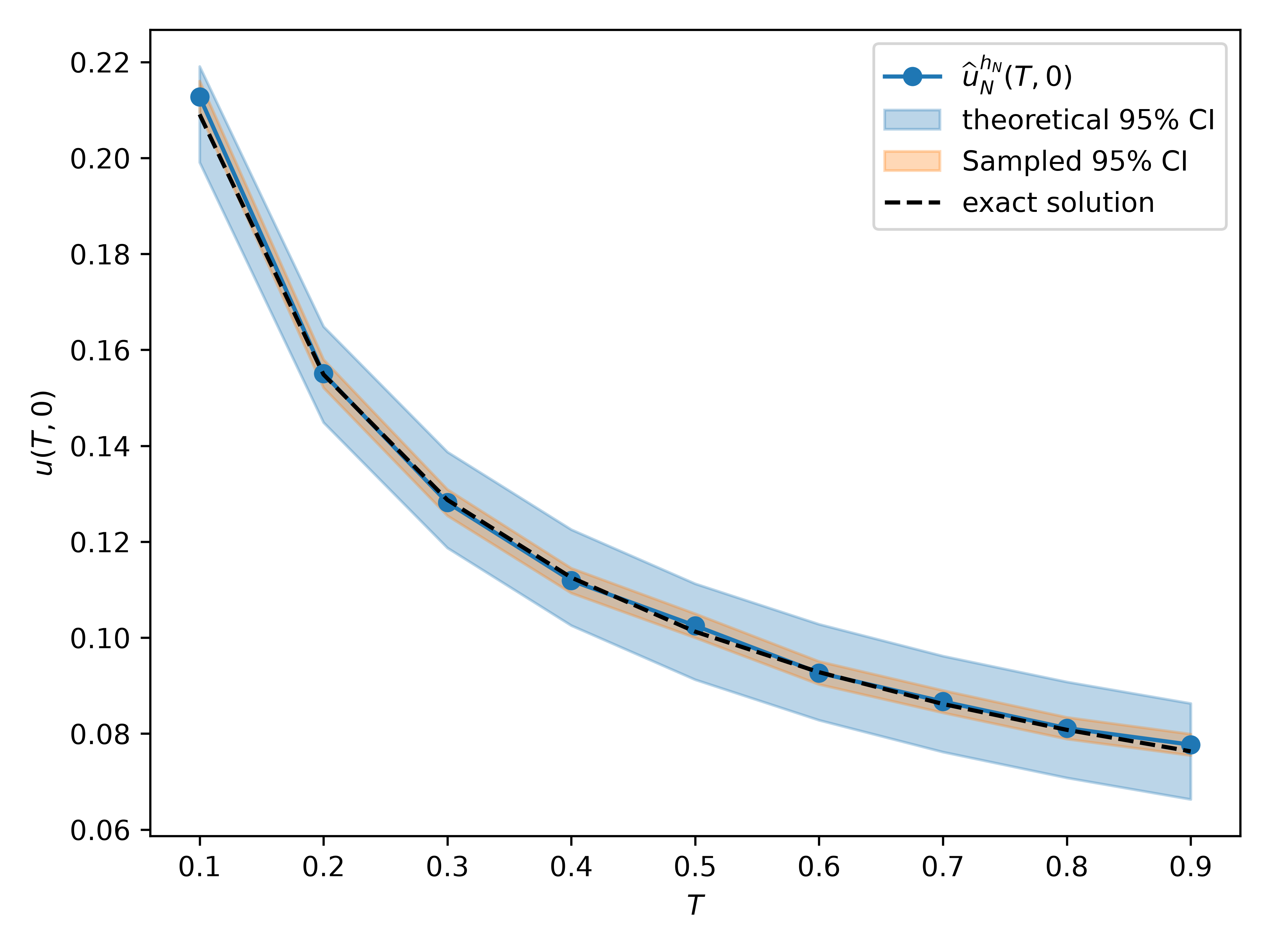}
	}
		\caption{For fixed $M=20$ and $(T,\,x)=(1/2,\,0)$, subfigure (A) shows the decrease of the MSE (mean squared error) as the time step $h$ approaches zero, with $N=10^^6$ fixed; while in subfigure (B) the MSE decreases as number of Monte Carlo samples $N$ approches infinity, with $h=10^{-3}$ fixed. Subfigure (C) shows the decrease of the MSE as the step $h$ approaches zero as number of samples increases jointly with $h$ by $N=\lfloor 1/h\rfloor$. Finally, for $x=0$ fixed, subfigure (C) shows the 95\% confidence intervals, shaded regions, for a desired tolerance of $\varepsilon=10^{-2}$; here $N=38417$ and $h=2.6\times10^{-5}$.}
		\label{fig: Error}
\end{figure}

\begin{example}\label{ex:high-dimensional-shell}
	The following example illustrates the applicability of our Monte Carlo
	method in a genuinely high-dimensional setting with non-trivial geometry,
	in which a deterministic spatial discretization would become prohibitively
	expensive.
	
	More precisely, at a fixed point $x_\star$, we approximate the solution
	\[
	u_d(T,x_\star)
	=
	\mathds E_{x_\star}\!\left[
	f_d(X_{L_T})
	\mathbf 1_{\{L_T<T_{D_{A,d}}\}}
	\right]
	\]
	to a time-fractional Cauchy--Dirichlet problem with initial datum $f_d$ on
	the $d$-dimensional anisotropic shell $D_{A,d}$.

    The shell geometry is relevant in applications. For example, in three
    spatial dimensions, hollow spherical shells arise naturally in models of
    diffusion-controlled release from porous nanocarriers. In such models, mass
    transport takes place through a shell surrounding an inner cavity, and the
    release kinetics depend on the shell thickness; see, e.g.,
    \cite{WangEdwards2016}. The high-dimensional anisotropic shell considered
    below is therefore a computational extension of a geometry that already
    appears in diffusion-driven transport models.
    
    We choose the initial datum $f_d$ to be a normalized Dirichlet eigenfunction
    of radial-like type. As we prove below, this implies that
    $u_d(T,x_\star)$ admits a closed form, which can be used as an exact benchmark
    for the Monte Carlo approximation. The radial-like structure of $f_d$, as
    well as the various parameters involved in the construction below, is used
    only to compute the reference solution exactly; namely, it reduces the
    high-dimensional problem to a one-dimensional radial spectral problem; see
    Appendix \ref{ap:b2}. Our sampling algorithm does not use this
    one-dimensional radial reduction: it evolves all $d$ coordinates of every
    path.
    
    We now define the setting rigorously. The technical details are provided in
    Appendix \ref{ap:b2}. Let $d\geq2$, put $\nu=d/2-1$, fix $\rho\in(0,1)$ and $\kappa>0$, and define
    \begin{equation*}
        A_d\coloneqq\operatorname{diag}(a_1,\ldots,a_d),
        \qquad
        a_i\coloneqq
        \begin{cases}
            1, & i \text{ odd},\\
            1/2, & i \text{ even}.
        \end{cases}
    \end{equation*}
    Consider the generator 
    \[\mathcal{G}g(x)=\sum_{i,j=1}^d(A_dA_d^T)_{ij}\partial^2_{x_ix_j}g(x),
    \]
    which is the infinitesimal generator of the process $X_t=x+\sqrt{2}\,A_dB_t$, $t\ge0$, where $(B_t)_t$ is the standard Brownian motion in $\R^d$. For the time
    operator, let $0<\alpha<1$ and consider the Caputo fractional derivative $\partial_t^\alpha f(t)=\frac{1}{\Gamma(1-\alpha)}\int_0^t f'(s)(t-s)^{-\alpha}ds$, which corresponds to the $\alpha$-stable inverse subordinator.
    
    For the domain of the time-fractional Cauchy--Dirichlet problem take
    \begin{equation*}
    	D_{A,d}\coloneqq
    	\left\{x\in\R^d:\ R_0<|A_d^{-1}x|<R_1\right\},
    \end{equation*}
    where $R_0$ and $R_1$ are specified below. Note that $D_{A,d}$ is a smooth anisotropic shell with two boundary components. In the definition of $D_{A,d}$, we take $R_1\coloneqq\frac{q_{\nu,1}}{\kappa}$ and $R_0\coloneqq\rho R_1$ where $q_{\nu,1}>0$ is the square root of the first eigenvalue of the problem \eqref{eq:radial-SL}.
    Under this setting, there exists a radial-like non-negative eigenfunction $f_d$ associated with the eigenvalue $-\kappa^2$ of the operator $\mathcal G$. More precisely, 
    \[
    	\mathcal{G} f_d=-\kappa^2 f_d\quad \textrm{in $D_{A,d}$},\qquad  f_d=0\quad \textrm{on $\partial D_{A,d}$},
    \]
    where $z\mapsto f_d(A_dz)$ is a true radial function in the spherical shell $\{z\in\R^d:R_0<|z|<R_1\}$. Moreover, since $\mathcal{G} f_d=-\kappa^2 f_d$ in $D_{A,d}$, $\mathcal{G}f_d$ continuously extends to 0 at $\partial D_{A,d}$, i.e. $f_d$ satisfies \ref{assumption_f_}. We normalize $f_d$, and with $x_\star\in D_{A,d}$ we denote the point of maximum of $f_d$, i.e. $f_d(x_\star)=\|f_d\|_\infty=1$. Note that $x_\star$ is not unique since  $f_d$ is radial-like so we choose it in the positive direction of the first coordinate vector $e_1$. For details, see \eqref{def:f_d}. Moreover, by the representation \eqref{def:f_d}, the function $f_d$ can be easily evaluated in all points in $D_{A,d}$.
    
    Let $(P_s^D)_{s}$ denote the semigroup of the underlying diffusion $X_t=x+\sqrt 2 A_dB_t$ 
    killed upon exiting $D_{A,d}$. The eigenfunction relation implies $P_s^D f_d(x)  =    e^{-\kappa^2s}f_d(x)$,  $s\geq0$, $x\in D_{A,d}$.
    Therefore, conditioning on the inverse stable clock $L_T$ and using its
    independence from $X$, we obtain
    \[
    	u_d(T,x)= \mathds E_x\left[f_d(X_{L_T})\mathbf 1_{\{L_T<T_{D_{A,d}}\}} \right]=\mathds E\left[P_{L_T}^D f_d(x)\right] =
    	f_d(x)\,\mathds E\left[ 	e^{-\kappa^2 L_T} 	\right] = f_d(x)E_\alpha(-\kappa^2 T^\alpha),
    \]
    where $E_\alpha$ denotes the Mittag--Leffler function.  For more details, see e.g. \cite[Theorem~3.1, Eq.~(3.5)]{MeerschaertNaneVellaisamy2009}.   Finally, since $f_d(x_\star)=1$, we have
    \begin{equation}\label{eq:highdim-exact-solution}
    	u_d(T,x_\star)
    	=
    	E_\alpha(-\kappa^2T^\alpha),
    \end{equation} 
   which is the function that we approximate by our simulations. We emphasize once more that this elaborate construction is introduced solely to obtain an exact solution of the time-fractional Cauchy–Dirichlet problem, which serves as a benchmark for our simulation-based approximation.
    
    For the numerical experiment, fix now $\rho=4/5$, $\kappa=2$ and $\alpha=1/2$. In the main part of the numerical example, we will also take the dimension $d=20$ in order to highlight the difference between a pointwise Monte Carlo calculation and a global space--time discretization. Indeed, a conventional tensor-product grid on a bounding box with only eight interior points in each coordinate direction already contains
    \begin{equation*}
        8^{20}=1{\,}152{\,}921{\,}504{\,}606{\,}846{\,}976
    \end{equation*}
   points, even before discretizing the non-local time history. Assumptions under which sparse grids reduce the computational complexity may be found, e.g., in \cite[pp.~148--149]{BungartzGriebel2004}. In contrast, Algorithm~\ref{alg: 1} computes the single value $u_d(T,x_\star)$ without constructing a spatial mesh: at each Euler time point, it only checks
    \(
        R_0<|A_d^{-1}Y_{t_i}|<R_1,
    \)
    and stores $d$ coordinates for each active path. Moreover, the independent
    paths can be simulated in parallel.

    To numerically compute the values $u_d(T,x_\star)$, note that \(E_{1/2}(-z)
        =\operatorname{erfcx}(z),
    \)
    where $\operatorname{erfcx}$ is usually called \textit{scaled complementary error function} and is implemented in major numerical libraries. Hence,
    \begin{equation*}
        u_{20}(T,x_\star)=\operatorname{erfcx}(4\sqrt{T}).
    \end{equation*}
    For sampling, we also require the following values
    \begin{align*}
        q_{9,1}&\approx18.623363476094049,\\
        R_0&\approx7.449345390437620,
        \qquad R_1\approx9.311681738047024,\\
        x_\star&\approx(8.025577759448735,0,\ldots,0).
    \end{align*}
   
   In order to determine the confidence interval, we note that it is also possible to compute the theoretical variance entering \eqref{eq: CI}
   independently of the Monte Carlo samples, with great precision using eigenfunctions expansion of the semigroup $P_s^D$. For details, see \eqref{variance}.

    Figure~\ref{fig:high-dimensional-shell} reports the numerical results. Subfigure~\ref{fig:highdim-geometry} shows the $(x_1,x_2)$ section of the twenty-dimensional anisotropic shell. 
    
    In Subfigure~\ref{fig:highdim-scaling}, we fix the values $N=20000$, $T=10^{-2}$, and $h=10^{-3}$, while the dimension $d$ varies from $2$ to $32$, and we measure the time needed to obtain the approximation of $u_{20}(T,x_\star)$. The drawn points are medians of 10 single-threaded runs, where the fitted log--log slope is $1.014$, which is consistent with linear work per Brownian step in the dimension.
    
    In Subfigure~\ref{fig:highdim-mse}, we report how the empirical mean squared error (MSE) behaves relative to the (practically) exact MSE $\sigma(T,x_\star)^2/N$. The run uses $d=20$, $M=50$ independent estimators, and the sequence
    \[
        h_N=N^{-21/20},\qquad 
       N\in\{2^6,2^7,2^8,2^9,2^{10}\}.
    \]
    The fitted MSE slope is $-1.08$, and the dashed line is the theoretical Monte Carlo contribution $\sigma_d(T,x_\star)^2/N$.
    
    Finally, Subfigure~\ref{fig:highdim-ci} uses $N=512$ and
    \(
        h_N=\frac{1}{5}N^{-21/20}\approx2.86\times10^{-4}.
   \)
    The shaded pointwise $95\%$ intervals are
    \begin{equation*}
        u_N^{h_N}(T,x_\star)
        \pm z_{0.975}\frac{\sigma_d(T,x_\star)}{\sqrt{N}},
    \end{equation*}
    with $\sigma_d(T,x_\star)$ is (practically) exact standard deviation; i.e., no empirical variance is used in their construction.
\end{example}

\begin{figure}[ht]
    \centering
    \subcaptionbox{The $(x_1,x_2)$ section of the anisotropic shell $D_{A,d}$ and the starting point $x_\star$.\label{fig:highdim-geometry}}[.48\textwidth]{
        \includegraphics[width=\linewidth]{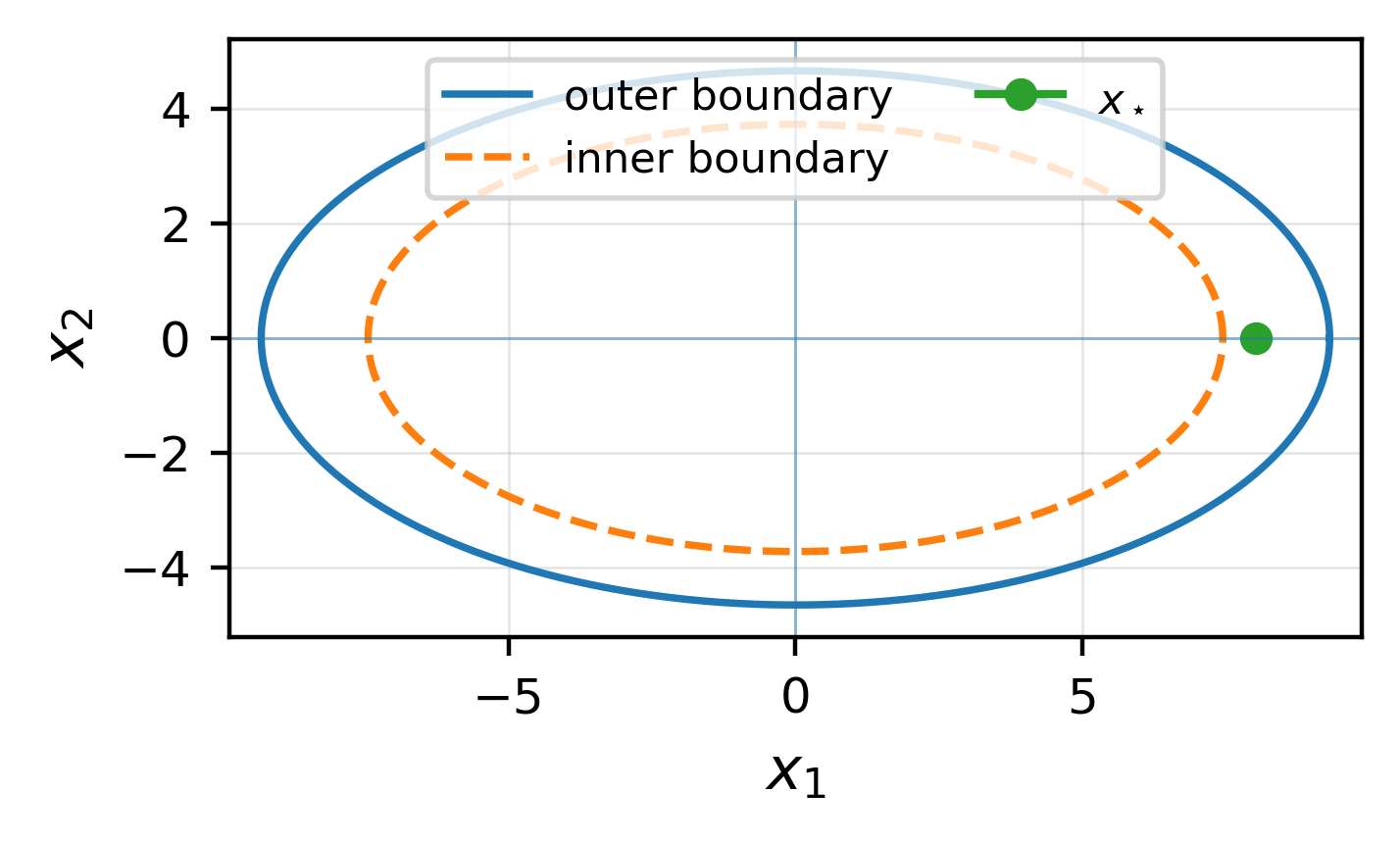}
    }
    \hfill
    \subcaptionbox{Single-core execution time as the dimension increases.\label{fig:highdim-scaling}}[.48\textwidth]{
        \includegraphics[width=\linewidth]{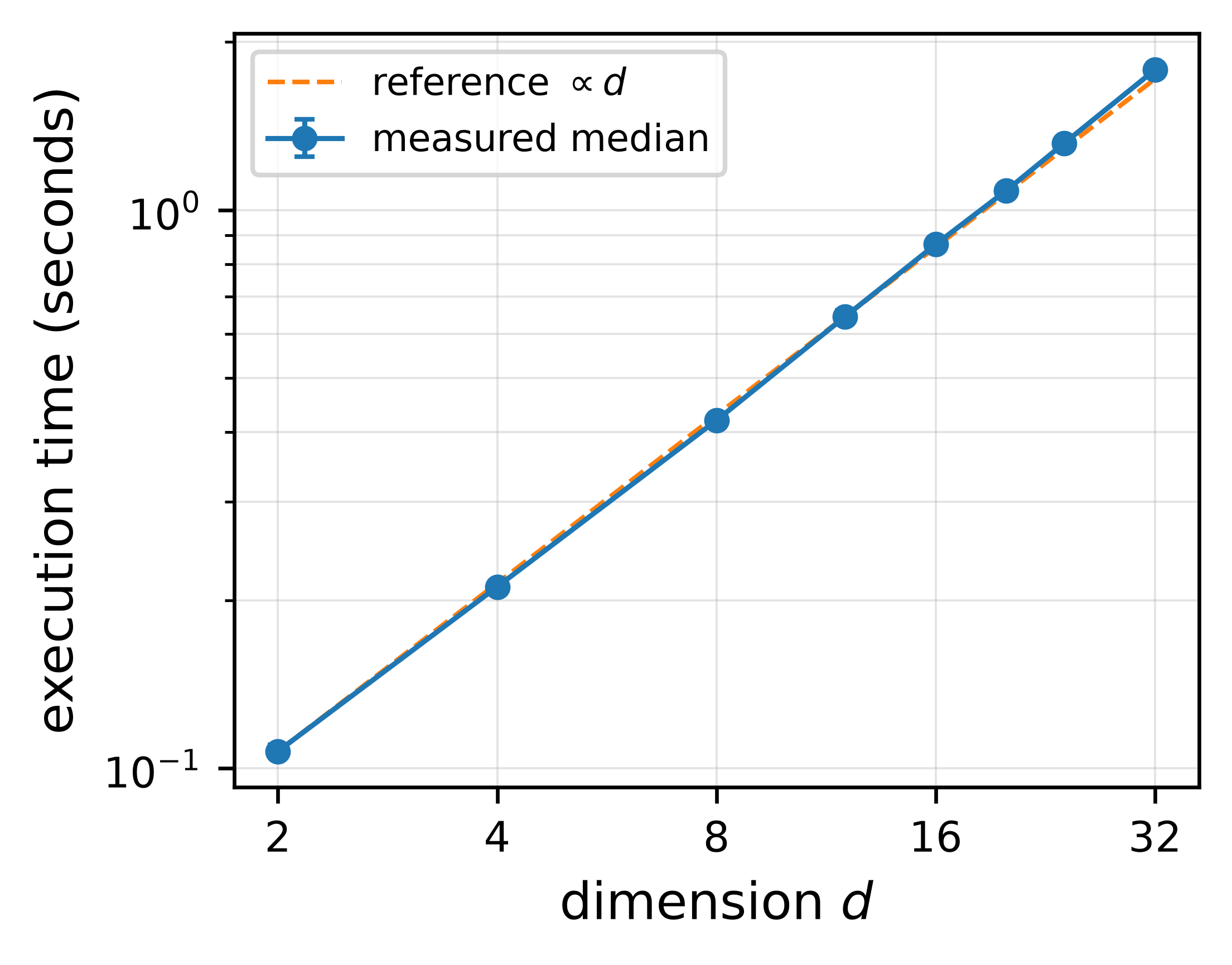}
    }
    \par\medskip
    \subcaptionbox{Empirical MSE in dimension $d=20$.\label{fig:highdim-mse}}[.48\textwidth]{
        \includegraphics[width=\linewidth]{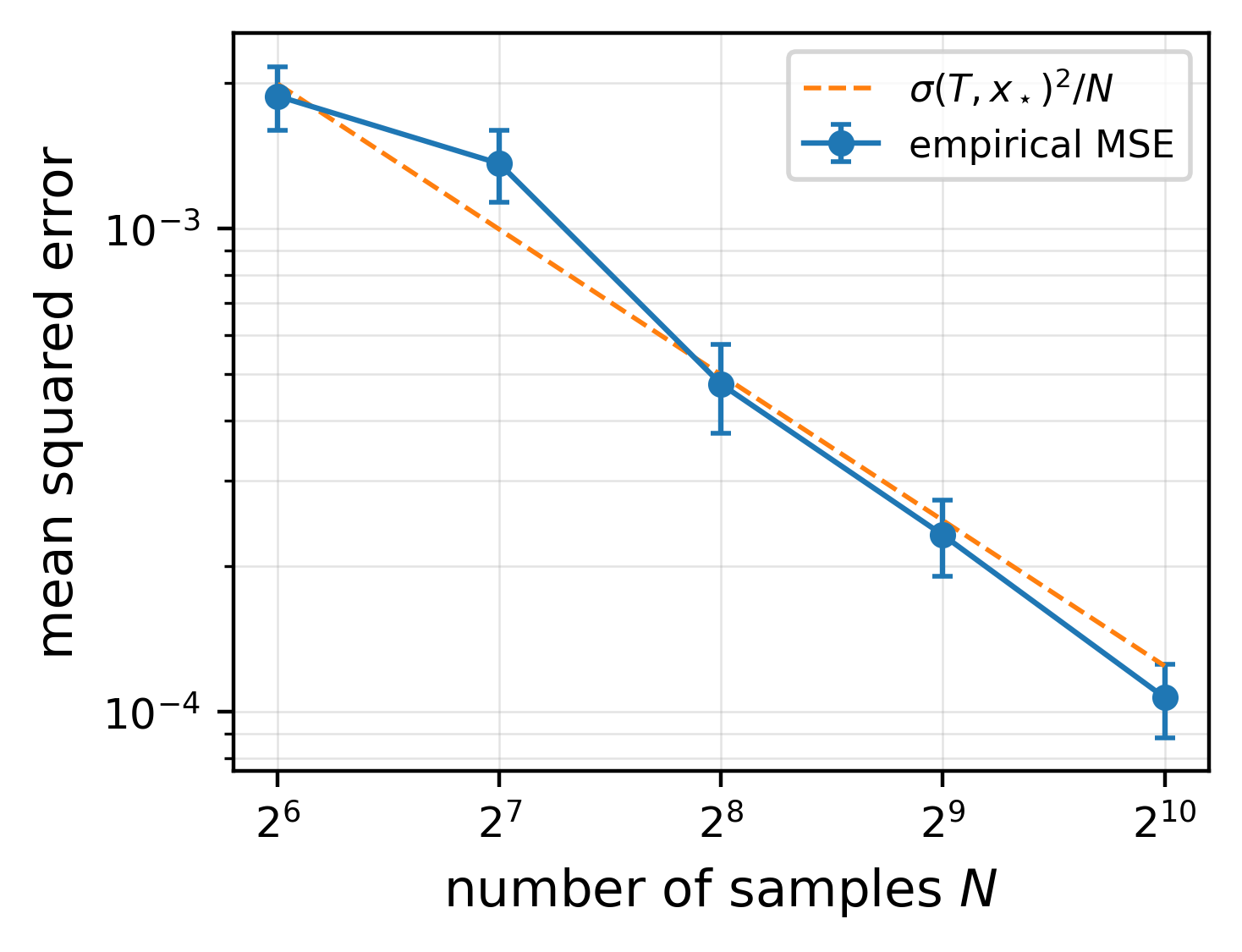}
    }
    \hfill
    \subcaptionbox{Estimator and pointwise $95\%$ intervals using the theoretical variance.\label{fig:highdim-ci}}[.48\textwidth]{
        \includegraphics[width=\linewidth]{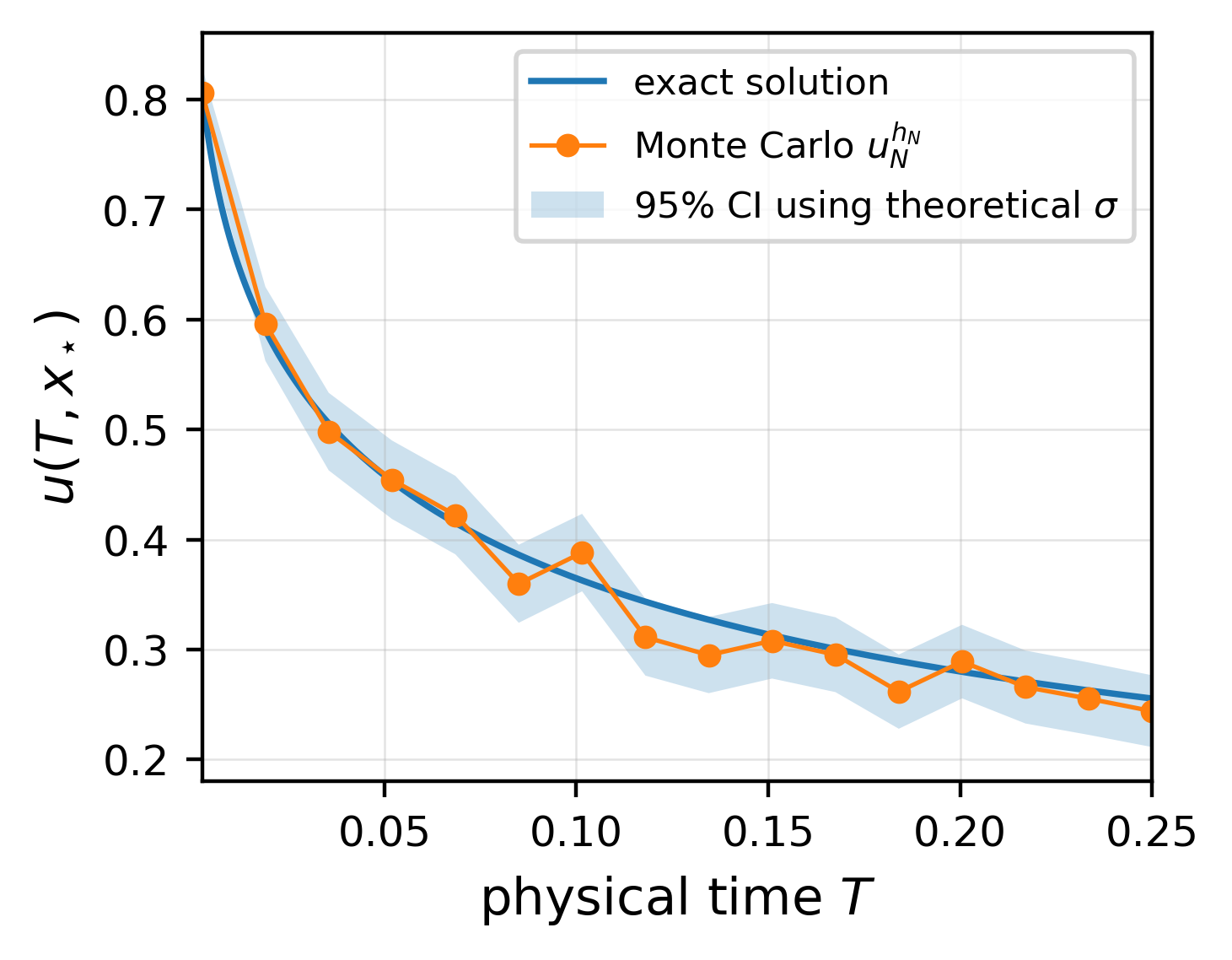}
    }
    \caption{In (A) the high-dimensional ellipsoidal-shell benchmark with $\rho=4/5$, $\kappa=2$, $\alpha=1/2$, and the alternating diagonal matrix $A_d$. In (B), $N=20000$, $T=10^{-2}$, $h=10^{-3}$, and the vertical bars show the interquartile range over 10 runs; the dashed line is proportional to $d$, with log-log slope 1.014. In (C), $d=20$, $T=10^{-2}$, $M=50$, and $h_N=N^{-21/20}$; the vertical bars are one standard error of the empirical MSE and the dashed line is $\sigma_d(T,x_\star)^2/N$. In (D), $d=20$, $N=512$, and $h_N=N^{-21/20}/5$; the solid curve is \eqref{eq:highdim-exact-solution}, the markers are the Monte Carlo estimates, and the shaded intervals use the theoretical variance \eqref{eq:highdim-theoretical-variance}.}
    \label{fig:high-dimensional-shell}
\end{figure}

	\section*{Acknowledgements}
	The authors acknowledge financial support under the National Recovery and Resilience Plan (NRRP), Mission 4, Component 2, Investment 1.1, Call for tender No. 104 published on 2.2.2022 by the Italian Ministry of University and Research (MUR), funded by the European Union – NextGenerationEU– Project Title “Non–Markovian Dynamics and Non-local Equations” – 202277N5H9 - CUP: D53D23005670006 - Grant Assignment Decree No. 973 adopted on June 30, 2023, by the Italian Ministry of University and Research (MUR)

    IB acknowledges financial support by the European Union – NextGenerationEU through the National Recovery and Resilience Plan 2021-2026 Institutional grant of University of Zagreb Faculty of Science (IK IA 1.1.3. Impact4Math), as well as the support by Croatian Science Foundation through the project IP-2025-02-8793.

    AM was supported in part by EPSRC grants EP/V009478/1 and EP/W006227/1.

    The authors would like to thank the Isaac Newton Institute for Mathematical Sciences, Cambridge, for support and hospitality during the programme Stochastic systems for anomalous diffusion, where work on this paper was undertaken. This work was supported by EPSRC grant EP/Z000580/1.
	
	\appendix	

\section{Technical results}
    First two lemmas are concerned with the density of the (killed) Brownian motion, given in \eqref{transition} and \eqref{hunt}.
	\begin{lemma}\label{ap:l:dens-reg}
		Assume \ref{assumption_domain}. For all $x,y\in D$, $t>0$, and a multi-index $\beta$, $|\beta|\le 2$, it holds that
		\begin{align}\label{1639}
			|\partial^\beta_xp_D(t,x,y)|\le \frac{C_0(d,D,\Sigma,\mu)}{t^{(d+|\beta|)/2}}e^{-C_1(\Sigma)\frac{|x-y|^2}{t}},\quad x,y\in D, t>0.
		\end{align}
	\end{lemma}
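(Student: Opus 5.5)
Split according to whether $t$ is small or large relative to a fixed scale, say $t\le 1$ versus $t>1$. This separates the classical local parabolic theory, whose constants depend on a bounded horizon, from the large-time regime, where we can exploit exponential decay of the killed semigroup. The key point is that \eqref{1639} must hold with constants independent of any horizon.

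\emph{Small times $0<t\le 1$.} Apply the classical result for the Green function of the parabolic operator $\partial_t - G$ on the cylinder $(0,1]\times D$ with Dirichlet data on a $C^{3+\alpha}$ domain; see \cite{Garroni1992, Ladyzenskaja1968} or \cite[Chapter 3, Theorem 5]{Friedman1964}. This gives
\[
|\partial_x^\beta p_D(t,x,y)|\le K\,t^{-(d+|\beta|)/2}e^{-c|x-y|^2/t}
\]
for $|\beta|\le 2$. Here $K$ depends on $d,D,\Sigma,\mu$ and on the fixed horizon $1$, hence only on $(d,D,\Sigma,\mu)$. The constant $c$ depends only on the ellipticity of $\Sigma$; the drift can be absorbed because $|\mu t|\le|\mu|$ for $t\le1$, at the cost of enlarging $K$. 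The Gaussian upper bound for $\beta=0$ is also immediate from the Hunt formula \eqref{hunt}, since $p_D\le p$.

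\emph{Large times $t>1$.} Use the Chapman--Kolmogorov identity $p_D(t,x,y)=\int_D p_D(1/2,x,z)\,p_D(t-1/2,z,y)\,dz$, and differentiate under the integral in $x$. By the small-time bound, $\sup_{x}\int_D|\partial^\beta_x p_D(1/2,x,z)|\,dz\le C$. Next, bound $\sup_{z,y\in D} p_D(s,z,y)$ for $s\ge 1/2$ by a constant. Since $D$ is bounded, this can be done either via $p_D\le p$ with $p(s,\cdot,\cdot)\le C s^{-d/2}\le C'$, or better via exponential decay of the killed semigroup. For the latter, $\sup_z\pr_z(T_D>s)\le Ce^{-\lambda s}$ because $D$ is bounded and the exit time has exponential moments, and combining this with ultracontractivity yields $p_D(s,z,y)\le Ce^{-\lambda s}$.

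We then absorb the factors in \eqref{1639} for $t>1$. Since $|x-y|\le\operatorname{diam}D$, we have $e^{-C_1|x-y|^2/t}\ge e^{-C_1\operatorname{diam}(D)^2}$, so the exponential factor is bounded below and can be put into the constant. The remaining task is to show $|\partial_x^\beta p_D(t,x,y)|\le C_0 t^{-(d+|\beta|)/2}$. With only boundedness of $p_D(s,\cdot,\cdot)$ this fails for large $t$ because of the polynomial factor $t^{-(d+|\beta|)/2}$. This is exactly why the exponential decay $Ce^{-\lambda(t-1/2)}$ is needed: it dominates $t^{-(d+2)/2}$ uniformly in $t>1$.

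The main obstacle is this large-time step: establishing uniform exponential decay of $p_D$ with constants free of any horizon. I would try first the standard argument. Choose $s_0$ with $\sup_z\pr_z(T_D>s_0)\le 1/2$; this is possible since $D$ is bounded and the drifted Brownian motion exits a ball with positive probability uniformly. The Markov property then gives geometric decay $\sup_z\pr_z(T_D>ns_0)\le 2^{-n}$. Finally, pass to the density bound via $p_D(t,z,y)=\int p_D(t-1/4,z,w)\,p_D(1/4,w,y)\,dw\le \pr_z(T_D>t-1/4)\sup p(1/4,\cdot,\cdot)$.
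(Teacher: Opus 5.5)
Your proof is correct. It shares the paper's overall architecture: a small-time bound, then Chapman--Kolmogorov plus exponential decay of the killed semigroup to beat the polynomial factor $t^{-(d+|\beta|)/2}$ for large $t$, with the Gaussian factor bounded below by $e^{-C_1\diam(D)^2}$. Both halves, however, are carried out differently.

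For $0<t\le 1$ you quote the classical Green-function bounds on the fixed horizon $[0,1]$. Since the horizon is frozen at $1$, the horizon dependence the paper worries about is harmless, so this is legitimate. The paper instead rederives the small-time bound from local interior and boundary Schauder estimates on parabolic cylinders of size $\sqrt t$ (after flattening and rescaling), combined with $p_D\le p$. Its advantage is that the Gaussian exponent visibly comes from the free density and depends only on $\Sigma$. With a black-box citation you would have to check that the cited exponent constant really is $C_1(\Sigma)$ and not $C_1(d,D,\Sigma)$; the latter would still suffice for Lemma~\ref{lemma 3.1}, but it is slightly weaker than the statement.

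For large times, the paper first removes the drift by Girsanov so that the killed semigroup is self-adjoint. It then uses the $L^2$ principal-eigenvalue decay $e^{-\lambda_D(t-1)}$ together with Cauchy--Schwarz, and treats $1\le t\le 2$ separately. Your probabilistic route works directly with the drift:
\begin{itemize}
\item take $s_0$ with $\sup_z\pr_z(T_D>s_0)\le 1/2$ and iterate by the Markov property;
\item bound $p_D(t,z,y)\le \pr_z(T_D>t-1/4)\sup p(1/4,\cdot,\cdot)$;
\item pair this with $\sup_x\int_D|\partial_x^\beta p_D(1/2,x,z)|\,dz\le C$.
\end{itemize}
This needs no symmetry, hence neither the Girsanov reduction nor spectral theory, and it also makes the separate $1\le t\le 2$ case unnecessary. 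What it gives up is only the sharp decay rate $\lambda_D$, which plays no role here.
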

    \begin{proof}
It is enough to prove the claim  with zero drift. Indeed, if $p_D^0$ denotes the killed density corresponding to the same
covariance matrix $\Sigma$, but with drift $\mu=0$, then
by Cameron--Martin--Girsanov formula,
for every Borel set $A\subset D$,
\[
        \mathbb P_x^\mu(X_t\in A,\ t<T_D)
        =
        \mathds E_x^0\left[
        \exp\left(\Sigma^{-1}\mu\cdot(X_t-x)-\frac{\mu^T\Sigma^{-1}\mu t}{2}\right)
        \mathbf 1_{\{X_t\in A,\ t<T_D\}}
        \right].
\]
Since the exponential factor depends only on $X_t$, this identity implies the
kernel relation
\begin{equation}\label{eq:killed-conjugation}
        p_D(t,x,y)
        =
        \exp\left(\Sigma^{-1}\mu\cdot(y-x)-\frac{\mu^T\Sigma^{-1}\mu t}{2}\right)p_D^0(t,x,y).
\end{equation}
Since $D$ is bounded, differentiating the exponential factor in
\eqref{eq:killed-conjugation} shows that it is enough to prove
\eqref{1639} for $\mu=0$.  In the sequel we therefore assume $\mu=0$
and write $p_D$ instead of $p_D^0$.

It is well known and easy to see by a direct computation that $p_D(t,x,y)$  is locally smooth both in $x$ and in $t$ and that it pointwise solves the parabolic problem 
        \begin{align}\label{eq:homogeneous-parabolic}
            \partial_t p_D(t,x,y)=\frac12\sum_{i,j=1}^d{\Sigma_{ij}}\partial^2_{x_i x_j}p_D(t,x,y),
        \end{align}
        in $(t,x)\in (0,+\infty)\times D$ for all $y\in D$. Moreover, since $D$ is of class {$C^{3+\alpha}$}, {it is known that the kernel $t\mapsto p_D(t,x,y)$ is of class $C^\infty$ and $x\mapsto p_D(t,x,y)$ is of class $C^{3+\alpha}(\overline D)$ (and even $\partial_t p_D(t,x,y)$ as well)}. {This (joint) regularity follows from the spectral representation of $p_D(t,x,y)$
        \begin{align}
            p_D(t,x,y)=\sum_{j=1}^\infty e^{-\lambda_jt}\varphi_j(x)\varphi_j(y),
        \end{align}
        where $(\lambda_j,\varphi_j)$, $j\in\N$, are the eigenpairs of $-\frac12\sum_{i,j=1}^d{\Sigma_{ij}}\partial^2_{x_i x_j}$, together with Weyl's law $\lambda_j\sim j^{2/d}$ and elliptic Schauder regularity up to the boundary for the Dirichlet eigenfunctions $\varphi_j$; see \cite[Chapters 6 \& 8]{Gilbarg1977}, and \cite[Section 2.6]{FernandezR-RosO-PDEs}. For the detailed proof for the Laplacian in a $C^{1,1}$ domain see \cite[Lemma A.7]{Bio23}.} This regularity means that we are free to use Schauder's regularity theorems both in the interior and on the boundary from \cite[Chapter 3 \& Chapter 4]{Friedman1964}.

\medskip
\noindent\textit{Step 1: the estimate for $0<t\leq 1$.}

Let $0< t\le 1$, fix $y\in D$. We first deal with the interior estimate. Let $r_0>0$ denote a constant dependent only on $D$ (which final value will be clear after the boundary estimate) such that $r_0\le R/8$ where we recall that $R$ is the localization radius of Property \ref{property 3.1}.
If $\dist(x,\partial D)\geq r_0\sqrt t$, then the cylinder
\[
        Q:=\left(\frac{t}{2},\frac{3t}{2}\right)
        \times B\left(x,\frac{r_0\sqrt t}{2}\right)
\]
is contained in $(0,\infty)\times D$.  Applying the weighted interior
Schauder estimate \cite[Chapter~3, Section~2, Theorem~5]{Friedman1964}
to the homogeneous equation~\eqref{eq:homogeneous-parabolic} gives, for
$|\beta|\le2$,
\begin{equation}\label{eq:interior-local-estimate}
        |\partial_x^\beta p_D(t,x,y)|
        \leq
        C(\Sigma) t^{-|\beta|/2}
        \sup_{(\tau,\xi)\in Q} p_D(\tau,\xi,y).
\end{equation}
Here, the factor $t^{-|\beta|/2}$ comes from the parabolic distance from
$(t,x)$ to the parabolic boundary of $Q$, which is comparable to
$\sqrt t$, see \cite[Chapter 3, Section 2]{Friedman1964}. Further, note that
\begin{align}
	 \sup_{(\tau,\xi)\in Q} p_D(\tau,\xi,y)\le  \sup_{(\tau,\xi)\in Q} p(\tau,\xi,y)\le \sup_{\xi\in B(x,r_0\sqrt{t}/2)}\frac{c_0}{t^{d/2}}e^{-c_1(\Sigma)\frac{|\xi-y|^2}{t}}.
\end{align}
However, for $\xi \in B(x,r_0\sqrt{t}/2)$ it holds $|\xi-y|\ge |x-y|-r_0\sqrt{t}/2$, therefore
\begin{align}
	\sup_{\xi\in B(x,r_0\sqrt{t}/2)}\frac{c_0}{t^{d/2}}e^{-c_1(\Sigma)\frac{|\xi-y|^2}{t}}\le\frac{c_2(r_0)}{t^{d/2}}e^{-c_3(\Sigma)\frac{|x-y|^2}{t}}.
\end{align}
This gives the claim in the case $\dist(x,\partial D)\geq r_0\sqrt t$.

It remains to treat the case \(\dist(x,\partial D)<r_0\sqrt t\).  The strategy is to flatten the boundary, and use Schauder's boundary regularity theorem on the flattened boundary and modified heat equation to obtain the desired constant of the lemma. 

First we flatten the boundary by choosing finite number of
the charts $F^i$'s and $G^i$'s as in Property \ref{property 3.1}, together with the sets $U^i$'s and $V^i$'s. Let $L_\star\coloneqq \max_i\{\textrm{Lip}(F^i),\textrm{Lip}(G^i)\}$, where $\textrm{Lip}(\cdot)$ denotes the Lipschitz constants of a function. Note that we can make $r_0$ so small so that $3(L_\star+1)r_0\le \min\{1/2,R\}$ and so that for all $\xi\in D$ such that $\dist(\xi,\partial D)\le 3(L_\star+1)r_0$ it holds that $\sup_i\dist(\xi,(U^i)^c\cap D)\ge 3(L_\star+1)r_0$, i.e. in this area $\xi$ is always far away from the boundary of at least one $U^i$ in which is also contained in. By such choice of $r_0$, it is enough to consider the case where $x$ belongs to a generic $U^i$ for which $B(x,3(L_\star+1)r_0)\cap D\subset U^i$. 

Now define
\[
        w(t,z):=p_D(t,G^i(z),y).
\]
By the chain rule, \(w\) solves an equation of the form
\begin{equation}\label{eq:flattened-operator}
        \partial_t w(t,z)
        =
        \sum_{k,l=1}^d a^i_{kl}(z)\partial^2_{z_kz_l}w(t,z)
        +
        \sum_{k=1}^d b^i_k(z)\partial_{z_k}w(t,z),
\end{equation}
in the local half-cylinder $(0,\infty)\times \Big((0,R)\times V^i\Big)$, where
\begin{equation}\label{eq:flattened-a}
        a^i_{kl}(z)
        =
        \frac12\sum_{m,n=1}^d{\Sigma_{mn}}
        \partial_{x_m}F^i_k(G^i(z))
        \partial_{x_n}F^i_l(G^i(z)),
\end{equation}
\begin{equation}\label{eq:flattened-b}
        b^i_k(z)
        =
        \frac12\sum_{m,n=1}^d{\Sigma_{mn}}
        \partial^2_{x_mx_n}F^i_k(G^i(z)).
\end{equation}
The coefficients in \eqref{eq:flattened-operator} are now variable, but still uniformly elliptic and
have uniformly bounded H\"older norms, because \(\Sigma\) is positive definite,
since \(D\) is \({C^{3+\alpha}}\) {and so are $F^i$'s and $G^i$'s}, and only finitely many charts $F^i$'s and $G^i$'s are used.

Let $x_0\in \partial D$ so that $|x-x_0|=\dist(x,\partial D)$. Without loss of generality, by translation, we can also assume that $F^i(x_0)=0\in \R^d$. The
Dirichlet condition gives \(w=0\) on the flat boundary \(\{z_1=0\}\). After the
parabolic scaling 
\[
        \tau=t+r^2\theta,
        \qquad
        z=F^i(x_0)+r\zeta=r\zeta,
        \qquad \textrm{with }
        r:=3(L_\star+1)r_0\sqrt t,
\]
i.e. by putting $\wt w(\theta,\zeta)=w(t+r^2\theta,r\zeta)$ and modifying the equation \eqref{eq:flattened-operator} accordingly,
the scaled
coefficients of the  modified \eqref{eq:flattened-operator} still satisfy the hypotheses of \cite[Chapter~4,
Section~7, Theorem~4]{Friedman1964}, with constants depending only on
\(d,D,\Sigma\). Note that in the new rescaled coordinates, the point $(t,x)$ is actually $\left(0,\frac{F^i(x)}{3(L_\star+1)r_0\sqrt t}\right)$, where $\left|\frac{F^i(x)}{3(L_\star+1)r_0\sqrt t}\right|\le 1/3$. 

Since the boundary datum on the flattened boundary is zero and the forcing term in the (modified) equation \eqref{eq:flattened-operator} is zero, we now apply
\cite[Chapter~4,
Section~7, Theorem~4]{Friedman1964} to the domain $\DD=(-1,1)\times \Big((0,1)\times(-1,1)^{d-1}\Big)$, and boundary parts $R=(-1,1)\times \Big(\{0\}\cap(-3/4,3/4)^{d-1}\Big)$, and $R_0=(-1/2,1/2)\times \Big(\{0\}\cap(-1/2,1/2)^{d-1}\Big)$. We obtain for $(\theta,\zeta)=\left(0,\frac{F^i(x)}{3(L_\star+1)r_0\sqrt t}\right)$, which is of constant parabolic distance to  $\partial_p\DD\setminus R_0$, and for every multi-index $|\gamma|\le2$
\begin{align}\label{est-1}
    |\partial_\zeta^\gamma\wt w(\theta,\zeta)|\le C\sup_{(\theta,\zeta)\in\DD}|\wt w(\theta,\zeta)|,
\end{align}
where the constant $C=C(\DD,R,R_0,d,\Sigma,D)>0$ is the one of \cite[Chapter~4,
Section~7, Theorem~4]{Friedman1964}, and is by this construction independent of $x$ and $t$.

After rescaling and returning to the original coordinates, since $\wt w(\theta,\zeta)=u(t+r^2\theta,G^i(r\zeta))$ and since the first and second derivatives of \(G^i\) are bounded, we obtain
\begin{equation}\label{eq:boundary-local-estimate-x}
        |\partial_x^\beta p_D(t,x,y)|
        \leq
        C t^{-|\beta|/2}
        \sup_{(\tau,\xi)\in \widetilde Q}p_D(\tau,\xi,y),
        \qquad |\beta|\leq2,
\end{equation}
where \(\widetilde Q\subset(0,\infty)\times D\) is contained in
\([t/2,3t/2]\times\Big(B(x,C_D\sqrt{t})\cap D\Big)\), for some $C_D>0$.
Now, in a similar way as for \eqref{eq:interior-local-estimate}, we get 
\begin{align}
    \sup_{(\tau,\xi)\in \wt Q}p_D(\tau,\xi,y)\le \frac{c_5(d,D,\Sigma)}{t^{d/2}}e^{-c_6(\Sigma)\frac{|x-y|^2}{t}}.
\end{align}
This finishes the proof for $0<t\le 1$.

\noindent\textit{Step 2: $t\ge 1$.}
Let first $1\le t\le2$. The semigroup property and the estimate already proved at
time $1/2$ give
\[
        |\partial_x^\beta p_D(t,x,y)|
        \leq
        \int_D |\partial_x^\beta p_D(1/2,x,z)|p_D(t-1/2,z,y)d z
        \leq C.
\]
The right-hand side of \eqref{1639} is also bounded from
below by a positive constant on $1\le t\le2$,  since $D$ is bounded. This finishes the proof for $1\le t\le 2$.

For $t\ge2$, the semigroup property  gives
\begin{align*}
        \partial_x^\beta p_D(t,x,y)
        & =
        \int_D\int_D
        \partial_x^\beta p_D(1/2,x,z)
        p_D(t-1,z,w)
        p_D(1/2,w,y)
        dz dw .
\end{align*}
Using Cauchy-Schwarz's inequality and the principal $L^2$--eigenvalue property for the semigroup of $X^D$, see \cite[Chapter~1, Section~1.3]{Davies1989}, we get
\begin{align*}
        |\partial_x^\beta p_D(t,x,y)|
        &\le
        \|\partial_x^\beta p_D(1/2,x,\cdot)\|_{L^2(D)}
        \|P_{t-1}^D p_D(1/2,\cdot,y)\|_{L^2(D)}        \\
        &\le
        C e^{-\lambda_D(t-1)},
\end{align*}
where $\lambda_D$ denotes the principal  eigenvalue of $-\frac12\sum_{i,j=1}^d{\Sigma_{ij}}\partial^2_{x_i x_j}$.
The two $L^2$-norms are uniformly bounded in $x,y\in D$ by the estimate
proved in the first step at time $1/2$.  Finally, for $t\ge2$,
$e^{-\lambda_D(t-1)}\le C_m t^{-m}$ for any $m>0$, and, since
$|x-y|\le \diam(D)$ we also have $
        \exp\left(-C_1\frac{|x-y|^2}{t}\right)
        \ge
        \exp\left(-C_1\diam(D)^2/2\right).
$
This finishes the proof for $t\ge 2$.

\end{proof}

\begin{lemma}\label{lemma bounds transition density}
    Let $P_t^\mu(x,y)$ be the density function as in \eqref{transition}, and let $D$ be an arbitrary bounded domain. Then there exists a positive constant $C=C(d,D,\Sigma,\mu)$ such that  for all $x,\,y\in D$ and all $t>0$ we have 
    \begin{align*}
        C^{-1} \leq \frac{P_t^\mu(x,y)}{P_t^0(x,y)} e^{\frac{\mu^T\Sigma^{-1}\mu}{2}\, t} \leq C.
    \end{align*}
\end{lemma}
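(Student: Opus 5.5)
Take the ratio of the two Gaussian densities explicitly and reduce the claim to a uniform bound on a bounded linear term in the exponent. By \eqref{transition}, with $z\coloneqq y-x$,
\begin{align*}
    \frac{P_t^\mu(x,y)}{P_t^0(x,y)}
    =\exp\left(-\frac{(z-\mu t)^T\Sigma^{-1}(z-\mu t)-z^T\Sigma^{-1}z}{2t}\right).
\end{align*}
The normalizing constants $((2\pi t)^d\det\Sigma)^{-1/2}$ are identical and cancel.

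Next, expand the quadratic form using the symmetry of $\Sigma^{-1}$:
\[
    (z-\mu t)^T\Sigma^{-1}(z-\mu t)=z^T\Sigma^{-1}z-2t\,\mu^T\Sigma^{-1}z+t^2\mu^T\Sigma^{-1}\mu .
\]
After dividing by $-2t$, the ratio equals $\exp\left(\mu^T\Sigma^{-1}(y-x)-\frac{\mu^T\Sigma^{-1}\mu}{2}t\right)$. This is the same Girsanov factor that appears in \eqref{eq:killed-conjugation}. Multiplying by $e^{\frac{\mu^T\Sigma^{-1}\mu}{2}t}$ cancels the time-dependent part exactly, leaving the quantity
\[
    \exp\bigl(\mu^T\Sigma^{-1}(y-x)\bigr),
\]
which does not depend on $t$.

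Finally, since $x,y\in D$ and $D$ is bounded, $|y-x|\le\diam(D)$. The largest eigenvalue of $\Sigma^{-1}$ is $\Sigma_+$, so $\|\Sigma^{-1}\|\le \Sigma_+$ in operator norm, and Cauchy--Schwarz gives
\[
    |\mu^T\Sigma^{-1}(y-x)|\le \Sigma_+|\mu|\diam(D).
\]
Hence the claim holds with $C\coloneqq\exp(\Sigma_+|\mu|\diam(D))$, which depends only on $D,\Sigma,\mu$ (and trivially on $d$), uniformly in $t>0$.

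There is no real obstacle. The only point needing care is the sign in the cross term, so that the time-dependent factor $e^{-\frac{\mu^T\Sigma^{-1}\mu}{2}t}$ is the one being compensated. Boundedness of $D$ is exactly what makes the remaining $t$-independent factor uniformly bounded above and below.
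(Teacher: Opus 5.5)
Your proof is correct and matches the paper's: it also computes the ratio as $\exp\bigl(\mu^T\Sigma^{-1}(y-x)-\frac{\mu^T\Sigma^{-1}\mu}{2}t\bigr)$ and concludes from the boundedness of $D$. Your explicit constant $C=\exp(\Sigma_+|\mu|\diam(D))$ just spells out that final step quantitatively.
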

\begin{proof}
    The ratio between $P_t^\mu(x,y)$ and $P_t^0(x,y)$ is
    \begin{align*}
        {\frac{P_t^\mu(x,y)}{P_t^0(x,y)} = \exp\left( -\frac{\mu^T\Sigma^{-1}\mu}{2}\, t + \mu^T \Sigma^{-1}(y-x)\right).}
    \end{align*}
    The result now follows since $D$ is bounded.
\end{proof}

\begin{lemma}\label{lemma 3.1}
        {Assume that \ref{assumption_domain} and \ref{assumption_f} are satisfied and that $v$ is as in \eqref{feynmann-kac}.} Then, for all $(s,x)\in[0,T)\times V_{\partial D}(\varepsilon)$ and every multi-index $\alpha$, $|\alpha|\le 2$ there exists a constant $C(|\alpha|,d,D,\Sigma,\mu)>0$ such that
    \begin{align*}
        |\partial_x^\alpha v(s,x)|\leq C\frac{\|f\|_\infty}{\varepsilon^{|\alpha|}}.
    \end{align*}
    If instead \ref{assumption_domain} and \ref{assumption_f_} are satisfied, then there exists $C=C(d,D,\Sigma,\mu,\alpha)>0$ such that
    \begin{align*}
        \sup_{0\le s<T}\sup_{x\in\overbar D}
        \left(|\nabla v(s,x)|+|D^2v(s,x)|\right)
        \le C\|f\|_{2+\alpha,\overbar D}.
    \end{align*}
    
\end{lemma}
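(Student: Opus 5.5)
The plan is to work from the kernel representation $v(s,x)=\int_D p_D(T-s,x,y)f(y)\,dy$ in \eqref{feynmann-kac}, with $t\coloneqq T-s>0$. Everything reduces to bounds that are uniform in $t\in(0,\infty)$, which is what makes the constants independent of $T$.

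\emph{First assertion, under \ref{assumption_f}.} I would use Lemma~\ref{ap:l:dens-reg} to differentiate under the integral sign, which is legitimate for $|\alpha|\le 2$. Let $x\in V_{\partial D}(\varepsilon)$ and $y\in\supp f$. Since $\dist(\supp f,\partial D)\ge 2\varepsilon$, the triangle inequality gives $|x-y|\ge\varepsilon$. Hence
\[
|\partial^\alpha_x v(s,x)|\le \|f\|_\infty\int_{\{|x-y|\ge\varepsilon\}}\frac{C_0}{t^{(d+|\alpha|)/2}}e^{-C_1\frac{|x-y|^2}{t}}\,dy .
\]
I would split the exponential in half. One half gives $t^{-|\alpha|/2}e^{-C_1\varepsilon^2/(2t)}\le C\varepsilon^{-|\alpha|}\sup_{r>0}r^{|\alpha|/2}e^{-C_1 r/2}$, using the substitution $r=\varepsilon^2/t$. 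The other half gives $\int_{\R^d}t^{-d/2}e^{-C_1|x-y|^2/(2t)}\,dy=C(d,\Sigma)$. This yields $C\|f\|_\infty\varepsilon^{-|\alpha|}$ uniformly in $t>0$, hence uniformly in $s\in[0,T)$ and in $T$. The point is that Lemma~\ref{ap:l:dens-reg} has no horizon dependence; it also covers the points $x\in V_{\partial D}(\varepsilon)\setminus D$, where $v=0$ by definition.

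\emph{Second assertion, under \ref{assumption_f_}.} Here I would split the time range into $t\in(0,1]$ and $t\ge1$.

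For $t\ge1$, I would write $v=P^D_{t-1/2}$ applied at time $1/2$ and use the semigroup property. Then $\partial^\alpha_x v(s,x)=\int_D\partial_x^\alpha p_D(1/2,x,z)\,(P^D_{t-1/2}f)(z)\,dz$. Lemma~\ref{ap:l:dens-reg} at time $1/2$ bounds the kernel in $L^1(dz)$, and $|P^D_{t-1/2}f|\le\|f\|_\infty$. Together these give $|\nabla v|+|D^2v|\le C\|f\|_\infty\le C\|f\|_{2+\alpha,\overline D}$ up to the boundary.

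For $t\in(0,1]$, I would invoke the global Schauder estimate for the first initial–boundary value problem, \cite[Chapter~3 and Chapter~4]{Friedman1964}. This is applied in forward time $w(t,\cdot)=v(T-t,\cdot)$ on the fixed cylinder $[0,1]\times\overline D$, with operator $\partial_t-G$, zero boundary data and initial datum $f$. This problem is well posed in $C^{2+\alpha,1+\alpha/2}$ exactly when the first-order compatibility conditions hold, namely $f=0$ and $Gf=0$ on $\partial D$, which is \ref{assumption_f_}. It gives $\|w\|_{C^{2+\alpha,1+\alpha/2}([0,1]\times\overline D)}\le C(d,D,\Sigma,\mu,\alpha)\|f\|_{2+\alpha,\overline D}$. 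Because the horizon is fixed at $1$, the constant is independent of $T$.

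\emph{Expected obstacle.} The second assertion is the delicate part, specifically verifying that the cited global Schauder theorem applies with the given compatibility conditions and with a constant independent of the horizon. If the reference requires more than I expect, I would fall back on a commutation argument. Since $Gf\in C^\alpha$ vanishes on $\partial D$, one has $\partial_t w=P^D_tGf$, so $\|Gw(t)\|_{C^\alpha}$ is controlled, and elliptic boundary Schauder theory \cite[Chapter~6]{Gilbarg1977} then bounds $\|w(t)\|_{2+\alpha}$. The catch is that this needs a $C^\alpha$ bound on $P^D_tGf$ uniform in $t\le1$, which is again a parabolic Schauder-type statement.
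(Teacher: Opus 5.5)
Your proof is correct. The first assertion is handled exactly as in the paper: Lemma~\ref{ap:l:dens-reg}, the separation $|x-y|\ge\varepsilon$ for $y\in\supp f$, splitting the Gaussian in half, and boundedness of $r\mapsto r^{m}e^{-cr}$. Your bookkeeping of the factor $1/2$ in the exponent is actually cleaner than the paper's displayed chain.

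For the second assertion the paper only cites a global parabolic Schauder estimate, \cite[Theorem 5.14]{lieberman}, and calls the horizon-uniform bound a direct consequence. You instead split $t=T-s$ into two ranges:
\begin{itemize}
\item For $t\in(0,1]$, you apply the global Schauder estimate to $w(t,\cdot)=P^D_tf$ on the fixed cylinder $[0,1]\times\overline D$. The first-order compatibility conditions $f=Gf=0$ on $\partial D$ are exactly what \ref{assumption_f_} provides.
\item For $t\ge1$, you write $P^D_tf=P^D_{1/2}(P^D_{t-1/2}f)$. You then use the $L^1(dz)$ bound on $\partial_x^\beta p_D(1/2,x,z)$ from Lemma~\ref{ap:l:dens-reg} together with $\|P^D_{t-1/2}f\|_\infty\le\|f\|_\infty$. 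This is the same device as Step 2 of the proof of Lemma~\ref{ap:l:dens-reg}.
\end{itemize}

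The paper's route is shorter. However, a Schauder constant on a cylinder $(0,T)\times D$ a priori depends on $T$, and the paper leaves implicit why it is uniform in the horizon. Your decomposition makes this independence transparent and uses only the kernel estimate already proved. It also gives the sharper bound $C\|f\|_\infty$ when $T-s\ge1$.

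The only step you should add is a one-line identification of the Schauder solution on $[0,1]\times\overline D$ with $P^D_tf$. This follows from uniqueness via the maximum principle, or from It\^o's formula applied to the solution, which is $C^{2,1}$ up to the boundary. Your fallback commutation argument is not needed.
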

\begin{proof}
    Assume \ref{assumption_domain} and \ref{assumption_f}. By using Lemma \ref{ap:l:dens-reg} and constants $C_0$ and $C_1$ therein, we have
    \begin{align*}
        |\partial_x^\alpha v(s,x)| &\leq \frac{C_0 \|f\|_\infty}{(T-s)^{(d+|\alpha|)/2}} \int_{\supp(f)} \exp\left(-C_1\frac{|x-y|^2}{T-s}\right)dy\\
        & \leq \frac{C_0 \|f\|_\infty}{(T-s)^{(d+|\alpha|)/2}} \exp\left(-\frac{C_1\varepsilon^2}{T-s}\right) \int_{\R^d} \exp\left(-\frac{C_1}{2}\frac{|x-y|^2}{T-s}\right) dy\\
        &\quad= \frac{(4\pi/C_1)^{d/2}C_0 \|f\|_\infty}{(T-s)^{|\alpha|/2}} \exp\left(-\frac{C_1\varepsilon^2}{T-s}\right).
    \end{align*}
    In the middle step, we used the fact that $|x-y|\geq \dist(\supp(f),x)$ for $y\in\supp(f)$. The first result follows from using the fact that $\sup_{r>0}r^{-m}\exp(-A/r)=(m/e)^m A^{-m}$.
    
    {The second assertion, under \ref{assumption_domain} and \ref{assumption_f_}, is a direct consequence of \cite[Theorem 5.14]{lieberman}.}
\end{proof}

\begin{lemma}\label{lemma 4.1}
    (\cite[Lemma 4.1]{Gobet2000}) Let $(X_t,\,t\geq0)$ be the Brownian motion as in \eqref{transition}. Let $s$ and $s^\prime$ be two times such that $0\leq s^\prime-s\leq\Delta$, $\Delta>0$. Then, 
    for $C=\frac{\Sigma_-}{8d^2}$ and for all $a>0$ it holds that 
    \begin{align*}
        \pr_x\left(\sup_{t\in[s,s^\prime]}|X_s-X_t|\geq a\right) \leq 2d\exp\left(4C|\mu|^2\Delta-Ca^2/\Delta\right).
    \end{align*}
\end{lemma}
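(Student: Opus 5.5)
The plan is to reduce to one-dimensional estimates coordinate by coordinate, control the Gaussian part by an exponential martingale and Doob's maximal inequality, and absorb the drift through Young's inequality, choosing the free parameters to produce the constant $C=\Sigma_-/(8d^2)$.

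\emph{Reduction.} By the Markov property (or simply stationarity and independence of increments) we may take $s=0$ and $s'=\Delta$; enlarging the interval only increases the probability. Write $X_t-X_0=\mu t+M_t$ with $M=\sigma W$. Since $|z|\ge a$ forces $|z_j|\ge a/\sqrt d$ for some $j$, a union bound over the $d$ coordinates and the two signs gives
\begin{align*}
\pr_x\Big(\sup_{t\le\Delta}|X_t-X_0|\ge a\Big)\le \sum_{j=1}^d\sum_{\pm}\pr\Big(\sup_{t\le\Delta}\big(\pm\mu_jt\pm M^j_t\big)\ge a/\sqrt d\Big).
\end{align*}
This already accounts for the factor $2d$. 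Each $M^j$ is a one-dimensional Brownian motion with variance rate $v_j=\Sigma_{jj}$. Because the largest eigenvalue of $\Sigma$ is $1/\Sigma_-$, we have $v_j\le 1/\Sigma_-$.

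\emph{One-dimensional bound.} On $[0,\Delta]$ the drift satisfies $\pm\mu_jt\le|\mu|\Delta$. For $\lambda>0$, the process $\exp(\pm\lambda M^j_t-\lambda^2v_jt/2)$ is a positive martingale. Doob's maximal inequality then yields
\begin{align*}
\pr\Big(\sup_{t\le\Delta}(\pm\mu_jt\pm M^j_t)\ge b\Big)\le \exp\Big(-\lambda b+\lambda|\mu|\Delta+\tfrac{\lambda^2\Delta}{2\Sigma_-}\Big),\qquad b=a/\sqrt d .
\end{align*}
To remove the cross term we use Young's inequality, $\lambda|\mu|\Delta\le \kappa\lambda^2\Delta+|\mu|^2\Delta/(4\kappa)$ with $\kappa>0$ free. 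Next we choose $\lambda=\gamma b/\Delta$. The exponent then becomes
\begin{align*}
-\frac{b^2}{\Delta}\Big(\gamma-\gamma^2\big(\kappa+\tfrac{1}{2\Sigma_-}\big)\Big)+\frac{|\mu|^2\Delta}{4\kappa}.
\end{align*}

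\emph{Choice of constants.} It remains to pick $\gamma$ and $\kappa$ so that the coefficient of $b^2/\Delta=a^2/(d\Delta)$ is at least $d\,C=\Sigma_-/(8d)$, and so that $1/(4\kappa)\le 4C=\Sigma_-/(2d^2)$. The second condition forces $\kappa\ge d^2/(2\Sigma_-)$, so we take $\kappa=d^2/(2\Sigma_-)$. Then $\kappa+1/(2\Sigma_-)=(d^2+1)/(2\Sigma_-)$. Maximising in $\gamma$ gives the coefficient $\Sigma_-/(2(d^2+1))$, so we must check that
\begin{align*}
\frac{\Sigma_-}{2(d^2+1)}\ \ge\ \frac{\Sigma_-}{8d}.
\end{align*}

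This bookkeeping is the step I expect to be the main obstacle, because the check just displayed fails for large $d$. The first remedy I would try is to split more cleverly. The coordinate reduction $a\to a/\sqrt d$ is wasteful, so I would instead bound $\sup|M_t|$ through $\sup|W_t|\ge a\sqrt{\Sigma_-}$. This works because $|\sigma w|\le|w|/\sqrt{\Sigma_-}$: the largest singular value of $\sigma$ is $1/\sqrt{\Sigma_-}$. The union bound is then applied to the $d$ independent standard coordinates of $W$, with threshold $a\sqrt{\Sigma_-/d}$.

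The drift is treated separately. Since $|\mu|\Delta\le a/2$ can be assumed, the drift only halves the threshold. In the complementary regime $a<2|\mu|\Delta$ we have $Ca^2/\Delta<4C|\mu|^2\Delta$, so the right-hand side of the lemma exceeds $2d\ge1$ and the bound is trivial. This is exactly where the factor $4C|\mu|^2\Delta$ comes from.

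In the main regime, the reflection principle for each standard coordinate, $\pr(\sup_{t\le\Delta}|W^j_t|\ge c)\le 2e^{-c^2/(2\Delta)}$, gives the exponent
\begin{align*}
-\frac{\Sigma_-a^2}{8d\Delta}\ \le\ -\frac{\Sigma_-a^2}{8d^2\Delta},
\end{align*}
so the lemma holds with $C=\Sigma_-/(8d^2)$ and the prefactor $2d$. I expect this second route, rather than the exponential-martingale optimisation, to produce exactly the stated constant.
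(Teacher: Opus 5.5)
Your second route is correct and is essentially the paper's proof. It has the same ingredients: the trivial regime $a\le 2|\mu|\Delta$, the drift halving the threshold otherwise, $\|\sigma\|_2=1/\sqrt{\Sigma_-}$, a coordinatewise union bound, and the reflection principle (the paper cites Bernstein's inequality). The only differences are that you split coordinates at $c/\sqrt d$ rather than the paper's $c/d$, which gives the sharper exponent $\Sigma_- a^2/(8d\Delta)$ before you weaken it to the stated constant, and that the abandoned exponential-martingale/Young attempt can simply be dropped from a write-up.
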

\begin{proof}
    We split the event $\{|X_s-X_t|\geq a\}$ as follows
    \begin{align*}
        \{\sup_{t \in [s,s^\prime]}|X_s-X_t|\geq a\}\subset\left\{\sup_{t \in [s,s^\prime]}\left|\int_s^t \mu\,du\right|\geq a/2\right\}\cup\left\{\sup_{t \in [s,s^\prime]}\left|\int_s^t \sigma\,dW_u\right|\geq a/2\right\}.
    \end{align*}
    Note that, if $a/2\le |\mu|\Delta$, then for any $c>0$, it holds that
    \begin{align*}
        \pr_x\left(\sup_{t\in[s,s^\prime]}|X_s-X_t|\geq a\right) &\leq 1
        \leq 2d\exp(c4|\mu|^2\Delta-ca^2/\Delta).
    \end{align*}
    Otherwise, for $C=\frac{\Sigma_-}{8d^2}$, we have
    \begin{align}
        &\pr_x\left(\sup_{t\in[s,s^\prime]}|X_s-X_t|\geq a\right) = \pr_x\left(\sup_{t\in[s,s^\prime]}\left|\int_s^t \sigma\,dW_u\right|\geq a/2\right)\notag\\
        &\quad\le \pr_x\left(\sup_{t\in[s,s^\prime]}|\sigma\,(W_t-W_s)|\geq a/2\right)\le \pr_x\left(\sup_{t\in[s,s^\prime]}|W_t-W_s|\geq \frac{a}{2\|\sigma\|_2}\right)\label{1423-a}\\
        &\quad\le 2d\pr_x\left(\sup_{t\in[0,\Delta]}W^1_t\geq \frac{a\sqrt{\Sigma_-}}{2d}\right)
        \leq 2d\exp(-C a^2/\Delta).\label{1423-b}
    \end{align}
    Here, in the line \eqref{1423-a} we used $|\sigma x|\le \|\sigma\|_2|x|$, where $\|\sigma\|_2$ denotes the $L^2$ norm of a matrix (i.e. the spectral norm) and for which in this case it holds $\|\sigma\|_2=\sqrt{1/\Sigma_-}$, with $\Sigma_-$ being the smallest eigenvalue of $\Sigma^{-1}$. Further, in the line \eqref{1423-b}, $W^1$ stands for the standard 1-dimensional Brownain motion, and the last inequality comes from the classical
    Bernstein's inequality, see \cite[p. 153-154]{revuz1999}.
\end{proof}

\begin{lemma}\label{lemma 5.1}
    Under \ref{assumption_domain}, there is a positive constant $C=C(d,D,\Sigma,\mu)$ such that for all $(u,z)\in(0,h]\times\overbar{D}$
    \begin{align*}
        \pr_z(T_D<u)\leq C\,\pr_z(Y_u\notin D).
    \end{align*}
\end{lemma}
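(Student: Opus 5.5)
The plan is to compare both probabilities with the distance of $z$ to the boundary at the parabolic scale $\sqrt u$, and to split according to whether $z$ is far from or close to $\partial D$. Since $Y$ coincides with $X$ (the interpolation \eqref{interpolated} is exactly Brownian motion with drift), both sides concern the same process. For the upper bound on the left-hand side, I would use Lemma~\ref{lemma 4.1}: the event $\{T_D<u\}$ forces $\sup_{t\le u}|X_t-z|\ge \dist(z,\partial D)$. Hence
\[
\pr_z(T_D<u)\le 2d\,e^{4C|\mu|^2h}\exp\bigl(-C\,\dist(z,\partial D)^2/u\bigr).
\]
It therefore suffices to bound $\pr_z(Y_u\notin D)$ from below by $c\exp(-c'\dist(z,\partial D)^2/u)$ with $c'\le C$. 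If this is not possible with the right constants, I would instead compare both quantities through a common function of $\delta:=\dist(z,\partial D)/\sqrt u$.

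\emph{Case $\delta\le \delta_0$ (near the boundary).} Here I would use the exterior cone condition, which follows from \ref{assumption_domain} (indeed from the uniform exterior ball condition of $C^2$ domains). Let $z_0\in\partial D$ be a nearest point to $z$. There is a truncated cone $\mathcal K$ with vertex $z_0$, fixed aperture and height $R$, contained in $\overline D^c$. The Gaussian density \eqref{transition} at time $u\le h<1$ is bounded below, uniformly in the drift up to the factor $e^{-|\mu|^2 h\Sigma_+}$ (as in Lemma~\ref{lemma bounds transition density}), by $c\,u^{-d/2}e^{-c'|y-z|^2/u}$. Integrating this over the portion of $\mathcal K$ within distance $\sqrt u$ of $z_0$, which has volume $\gtrsim u^{d/2}$ as long as $\sqrt u\le R$, gives $\pr_z(Y_u\notin D)\ge c(\delta_0)>0$. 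Since $\pr_z(T_D<u)\le1$, the claim follows in this case. If $h$ is not small relative to $R^2$, I would first reduce to $u\le u_0$ by noting that for $u\in[u_0,h]$ both probabilities are comparable to constants. The lower bound there holds uniformly in $z\in\overline D$ by compactness and positivity of the Gaussian density on the exterior of the bounded set $D$.

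\emph{Case $\delta>\delta_0$ (far from the boundary).} This is the main obstacle, since the two tails must be matched with compatible Gaussian constants. I would try the reflection-type argument instead of the crude Bernstein bound. Apply the strong Markov property at $T_D$:
\[
\pr_z(Y_u\notin D)\ge \ex_z\bigl[\1_{T_D<u}\,\pr_{X_{T_D}}(X_{u-T_D}\notin D)\bigr].
\]
By the cone argument above, started from a boundary point $X_{T_D}\in\partial D$, the inner probability is bounded below by a constant $c>0$ uniformly in $u-T_D\in(0,h]$. This holds because for any $s\le h$ a fixed cone at the boundary point captures a fixed fraction of the Gaussian mass at scale $\sqrt s$, up to the drift correction $e^{-c|\mu|\sqrt s}$, which is harmless for $s<1$. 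This yields $\pr_z(T_D<u)\le c^{-1}\pr_z(Y_u\notin D)$ directly, for all $z$. In fact it makes the case split unnecessary, so I would adopt it as the main proof and keep the distance estimates only as a check. The delicate point is the uniformity of the cone lower bound over arbitrarily small times $s$ and all boundary points. I would handle it via the uniform exterior cone from Property~\ref{property 3.1} and scaling invariance of cones, with the cone truncated at height $R$ contributing a negligible Gaussian tail for $s\le h<1$.
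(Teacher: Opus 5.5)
Your final argument is correct and is essentially the paper's proof. You apply the strong Markov property at $T_D$ and then bound $\pr_w(X_s\notin D)$ from below uniformly over $w\in\partial D$ and $s\in(0,h]$, using the uniform exterior cone condition and a Gaussian density lower bound with $s\le h<1$; the paper establishes $\pr(Y_u\notin D\mid\mathcal F_{T_D})\ge 1/C$ on $\{T_D<u\}$ in exactly this way, and, as you noticed, the preliminary case split via Lemma~\ref{lemma 4.1} is unnecessary.
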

\begin{proof}
    Since
    \begin{align*}
        \pr_z(T_D<u) = \ex_z[\1_{T_D<u}\pr_z(Y_u\notin D|\mathcal{F}_{T_D})] + \ex_z[\1_{T_D<u}\pr_z(Y_u\in D|\mathcal{F}_{T_D})],
    \end{align*}
   it is enough to show that, for a positive constant $C$ independent of $\mathcal{F}_{T_D}$ and $u$, we have
    \begin{align}\label{ineq}
        \pr(Y_u\notin D|\mathcal{F}_{T_D}) \geq 1/C,\quad \text{on $\{T_D<u\}$}.
    \end{align}
    Indeed, this would imply that
    \begin{align*}
        \pr_z(Y_u\in D|\mathcal{F}_{T_D}) = \pr_z(Y_u\notin D|\mathcal{F}_{T_D}) \frac{(1-\pr_z(Y_u\notin D|\mathcal{F}_{T_D}))}{\pr_z(Y_u\notin D|\mathcal{F}_{T_D})}\leq C\,\pr_z(Y_u\notin D|\mathcal{F}_{T_D}).
    \end{align*}
    
    The strategy for obtaining \eqref{ineq} is the same as the one used in \cite[Lemma 5.1]{Gobet2000}, which, in turn, is based on \cite[page 250]{karatzas}. Since the domain $D$ is of class $C^{3+\alpha}$ and its boundary is compact, $D$ satisfies both the uniform exterior sphere condition and Zaremba's (exterior) cone condition. In particular, there exists an angle $\theta>0$ and a radius $R_\theta>0$ (both dependend only on $D$) such that for all $s\in \partial D$ and the cone $K(s,-n(s),\theta)\coloneqq\{y\in\R^d:\,-(y-s)\cdot n(s)\geq |y-s|\cos{\theta}\}$ (i.e. the cone centered at $s$, in the direction of the normal derivative $-n(s)$ and the angle $\theta$) it holds that $K(s,-n(s),\theta)\cap B(s,R_\theta)\subset D^c$. Therefore,
    \begin{align*}
        \pr_z(Y_u\notin D|\mathcal{F}_{T_D}) \geq \pr_z(Y_u\in K\cap B|\mathcal{F}_{T_D}) = \int_{K\cap B} P^\mu_{u-T_D}(Y_{T_D},y)dy,
    \end{align*}
    where $K\cap B = K(Y_{T_D},-n(Y_{T_D}),\theta)\cap B(Y_{T_D},R_{\theta})$. The integrand above can be further simplified by Lemma \ref{lemma bounds transition density}, so we have
    \begin{align}
        \pr_z(Y_u\notin D|\mathcal{F}_{T_D}) \geq c(d,D,\Sigma,\mu)\int_{K\cap B} P^0_{u-T_D}(Y_{T_D},y)dy.
    \end{align}

    Using the change of variables $\sqrt{u-T_D}z=y-Y_{T_D}$, the domain $K\cap B$ becomes $\{z\in \R^d:\|z\|\leq R_{\theta}/\sqrt{u-T_D},\,z\cdot(-n)\geq\|z\|\cos \theta\}$. So
    \begin{align*}
        \pr_x(Y_u\notin D|\mathcal{F}_{T_D}) &\geq c(d,\Sigma,\mu)\int_{B(0,R_{\theta}/\sqrt{u-T_D})} \1_{z\cdot(-n(Y_{T_D}))\geq\|z\|\cos \theta} \exp\left\{-\frac{\Sigma_+}{2}\|z\|^2\right\} dz,
    \end{align*}
    where $\Sigma_+$ comes from \eqref{eq: parabolic}. Using polar coordinates and $u-T_D\leq h\leq 1$, we obtain
    \begin{align*}  
        \pr_x(Y_u\notin D|\mathcal{F}_{T_D}) \geq C(d,D,\Sigma,\mu),\quad \text{on $\{T_D<u\}$}.
    \end{align*}
\end{proof}

\begin{lemma}\label{lemma: L_bound}
    Let $S$ be a subordinator with the Laplace exponent~\eqref{eq:bernstein}. Then for all $k\in\N$ it holds that
    \begin{align*}
        {\ex L_t^k \leq e\Gamma(1+k)/\phi^k(1/t),\quad t>0.}
    \end{align*}
\end{lemma}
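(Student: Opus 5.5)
The plan is the classical exponential Chebyshev argument. We bound the tail of $L_t$ through the Laplace transform of $S$, and then integrate the tail to control the moments. Since $S$ is strictly increasing, $\{L_t>s\}=\{S_s\le t\}$ up to a null set, at least for $s>0$. This uses $L_t=\inf\{r:S_r>t\}$ and right-continuity: indeed $L_t>s$ implies $S_s\le t$, and $S_s<t$ implies $L_t\ge s$. For any $\lambda>0$, Markov's inequality applied to $e^{-\lambda S_s}$ then gives
\begin{align*}
  \pr(L_t>s)\le \pr(S_s\le t)=\pr\bigl(e^{-\lambda S_s}\ge e^{-\lambda t}\bigr)\le e^{\lambda t}\,\ex e^{-\lambda S_s}=e^{\lambda t-s\phi(\lambda)}.
\end{align*}

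Next we choose $\lambda=1/t$. This yields the clean tail bound $\pr(L_t>s)\le e\,e^{-s\phi(1/t)}$ for all $s\ge0$, and the factor $e$ in the statement comes precisely from this step.

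Then we use the layer-cake formula $\ex L_t^k=\int_0^\infty k s^{k-1}\pr(L_t>s)\,ds$. Inserting the tail bound gives
\begin{align*}
  \ex L_t^k\le e\int_0^\infty k s^{k-1}e^{-s\phi(1/t)}\,ds=e\,\frac{k\,\Gamma(k)}{\phi(1/t)^k}=\frac{e\,\Gamma(1+k)}{\phi^k(1/t)},
\end{align*}
which is the claim. Note that $\phi(1/t)>0$ because $S$ is not identically zero.

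There is no serious obstacle here. The only point needing care is the event inclusion $\{L_t>s\}\subseteq\{S_s\le t\}$, which follows directly from the definition of $L_t$ as an infimum together with the monotonicity of $S$; strict increase is not even needed for this direction. Everything else is a Gamma integral.
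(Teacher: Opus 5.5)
Your proof is correct and follows essentially the same route as the paper: the layer-cake formula, the exponential Chebyshev bound $\pr(S_s\le t)\le e^{ct}\,\ex e^{-cS_s}=e^{ct-s\phi(c)}$ with $c=1/t$, and a Gamma integral. The one difference is that you use the inclusion $\{L_t>s\}\subseteq\{S_s\le t\}$, which is all that is needed, where the paper asserts the equality of these events; if anything, your version is slightly more careful.
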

\begin{proof}
    It is easy to see that since $\mathds{1}_{\{S_s\leq t\}}
\leq
e^{c(t-S_s)}$ for any $c>0$, we obtain 
    \begin{align*}
        \ex L_t^k &= \int_0^\infty ks^{k-1} \pr(L_t>s)ds = \int_0^\infty ks^{k-1} \pr(S_s \leq t)ds\\
        &\leq \int_0^\infty ks^{k-1} e^{ct}\ex e^{-cS_s} ds = e^{ct}k\int_0^\infty s^{k-1} e^{-s\phi(c)} ds.
    \end{align*}
    The change of variables $y=s\phi(c)$ yields
    \begin{align*}
        \int_0^\infty s^{k-1} e^{-s\phi(c)} ds = \frac{1}{\phi^k(c)} \int_0^\infty y^{k-1} e^{-y} dy = \frac{\Gamma(k)}{\phi^k(c)}.
    \end{align*}
    The claim follows by using the identity $x\Gamma(x)=\Gamma(1+x)$ and choosing $c=1/t$.
\end{proof}

\section{Auxiliary results for the numerical examples in Section \ref{s:examples}}
\subsection{Coefficients in Example \ref{ex:1}}\label{ap:B}
For the computer implementation of the series expansion \eqref{Ex.Sol.} it is needed to calculate
\begin{equation}\label{B1}
    c_n = \frac{2}{R^2}\frac{1}{J_1^2(j_{0,n})} \int_0^R yf(y)J_0(\lambda_n y)\,dy,\quad\lambda_n=j_{0,n}/R.
\end{equation}
Here we do it for $f(y)=(1-y^2/R^2)^m$, where $m>0$. Recall the definition of the Bessel functions $J$ of order $\alpha$
\begin{align}
    J_\alpha(x) = \sum_{k=0}^\infty \frac{(-1)^k}{k! \Gamma(k+\alpha+1)} \left(\frac{x}{2}\right)^{2k+\alpha}.
\end{align}
The integral \eqref{B1} becomes
\begin{equation}
    I = \int_0^R yf(y) J_0(\lambda_n y)\,dy = \sum_{k=0}^\infty \frac{(-1)^k}{(k!)^2} (\lambda_n /2)^{2k} \int_0^R y^{2k+1}(1-y^2/R^2)^m dy.
\end{equation}
By using the change of variable $t=y^2/R^2$, we have
\begin{equation}
    \begin{aligned}
        I^\prime &= \int_0^R y^{2k+1}(1-y^2/R^2)^m dy = \frac{R^{2k+2}}{2} \int_0^1 t^k (1-t)^m dt\\
        &= \frac{R^{2k+2}}{2} B(k+1,m+1) = \frac{R^{2k+2}}{2} \frac{\Gamma(k+1)\Gamma(m+1)}{\Gamma(k+1+m+1)}.
    \end{aligned} 
\end{equation}

Then,
\begin{equation}
    \begin{aligned}
        I &= \sum_{k=0}^\infty \frac{(-1)^k}{(k!)^2} (\lambda_n /2)^{2k} \frac{R^{2k+2}}{2} \frac{\Gamma(k+1)\Gamma(m+1)}{\Gamma(k+1+m+1)}\\
        &= m!\frac{R^2}{2} \sum_{k=0}^\infty \frac{(-1)^k}{k!\Gamma(k+m+1+1)} \left(\frac{\lambda_n R}{2}\right)^{2k}\\
        &= m!\frac{R^2}{2} \left(\frac{2}{\lambda_n R}\right)^{m+1} J_{m+1}(\lambda_n R),
    \end{aligned}
\end{equation}
and finally,
\begin{equation}
    c_n = \frac{m!}{J_1^2(j_{0,n})} \left(\frac{2}{j_{0,n}}\right)^{m+1} J_{m+1}(j_{0,n}).
\end{equation}

\subsection{Obtaining the eigenfunction $f_d$ in Example \ref{ex:high-dimensional-shell}}\label{ap:b2}

As in Example \ref{ex:high-dimensional-shell}, let $d\geq2$, put $\nu=d/2-1$, fix $\rho\in(0,1)$ and $\kappa>0$, and define
\begin{equation*}
	A_d\coloneqq\operatorname{diag}(a_1,\ldots,a_d),
	\qquad
	a_i\coloneqq
	\begin{cases}
		1, & i \text{ odd},\\
		1/2, & i \text{ even}.
	\end{cases}
\end{equation*}
Recall that $\mathcal Gg(x)=\sum_{i,j=1}^d(A_dA_d^T)_{ij}\partial^2_{x_ix_j}g(x)$ is the generator of interest.

Consider first the radial Dirichlet eigenvalue problem for the Laplacian
on the normalized spherical shell
\[
\mathcal A_\rho
\coloneqq \{z\in\R^d:\rho<|z|<1\},
\qquad 0<\rho<1.
\]
Writing a radial eigenfunction as $\phi(z)=R(|z|)$, the eigenpair problem 
$\Delta\phi=-q^2\phi$ becomes
\begin{equation}\label{eq:radial-SL}
	\big(r^{d-1}R'(r)\big)'
	=-q^2r^{d-1}R(r),
	\qquad
	R(\rho)=R(1)=0.
\end{equation}
Since $\rho>0$, this is a regular Sturm--Liouville problem, and
by the spectral theorem for regular Sturm--Liouville problems its eigenvalues are simple, positive, and may be ordered as a sequence tending
to $+\infty$, that is
\[
0<q_{\nu,1}^2<q_{\nu,2}^2<\cdots\nearrow\infty.
\]
For details, see, e.g., \cite[Section 5.4]{Teschl2012}.
Dividing \eqref{eq:radial-SL} by $r^{d-1}$, setting
\(R(r)=r^{-\nu}v(qr),
\)
and writing $s=qr$, a direct calculation yields
\[
s^2v''(s)+sv'(s)+(s^2-\nu^2)v(s)=0.
\]
This is Bessel's equation of order $\nu$ and therefore its solutions on $(\rho,1)$ are linear combinations of the Bessel functions of the first kind
$J_\nu(qr)$ and of the second kind $Y_\nu(qr)$, see
\cite[Section 10.2]{olver2010nist}.
Therefore, for a fixed $q>0$, a non-trivial solution satisfying the inner Dirichlet condition
$R(\rho)=0$ is, up to a multiplicative constant,
\begin{align}
	R_q(r)
	=
	r^{-\nu}
	\Big(
	Y_\nu(\rho q)J_\nu(qr)
	-
	J_\nu(\rho q)Y_\nu(qr)
	\Big).
	\label{2210}
\end{align}
Then, the outer Dirichlet condition $R_q(1)=0$
holds if and only if
\[
\mathcal F_\nu(q)
\coloneqq
Y_\nu(\rho q)J_\nu(q)
-
J_\nu(\rho q)Y_\nu(q)
=0.
\]
In other words,
\(
q^2 \text{ is an eigenvalue of \eqref{eq:radial-SL}}
\) if and only if \(
\mathcal F_\nu(q)=0
\), and those are, therefore, $0<q_{\nu,1}<q_{\nu,2}<\cdots$.

We are able now to precisely define the domain $D$ for the Dirichlet problem for $\mathcal G$. Set 
\begin{equation*}
	R_1\coloneqq\frac{q_{\nu,1}}{\kappa},
	\qquad R_0\coloneqq\rho R_1=\rho \frac{q_{\nu,1}}{\kappa},
	\qquad
	D_{A,d}\coloneqq
	\left\{x\in\R^d:\ R_0<|A_d^{-1}x|<R_1\right\}.
\end{equation*}
The domain $D_{A,d}$ is a smooth anisotropic shell with two boundary components, and the linear change of variables $z=A_d^{-1}x$ maps the Dirichlet problem for $\mathcal G$ in $D_{A,d}$ into the problem for the standard Laplacian in the spherical shell $R_0<|z|<R_1$. 

To construct the radial Dirichlet eigenfunction for $\mathcal G$ in $D_{A,d}$, define
\begin{equation}\label{eq:highdim-radial-modes}
	\Psi_1(s)
	\coloneqq R_{q_{\nu,1}} (s) =
	s^{-\nu}\left[
	Y_\nu(\rho q_{\nu,1})J_\nu(q_{\nu,1}s)
	-J_\nu(\rho q_{\nu,1})Y_\nu(q_{\nu,1}s)
	\right], \quad s\in [\rho,1],
\end{equation}
so that $\Psi_1$ satisfies \eqref{eq:radial-SL} with $q=q_{\nu,1}$. The corresponding eigenvalue for $\mathcal{G}$ in $D_{A,d}$ is then
\(
	\lambda_1=\left(\frac{q_{\nu,1}}{R_1}\right)^2
	=\kappa^2.
\)
Let $M_1\coloneqq\max_{\rho\leq s\leq1}|\Psi_1(s)|$, choose the sign $\epsilon_1\in\{-1,1\}$ so that $\epsilon_1\Psi_1$ is strictly positive in $(\rho,1)$, and let $s_\star$ be the point of maximum of $\Psi_1$. Put
\begin{equation}\label{def:f_d}
	f_d(x)\coloneqq
	\epsilon_1\frac{\Psi_1(|A_d^{-1}x|/R_1)}{M_1},\quad x\in D_{A,d},
	\qquad
	x_\star\coloneqq A_d(R_1s_\star e_1).
\end{equation}
By construction, $f_d$ is the first radial Dirichlet eigenfunction of
$\mathcal{G}$, normalized so that its maximum equals one. More
precisely,
\[
\mathcal{G}f_d=-\kappa^2 f_d \quad \text{in }D_{A,d},
\quad
f_d=0 \quad \text{on }\partial D_{A,d},
\]
and thus $\mathcal{G}f_d=0$ on
$\partial D_{A,d}$. Moreover, $f_d(x_\star)=1$.

The solution to $\partial_T^\alpha u_d(T,x)=\mathcal{G}u_d(T,x)$ is therefore
\[
	u_d(T,x)
	=
	\mathds E_x
	\left[
	f_d(X_{L_T})
	\mathbf 1_{\{L_T<T_{D_{A,d}}\}}
	\right]=
	\mathds E
	\left[
	P_{L_T}^D f_d(x)
	\right]=
	f_d(x)\,
	\mathds E
	\left[
	e^{-\kappa^2 L_T}
	\right].
\]
Here, $(P_s^D)_{s\geq 0}$ denotes the killed semigroup associated with
$\mathcal{G}$ in $D_{A,d}$; equivalently, the semigroup of the
process $X$ killed upon exiting $D_{A,d}$. We also used the fact that $f_d$ is a Dirichlet
eigenfunction,  so $P^D_s f_d(x)=e^{-\kappa^2 s}f_d(x)$.

The remaining auxiliary task required by Example \ref{ex:high-dimensional-shell} is to compute the theoretical variance $\sigma^2(T,x_\star)$ needed for the application of Theorem  \ref{CLT}. That is, denoting
\[
Z_T
=
f_d(X_{L_T})
\1_{\{L_T<T_{D_{A,d}}\}},
\]
we need to compute $\sigma^2(T,x_\star)=\ex_{x_\star}Z_T^2-(\ex_{x_\star}Z_T)^2=\ex_{x_\star}Z_T^2-(u_d(T,x_\star))^2$.

Note
\[
\ex_{x_\star}[Z_T^2]
=\ex_{x_\star}\left[f_d(X_{L_T})^2
\1_{\{L_T<T_{D_{A,d}}\}}\right]=
\ex\!\left[
P_{L_T}^D(f_d^2)(x_\star)
\right].
\]
We employ the eigenfunction expansion of $P_{s}^D(f_d^2)$ to calculate the variance.

Since $f_d$ is radial in the transformed variable $A_d^{-1}x$, so is $f_d^2$.
Introduce the radial inner product
\(
\langle g,h\rangle_\nu
\coloneqq
\int_\rho^1 g(s)h(s)s^{d-1}\,ds,
\)
and let
\(
\eta_n
\coloneqq
\frac{\Psi_n}
{\sqrt{\langle\Psi_n,\Psi_n\rangle_\nu}}
\)
be the normalized radial Dirichlet eigenfunctions, where $\Psi_n$ are given by the same expression as in \eqref{eq:highdim-radial-modes} with $q_{\nu,n}$ replacing $q_{\nu,1}$. The corresponding eigenvalues are $\lambda_n=\left(\frac{q_{\nu,n}}{R_1}\right)^2$. Writing
\[
\bar f_d(s)
=
\frac{\epsilon_1\Psi_1(s)}{M_1},
\qquad
\bar f_d(s)^2
=
\sum_{n=1}^\infty b_n\eta_n(s),
\qquad
b_n
=
\langle \bar f_d^2,\eta_n\rangle_\nu,
\]
we have
\begin{equation}\label{eq:highdim-theoretical-variance}
	\ex_{x_\star}[Z_T^2]
	=
	\sum_{n=1}^\infty
	b_n
	E_{\alpha}(-\lambda_n{T}^\alpha)
	\eta_n(s_\star).
\end{equation}
It is important to note that every quantity on the right-hand side is determined by the radial eigenpairs and one-dimensional integrals of known functions. In particular, no sample variance is required, and we can numerically evaluate \eqref{eq:highdim-theoretical-variance} by truncating the infinite series.
Let $\sigma_d^{(K)}(T,x_\star)$ denote the
corresponding approximation obtained by cutting after the first $K$ modes. We use $K=80$ in the computations and as a numerical check,
we recompute the standard deviation with $K=160$ at the $16$ equally
spaced time points
\(
T_j\in[0.0025,0.25]
\)
used in Subfigure~\ref{fig:highdim-ci}. We obtain
\begin{align}\label{variance}
	\max_{1\le j\le16}
	\left|
	\sigma_d^{(160)}(T_j,x_\star)
	-
	\sigma_d^{(80)}(T_j,x_\star)
	\right|
	\approx 8.15\times10^{-9}.
\end{align}
The implemented Python code for these calculations can be found in \cite{Python}.

	\bibliographystyle{abbrv}
	\bibliography{References}

		\bigskip
	
\noindent	{\bf Ivan Bio\v{c}i\'c}
	
\noindent	Department of Mathematics, Faculty of Science, University of Zagreb, Zagreb, Croatia,
	
\noindent	Department of Mathematics “Giuseppe Peano”, University of Turin, Turin, Italy,
	
\noindent	Email: \texttt{ivan.biocic@unito.it}, \texttt{ivan.biocic@math.hr}
	
	\bigskip
	
\noindent	{\bf Daniel E. Cedeño-Girón}
	
\noindent	Department of Mathematics “Giuseppe Peano”, University of Turin, Turin, Italy,
	
\noindent	Email: \texttt{danieleduardo.cedenogiron@unito.it}

	\bigskip
	
\noindent	{\bf Aleksandar Mijatovi\'{c}}
	
\noindent	Department of Statistics, University of Warwick, UK,
	
\noindent	Email: \texttt{a.mijatovic@warwick.ac.uk}
	
	\bigskip
\noindent	{\bf Bruno Toaldo}
	
\noindent	Department of Mathematics “Giuseppe Peano”, University of Turin, Turin, Italy,
	
\noindent	Email: \texttt{bruno.toaldo@unito.it}

\end{document}